\documentclass[12pt,reqno]{amsart}
\usepackage{amsmath, amssymb, esint}
\usepackage[margin=1in]{geometry}
\usepackage{amsthm}
\usepackage{fullpage,amsbsy,url,enumerate,comment,colonequals}
\usepackage{parskip}

\usepackage{bbm}

\setlength\parindent{0.5cm}

\usepackage[pdftex]{graphicx,color}
\pdfoutput=1

\addtolength{\oddsidemargin}{-.0in}
\addtolength{\evensidemargin}{-.0in} \addtolength{\textwidth}{.0in}

\newtheorem{definition}{Definition}[section]
\newtheorem{theorem}{Theorem}[section]
\newtheorem{prop}[theorem]{Proposition}
\newtheorem{lemma}[theorem]{Lemma}


\newtheorem{conjecture}[theorem]{Conjecture}

\newtheorem{remark}[theorem]{Remark}
\newtheorem{question}[theorem]{Question}

\def\R{\mathbb{R}}
\def\C{\mathbb{C}}
\def\Z{\mathbb{Z}}

\def\e{\epsilon}
\def\G{\Gamma}
\def\D{\Delta}
\def\DD{\mathbb{D}}
\def\O{\Omega}

\def\a{\alpha}
\def\b{\beta}
\def\g{\gamma}
\def\d{\delta}

\def\th{\theta}
\def\de{\partial}

\def\k{\kappa}
\def\z{\zeta}
\def\disp{\displaystyle}

\def\noi{\noindent}

\def\Re{\mbox{Re} \,}
\def\Im{\mbox{Im} \,}
\newcommand{\chrc}[1]{\mathrm{1}_{#1}}

\def\Xint#1{\mathchoice
{\XXint\displaystyle\textstyle{#1}}%
{\XXint\textstyle\scriptstyle{#1}}%
{\XXint\scriptstyle\scriptscriptstyle{#1}}%
{\XXint\scriptscriptstyle\scriptscriptstyle{#1}}%
\!\int}
\def\XXint#1#2#3{{\setbox0=\hbox{$#1{#2#3}{\int}$ }
\vcenter{\hbox{$#2#3$ }}\kern-.6\wd0}}

\def\dashint{\Xint-}

\def\F{\Phi}

\def\B{\mathbb{B}}

\def\HH{\mathcal{H}} 
\def\CC{\mathcal{C}} 

\def\S{\mathcal{S}}
\def\Dee{\mathcal{D}}

\newcommand{\eps}{\epsilon}


\title{Free Boundaries Subject to Topological Constraints}
\date{}
\author{David Jerison and Nikola Kamburov}
\address{David Jerison, Department of Mathematics,  Room 2-272,
Massachusetts Institute of Technology, Cambridge, MA 02139, USA}
\email{jerison@math.mit.edu}
\address{Nikola Kamburov, Facultad de Matem\'aticas, Pontificia Universidad Cat\'olica de Chile, Avenida Vicu\~na Mackenna 4860, Santiago 7820436, Chile}
\email{nikamburov@mat.uc.cl}

\thanks{Supported in part by NSF Grant DMS 1500771, a Simons Fellowship, and Simons Foundation grant 
(601948, DJ). NK was partially supported by Proyecto FONDECYT Iniciaci\'on No. 11160981}

\subjclass[2010]{35R35, 35B08, 35D40}
\keywords{free boundaries, elliptic variational problems}

\numberwithin{equation}{section}

\begin{document}

\begin{abstract}  We discuss the extent to which solutions to one-phase
free boundary problems can be characterized according to
their topological complexity.    Our questions are motivated by  fundamental
work of Luis Caffarelli on free boundaries and by striking results
of T. Colding and W. Minicozzi concerning finitely connected, embedded, minimal surfaces.
We review our earlier work on the simplest case,
one-phase free boundaries in the plane in which
the positive phase is simply connected.  We also prove a new, purely topological,
effective removable singularities theorem for free boundaries.  At the same time, we 
formulate some open problems concerning the multiply connected case and 
make connections with the theory of minimal surfaces and semilinear variational problems.
\end{abstract}

\bibliographystyle{alpha}

\maketitle

\section{Introduction}  This paper is dedicated to Luis Caffarelli on his 70th birthday.
Over the course of more than four decades, Luis
 has shaped  the theory of elliptic nonlinear partial differential equations.  
 In particular, he has had a profound influence on both of us.  With this offering, we express our deep appreciation for his wisdom, guidance, generosity, and
friendship.

Consider a bounded, open set $\O\subset \R^n$.   In 1981, Alt and Caffarelli \cite{AC} showed that minimizers $u$ of the functional 
\begin{equation}\label{functional}
    J(v,\O) = \int_\O (|\nabla v|^2 + 1_{\{v>0\}}) ~dx,  \qquad
    v :\O\to[0,\infty).
\end{equation}
with fixed boundary conditions on $\de \O$ are Lipschitz continuous
in $\O$ and satisfy Euler-Lagrange (one-phase free boundary) equations
\begin{equation}\label{FBP}
 \Delta u   = 0  \ \text{in} \ \O^+(u), \quad     |\nabla u|  = 1 \ \text{on} \  F(u)
 \end{equation}
 in a suitable generalized sense, with positive phase $\O^+(u)$ 
 and free boundary $F(u)$ given by 
\[
\O^+(u) : = \{x\in \O: u(x)>0\}; \quad F(u) = \O \cap \de \O^+(u).
\]
In particular, they proved that the free boundary $F(u)$ is a smooth hypersurface
except on a relatively closed subset of $(n-1)$-Hausdorff measure
zero, $u$ is smooth up to $F(u)$ from the positive side in 
a neighborhood of every point where $F(u)$ is smooth,
and the equation $|\nabla u| = 1$ is satisfied at those points.
The paper \cite{AC} revealed a broad analogy with
the theory of minimal surfaces, which continues to guide further 
progress.

The focus of the present paper is  families of 
one-phase solutions (i.~e. non-negative solutions) to the Euler-Lagrange
equation of the Alt-Caffarelli functional that are not necessarily 
minimizers.    We made partial progress on the problem by characterizing
the interior behavior of such solutions in the 2-dimensional
case in which the positive phase is simply connected \cite{JK}.  
Here we explain those results somewhat more informally and prove a new, fully topological
version of one of them, an effective removable singularities theorem.
We also formulate several questions that are still open in dimension two,
especially concerning the multiply connected case and the case of solutions 
obtained by the Perron process.  We explain the analogy with questions about 
minimal surfaces and more general questions concerning solutions to 
semilinear  equations in dimensions two and three.

\begin{definition} \label{def:classical-soln}  A classical solution to the
one-phase free boundary problem in an open set  $\O\subset \R^n$ 
is a continuous, non-negative function $u$ defined on $\O$ 
such that \mbox{$u\in C^\infty(\bar \O^+(u))$,} 
the free boundary $F(u)$ is a $C^\infty$ hypersurface with $u=0$
on one side and $u>0$ on the other, and $u$ satisfies \eqref{FBP} 
with the gradient $\nabla u$ evaluated from the positive side. 
\end{definition}

Let $B_r(x)$ denote the open ball centered at $x$ of radius $r$ in $\R^n$,
and set $B_r = B_r(0)$, the ball centered at the origin.  
Let $\DD$ denote the unit disk centered at the origin in $\R^2$. 
Consider a classical solution $u$ to the one-phase free boundary problem
in $\DD$.     The main question we will discuss is what 
$\DD^+(u)$ and $F(u)$ look like within the smaller disk $B_{1/2}$.

One of the most important methods used to study free boundary problems
is to take rescaled limits, both blow-up and blow-down solutions,
an approach closely
associated with Luis Caffarelli in many contexts.
In \cite{ACF, CafI, CafII} the authors developed a systematic
theory of two-phase free boundary problems, that is, the analogous
variational problem with solutions that are allowed to change sign.  
In the last of his series of articles
on the Harnack inequality approach \cite{CafIII}, Caffarelli found
a different way to construct solutions, namely by taking the infimum
of strict supersolutions.  At the same time, he developed the
appropriate notion of weak solution that is closed under uniform
limits.  These solutions are now called viscosity solutions.
As valuable as they are for the study of minimizers of the functional,
they are even more essential in the study of one-phase non-minimizers.

The method of rescaling often reduces questions about more
general solutions to the characterization of entire solutions.   In the plane,
the obvious solutions are the single (half-) plane solution $P$ and 
two-plane solutions $TP_a$, $a>0$, defined as functions of $z = x_1 + ix_2$ by 
\begin{equation} \label{eq:planesolns}
P(z) = x_1^+; \quad TP_a(z) := x_1^+ + (-x_1 - a)^+, \qquad a>0.
\end{equation}
and rigid motions of these solutions.   Note that under rescaling by the
factor $a>0$, we have $P(z) =  a \, P(z/a)$, $TP_a= a \, TP_1(z/a)$, and
the two half-planes of support of $TP_a$ are separated by distance $a$. 
In higher dimensions we can expect some kind of zoo of entire solutions
starting with the conic, rotationally symmetric solution in \cite{AC}. 
For that reason, we will confine ourselves for most of this paper to the plane.

The
new development in the theory of free boundaries that allowed  us
to make progress in the two-dimensional case is the discovery
by Hauswirth, H\'elein and Pacard \cite{HHP} of another family of entire solutions
constructed via conformal mapping as follows.  Set 
\[
\varphi (\z) = \z + \sinh \z.
\]
When restricted to the strip $\mathcal{S}=\{\zeta \in \C: |\Im \zeta| < \pi/2 \}$, $\varphi$ is biholomorphic onto its image 
$$
\Omega_1 := \varphi (\mathcal{S}) = \{z = x_1 + i x_2 \in \C: |x_2 |< \pi/2 + \cosh (x_1) \}
$$ 
and the function given by
\[
 H(z) :=  \left\{ \begin{array}{ll} \Re (\cosh(\varphi^{-1}(z))) & z\in \Omega_1 \\ 0 & \text{otherwise}  \end{array} \right.
\]
solves \eqref{FBP}.   We obtain a family of solutions by taking dilates $H_a(z) = a H_1(z/a)$, $a>0$.
The positive phase region $\O_a = \{z\in \C: H_a(z)>0\} = a\O$ is the region between two catenaries
separated by distance $(2+\pi)a$ as shown in Figure \ref{hairpins}.   We call these {\em double hairpin} 
solutions because the catenaries resemble hairpin curves.  The 
family of double hairpin solutions is completed by taking rigid motions of the regions $\O_a$ and corresponding
functions $H_a$. 
\begin{figure}[h]
\centering \includegraphics[scale=0.6]{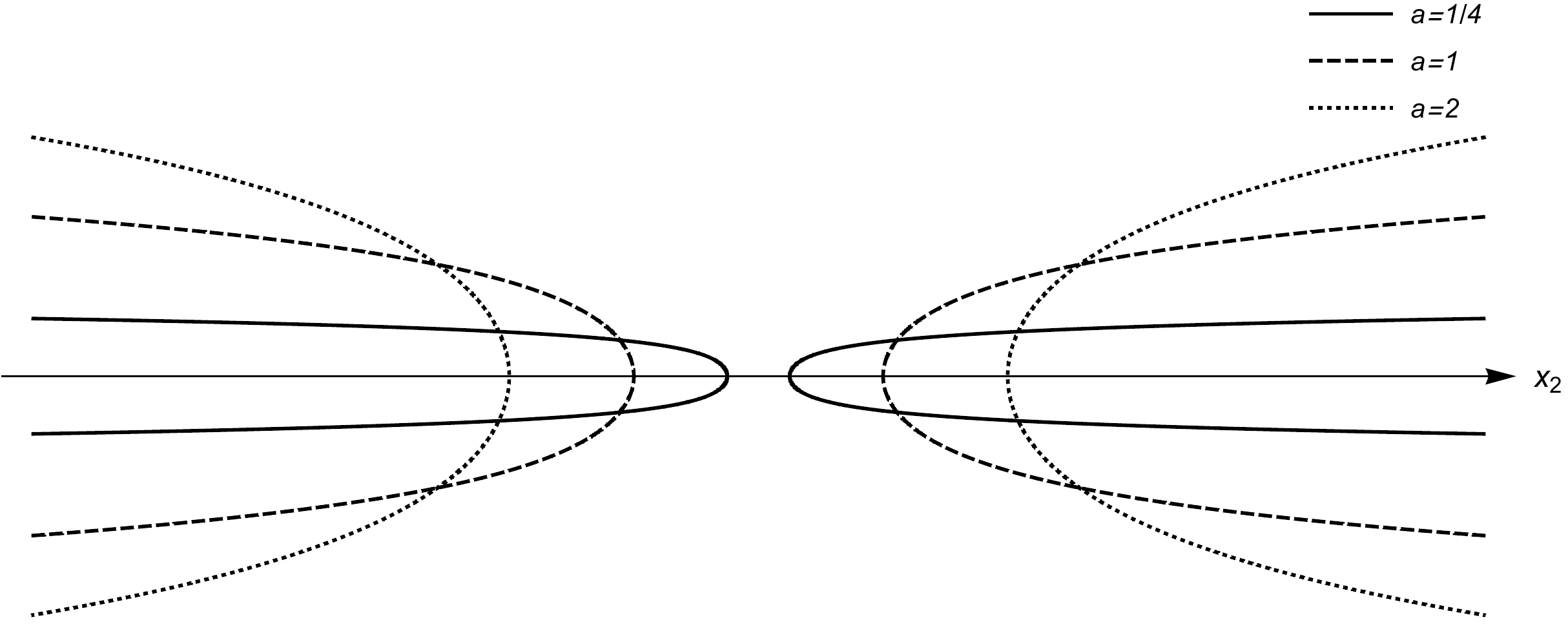} \caption{Mathematica plot of the free boundary of the double hairpin solution $H_a(z)$ for $a=1/4$, $a=1$ and  $a=2$. Note that $z=x_1 + i x_2$ and $x_2$ is the horizontal axis in the diagram.}
\label{hairpins}
\end{figure}\noindent

The double hairpin solutions, along with the one and two-plane solutions
are the only classical, entire, simply connected solutions in the plane.  Depending
on the definition of a solution, this result was proved by Khavinson, Lundberg and Teodorescu \cite{KhavLundTeo} and by  Traizet \cite{Traizet}.   For our work we require a different 
version of this characterization of entire solutions,  namely those
that arise as uniform limits of classical solutions (Theorem \ref{thm:mainglobal}).

The consequence in \cite{JK} of our characterization of entire solutions
is that every classical solution
in $\DD$ with {\em simply connected} positive phase is nearly isometric on
sufficiently small interior disks to one of these three entire solutions.   
Suppose that $u$ is a classical solution in $\DD$, $\DD^+(u)$ is simply connected, 
$z\in F(u)$, $|z| \le 1/2$, and $r>0$ is less than some absolute constant.  Then
there are never more than two strands of $F(u)$ in $B_r(z)$, and there are,
roughly speaking, three cases (see Figure \ref{FB_3cases}):

\begin{figure}[h]
\centering \scalebox{.7}{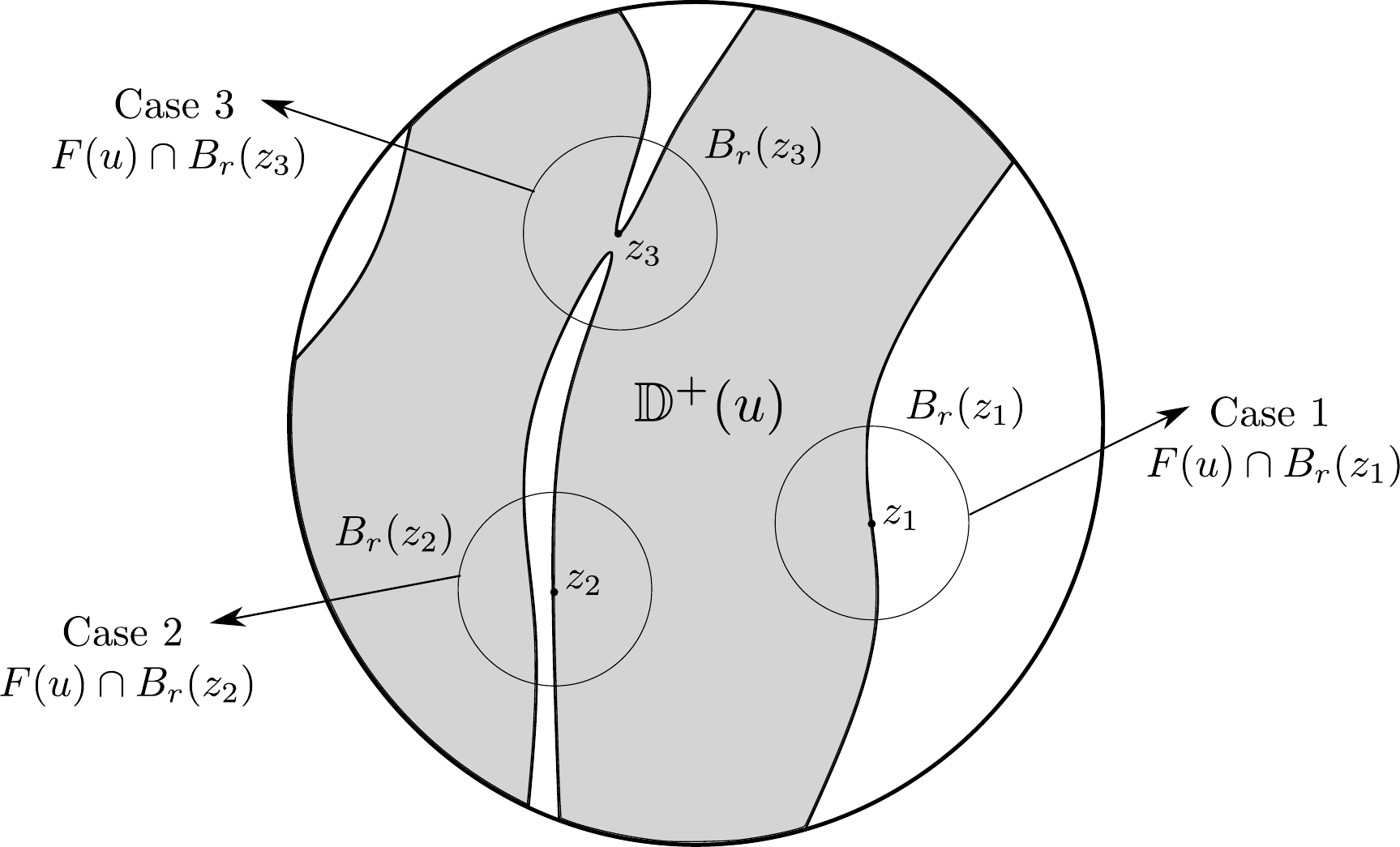} \caption{An illustration of the three cases for the free boundary $F(u)\cap B_r(z)$ of a solution $u$ having simply connected positive phase $\DD^+(u)$.}
\label{FB_3cases}
\end{figure}

\noi
{\em Case 1.}   $F(u)\cap B_r(z)$ 
has one strand (one connected component),
and $\DD^+(u) \cap B_r(z)$ is connected. 
Moreover, the curvature of that strand is bounded by an absolute constant,  
and the free boundary and positive
phase resemble those of a one-plane solution. 

\noi
{\em Case 2.}  $F(u)\cap B_r(z)$ 
has two strands, and $\DD^+(u)\cap B_r(z)$ 
has two components.   The curvature of each strand is bounded by an absolute
constant, and  the free boundary and positive phase resemble those of a two-plane solution
with zero phase in between the two strands.

\noi
{\em Case 3.} 
$F(u)\cap B_r(z)$ 
has two strands, and $B_r(z)\cap \DD^+(u)$ 
has only one component.  The free boundary and positive phase resemble
those of a double hairpin solution with positive phase in between the two strands.

In all three cases, the resemblance is expressed as an approximate
isometry with distances distorted by at most, say, one percent at all 
scales up to an absolute constant times unit scale.   
Precise statements are given in Lemma \ref{lem:flattypes} and Theorems \ref{thm:mainglobal} and \ref{thm:mainsimplyconn}.
In Case 3, we obtain more detailed estimates of the curvature of $F(u)$, 
but the resemblance of the curvature with that of the double hairpin is only
valid in $B_{\sqrt{a}}(z)$, not all the way to unit scale, as we will discuss next in connection with minimal surfaces. 


With Figures \ref{hairpins} and \ref{FB_3cases}  in mind, we can describe the analogy with minimal
surfaces.  We will do this in reverse order, starting with Case 3.  
Consider an embedded, minimal surface $M$ in the open
unit ball $\B\subset \R^3$ homeomorphic
to an annulus and such that $\de M$ consists of two disjoint, simple loops
in $\de \B$.  Suppose that there is a shortest geodesic loop in $M$ of 
length $a$.  The number $a$ is known as the {\em neck size}.
Colding and Minicozzi have proved that $M$ is very close
to a standard catenoid in the following sense. 

\begin{theorem}  (Colding and Minicozzi \cite[Theorem 1.1 and Remark 3.8]{CMAnnuli}) \label{thm:catenoid} 
Suppose that $M$ is  minimal, embedded annulus in the unit
ball $\B$ with $\de M \subset \de \B$ as above.   If the neck size $a$ of $M$ is less than some absolute constant,
and the shortest geodesic loop passes through $x$, $|x|\le 1/2$, then 
there is a diffeomorphism of $M\cap B_{\sqrt{a}}(x)$ to the standard catenoid 
with neck size $a$ that preserves distance and curvature to within one percent. 
\end{theorem}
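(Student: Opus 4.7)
The plan is to reduce Theorem~\ref{thm:catenoid} to a compactness-and-rigidity argument by rescaling. Introduce $\tilde M := a^{-1}(M - x)$, a minimal embedded annulus sitting in the ball $B_{1/a}(0)$ whose shortest geodesic loop passes through the origin and has length $1$. The task becomes: if $a$ is sufficiently small, then $\tilde M \cap B_{1/\sqrt{a}}(0)$ is $(1+\epsilon)$-bilipschitz close to a standard catenoid of neck size $1$, with curvatures matching to the same tolerance; rescaling by $a$ recovers the statement of the theorem on $B_{\sqrt{a}}(x)$.

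The analytic heart of the argument is a \emph{curvature bound at scale} $1/\sqrt{a}$: there exists an absolute constant $C$ such that $|A_{\tilde M}| \le C$ on $\tilde M \cap B_{1/\sqrt{a}}(0)$ once $a$ is small. I would deduce this from the Colding--Minicozzi structure theory for embedded minimal annuli, specifically the scale-invariant curvature and chord--arc estimates that rule out additional thin necks forming in $B_{1/\sqrt{a}}$: the hypothesis that the length-$1$ loop is the \emph{shortest} geodesic loop prevents any competing neck from appearing at a smaller scale nearby, and embeddedness together with the controlled annular topology prevents sheeting/folding. With curvature bounded, standard elliptic regularity for minimal graphs upgrades to uniform $C^{k,\alpha}$ control.

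Once the curvature estimate is in place, compactness and rigidity finish the job. For any sequence $a_j \to 0$, the rescaled surfaces $\tilde M_j$ subconverge smoothly on compact subsets of $\R^3$ to a complete, properly embedded minimal surface $\tilde M_\infty$. Since each $\tilde M_j$ is an annulus containing a length-$1$ loop through the origin that is preserved in the limit, $\tilde M_\infty$ is a complete, embedded minimal annulus. By Schoen's classical theorem, the only complete embedded minimal annulus in $\R^3$ with finite total curvature is the catenoid; the normalization pins the axis through the origin, leaving only a rotation about that axis, which the $1\%$ tolerance can absorb. A standard contradiction-via-compactness argument then promotes the qualitative smooth convergence into the quantitative $1\%$ bilipschitz and curvature closeness on $B_{1/\sqrt{a}}(0)$.

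The main obstacle is the curvature estimate at the scale $1/\sqrt{a}$. This scale is essentially sharp: a catenoid with neck size $a$ spreads logarithmically away from its neck, so one cannot extend catenoidal closeness out to scale $1$ (let alone $1/a$); on the other hand, the argument must propagate the ``single neck of length $a$'' information well beyond the immediate neighborhood of the neck itself. Capturing exactly how far the shortest-loop hypothesis propagates --- and quantitatively ruling out any secondary structure in between --- is the delicate content of the Colding--Minicozzi estimates. Everything else in the proof is a soft consequence of those estimates combined with the classification of embedded minimal annuli.
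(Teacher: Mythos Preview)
The paper does not give a proof of Theorem~\ref{thm:catenoid}. This result is quoted from Colding and Minicozzi \cite{CMAnnuli} purely as motivation and background for the free boundary analogues the paper does prove (Theorems~\ref{thm:mainsimplyconn} and~\ref{thm:removable}); no argument, not even a sketch, appears in the paper. So there is nothing to compare your proposal against.

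That said, a brief comment on the proposal itself. Your rescale--compactness--rigidity outline is the natural shape of such an argument, and you correctly identify the curvature estimate at scale $1/\sqrt{a}$ as the substantive content. But as written the proposal is essentially circular: you appeal to ``the Colding--Minicozzi structure theory for embedded minimal annuli'' to obtain the curvature bound, which is precisely the content of \cite{CMAnnuli} that you are trying to prove. A genuine proof would have to supply that estimate directly (via stability of a nearby surface, chord--arc bounds, or the one-sided curvature estimate), not invoke it. There is also a small gap in the limit step: Schoen's theorem classifies complete embedded minimal surfaces of \emph{finite total curvature} with two ends; you would need to argue separately (e.g.\ via Collin's theorem or a total curvature bound coming from your curvature estimate) that the limit annulus actually has finite total curvature before you can conclude it is a catenoid.
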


A minimal, embedded annulus in the ball in 3-space 
with boundary on the 2-sphere is analogous to a free boundary in the disk 
consisting of two strands as in Case 3 above, with ends on the boundary circle
and positive phase in between.  In our theorem, we have separated
the issue of approximation to first order (isometry) and to second order (curvature).
Our free boundary solution is approximately isometric to a pair
of catenaries up to unit scale. 

The minimal surface theorem does not address the
issue of approximate isometry up to unit scale because it is aimed
at rigidity of curvature rather than distance.   It only matches the approximate catenoid with the standard 
one up to scale $\sqrt{a}$.  Our free boundary second derivative (or curvature) bounds have a similar limitation.  
They are valid up to unit scale, but only yield a good approximation of curvature (within one percent, say)
up to scale $\sqrt{a}$, analogous to the minimal surface case. 
They are more delicate than our isometric
approximation because they require us to match our free boundary strands
to the catenaries with greater precision.   One
can regard the neck size of the annulus as analogous to the distance between 
the two strands.  In our proof we use a different normalization, namely 
the value $a$ of the solution $u$ at the (unique)
saddle point between the two strands.  This parameter $a$
identifies which double hairpin solution to choose, namely
the one with the same value at its saddle point, $a = H_a(0)$.

In analogy with Case 2, we consider a minimal surface consisting
of two nearby minimal disks, as follows.  
\begin{theorem}  \label{thm:twoflatplanes} (Colding and Minicozzi) Suppose that $M = D_1 \cup D_2\subset \B$ is an embedded minimal surface with two connected components $D_j$, each homeomorphic to a disk with $\de D_j$ simple disjoint loops in $\de \B$.  
There is an absolute constant $c>0$ such that 
if both components $D_j$ intersect the ball $B_c$ of radius $c$ centered at $0$,
then after rotation, the surfaces $D_j \cap B_{c}$ are Lipschitz graphs:
\[
D_j \cap B_{c} = \{(x',y)\in \R^2 \times \R: y = g_j(x')\} \cap B_{c}
\]
with, for example, $|\nabla g_j| \le 1/100$.  
\end{theorem}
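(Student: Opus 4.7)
The plan is to argue by contradiction, combining smooth compactness for embedded minimal surfaces with the Colding--Minicozzi one-sided curvature estimate. Suppose the theorem fails: then for every $n \in \N$ there exists an admissible $M_n = D_1^n \cup D_2^n \subset \B$ such that both components meet $B_{1/n}$, yet no rotation renders $D_j^n \cap B_{1/n}$ a pair of Lipschitz graphs with slope at most $1/100$.

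Pick $p_j^n \in D_j^n \cap B_{1/n}$. Since $D_1^n \cap D_2^n = \emptyset$ and each $D_j^n$ is an embedded minimal disk with boundary on $\de \B$, the complementary disk $D_k^n$ ($k\ne j$) plays the role of the separating sheet needed to invoke the Colding--Minicozzi one-sided curvature estimate for embedded minimal disks in a ball. This produces a uniform bound $|A_{D_j^n}|^2 \le C$ on $D_j^n \cap B_{1/2}$ with $C$ independent of $n$.

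With this curvature bound, standard smooth compactness for minimal surfaces with bounded second fundamental form yields, along a subsequence, $D_j^n \to D_j^\infty$ in $C^\infty_{\mathrm{loc}}(B_{1/2})$. Both $D_1^\infty$ and $D_2^\infty$ are smooth embedded minimal surfaces containing $0$. Because $D_1^n$ and $D_2^n$ are disjoint for every $n$, their limits $D_1^\infty$ and $D_2^\infty$ can only meet tangentially, and if so only at $0$. The strong maximum principle and unique continuation for the minimal surface equation then force $D_1^\infty \equiv D_2^\infty$; call the common limit $\Sigma$.

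Let $\pi$ denote the tangent plane to $\Sigma$ at $0$, and rotate so that $\pi = \R^2 \times \{0\}$. Since $\Sigma$ is smooth, there is a small absolute constant $c > 0$ with $\Sigma \cap B_{2c}$ expressible as a graph $y = g(x')$ satisfying $|\del g| \le 1/200$. For all sufficiently large $n$, the $C^1$ convergence $D_j^n \to \Sigma$ on $B_{1/2}$ forces $D_j^n \cap B_c = \{y = g_j^n(x')\} \cap B_c$ with $|\del g_j^n| \le 1/100$, contradicting the failure of the conclusion at $M_n$. The hard part is the uniform curvature estimate on $B_{1/2}$: embedded minimal disks can a priori accumulate helicoidally, and only the one-sided curvature estimate of Colding--Minicozzi rules that out here; once it is in hand, the remaining compactness and maximum-principle steps are routine.
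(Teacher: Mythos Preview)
Your argument is correct and rests on the same decisive input as the paper's, namely the Colding--Minicozzi one-sided curvature estimate. The paper's proof, however, is considerably more direct: it observes that one can solve a Plateau problem to produce a \emph{stable} minimal disk $S$ lying between $D_1$ and $D_2$, and then applies the one-sided curvature estimate (Theorem~0.2 of \cite{CM4}) to each $D_j$ relative to $S$. Since $S$ is stable it has bounded curvature by Schoen's estimate and is therefore nearly planar in $B_c$; the one-sided estimate then yields directly that each $D_j\cap B_c$ is a Lipschitz graph over that plane. No contradiction, no compactness, no maximum-principle step is needed.

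Your route---uniform curvature bound, then smooth compactness, then strong maximum principle---also works, but two points deserve care. First, the compactness step: absent an a priori area bound, you should restrict to the component of $D_j^n\cap B_{1/2}$ through $p_j^n$ rather than asserting convergence of all of $D_j^n\cap B_{1/2}$, and then check that this graphical component alone accounts for $D_j^n\cap B_c$. Second, your invocation of the one-sided estimate with the other disk $D_k^n$ as the ``separating sheet'' is really a citation of Corollary~0.4 of \cite{CM4}, whose proof proceeds precisely by inserting a stable disk between the two surfaces---exactly the step the paper makes explicit. So the two arguments share the same core; yours wraps it in an extra compactness layer that the direct application renders unnecessary.
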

\begin{proof}   It is not hard to show that there is a stable minimal disk in between the two embedded minimal
disks.   The theorem then follows from the ``one-sided curvature bound" Theorem 0.2 of  \cite{CM4}.
\end{proof}

In Theorem \ref{thm:twoflatplanes}, one obtains
surfaces resembling two parallel planes.   By analogy, in Case 2, the free boundary is approximated by 
two straight lines.   (A well known consequence is higher regularity bounds like $|\nabla^k g_j|  \le C_k$, but 
the first-order gradient bound is the most important because it is scale-invariant and
must be proved first.)

The minimal surface theory setting analogous to Case 1 is a single, 
embedded, minimal
disk in the ball $\B$ whose boundary is a simple loop in $\de \B$.
But in this situation there is a new family of  minimal
surfaces not present in the free boundary theory, namely helicoids.    
The other major result of Colding and Minicozzi in \cite{CM4}  says that all smooth,
embedded, minimal disks are approximated, in an appropriate sense, either by a plane 
or by a helicoid.   See also the work of Meeks and Rosenberg \cite{MeeksRosenbergHeli} characterizing 
complete, embedded minimal disks.   The absence of a family like the helicoids 
from  the two-dimensional  free boundary theory indicates that the proofs 
should be significantly easier than in the minimal surface case.

 There is also another intermediate case arising in parallel in
 both free boundary and minimal surface theory.  An important
 step in the theorems of Colding and Minicozzi is what they
 call an effective removable singularities theorem for minimal
 surfaces.  Removable singularities theorems (\cite{ChoiSchoen}, \cite{White}) say that if
 an elliptic equation such as  the minimal surface equation (mean curvature zero) is satisfied at all but one point, 
 then  it is satisfied everywhere.  In the more quantitative, so-called effective, form, the minimal
 surface is undefined inside a small ball of radius $\d>0$. 
 
 \begin{theorem} (Colding and Minicozzi \cite{CMAnnuli} and \cite{CM3}) \label{thm:removable-min} 
 Suppose that $M$ is an embedded, minimal annulus in $\d < |x| < 1$, one
 component of $\de M$ is a simple loop in $|x| = \d$, and the other
 is a simple loop in $|x| = 1$. 
  Then there are absolute constants $\d_0$ and
 $C$ such that if $\d \le \d_0$, then, after rotation,
 \[
M \cap  \{ C\d < |x| < 1/C\} 
=   \{(x',y)\in \R^2 \times \R: y = g(x')\} \cap  \{ C\d < |x| < 1/C\} 
   \]
   with $|\nabla g| \le 1/100$.  
 \end{theorem}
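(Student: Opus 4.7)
The plan is a compactness and contradiction argument, combining the classical removable singularities theorem of Choi--Schoen with the one-sided curvature estimate (Theorem 0.2 of \cite{CM4}). Suppose the conclusion fails. Then for the fixed slope bound $1/100$ there exist $\d_k \to 0$ and embedded minimal annuli $M_k$ in $\{\d_k < |x| < 1\}$, each with $\de M_k$ a pair of simple loops on the bounding spheres, such that for every $C>0$, eventually $M_k \cap \{C\d_k<|x|<1/C\}$ fails to be a graph of slope $\le 1/100$ after any rotation.

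The first step is to pass to a limit. The annuli $M_k$ have genus zero and two boundary components, and by monotonicity combined with embeddedness they carry uniformly bounded area on compact subsets of $0 < |x| < 1$. By standard compactness for minimal laminations a subsequence converges in $0<|x|<1$ to a minimal lamination $\mathcal L$ away from a closed singular set $S$ of curvature blow-up. I would then use the one-sided curvature estimate to rule out interior singular points: an interior blow-up point would produce a local stack of embedded minimal disks, and this is incompatible with the annular topology of $M_k$ propagating to the limit. Thus $S \subset \{0\}$.

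Second, I would extend the limit across the origin. Each leaf of $\mathcal L$ in $0<|x|<1$ is a smooth embedded minimal surface with at worst an isolated puncture at $0$. The leaf containing the outer boundary loop has finite area and one puncture, so the Choi--Schoen removable singularity theorem \cite{ChoiSchoen} extends it to a smooth embedded minimal disk $D$ in $|x|<1$. A genus count (Euler characteristic is preserved under the lamination convergence) forces $\mathcal L$ to consist of the single leaf $D$, possibly with some multiplicity, and embeddedness of the $M_k$ rules out multiplicity $>1$.

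Third, I would argue that $D$ is a plane. A smooth embedded minimal disk in the unit ball need not be planar in general, but here $D$ arises as the limit of annuli whose inner loop collapses to $0$, so $D$ passes through the origin and is ``closed up'' there with multiplicity one. The collapsing inner boundaries of $M_k$ supply barriers on both sides of $D$ near $0$, so that $D$ is stable in a neighborhood of the origin; Bernstein-type rigidity then forces $D$ to be flat. Once the limit is a plane $\Pi$, the one-sided curvature estimate applied to $M_k$ on $\{C\d_k < |x| < 1/C\}$, with $\Pi$ providing the one-sided barrier, yields that $M_k$ is a graph over $\Pi$ of slope at most $1/100$ for all large $k$, contradicting the choice of the $M_k$.

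The main obstacle is the third step: ruling out that the limit leaf is a helicoid-like disk or a smooth disk that passes through the origin with nontrivial twisting. Merely invoking a classical removable singularity theorem gives smoothness of $D$ through $0$, but not flatness; the argument must genuinely exploit the embeddedness of the $M_k$ and the shrinking inner boundary to produce barriers that turn smoothness at $0$ into stability, and only then can the one-sided curvature estimate finish the job quantitatively. Handling the case where the inner loops of $M_k$ wind many times around the origin, if allowed, would be the most delicate point; I expect this is excluded in advance by the hypothesis that each inner boundary is a single simple loop.
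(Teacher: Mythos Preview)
The paper does not prove this theorem; it is quoted as a result of Colding--Minicozzi with citations to \cite{CMAnnuli} and \cite{CM3}, so there is no proof in the paper to compare your outline against. Evaluating the proposal on its own terms, there are two genuine gaps.

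First, your third step---arguing that the limit leaf $D$ is a plane---does not go through. You say the collapsing inner boundaries ``supply barriers on both sides of $D$ near $0$, so that $D$ is stable in a neighborhood of the origin; Bernstein-type rigidity then forces $D$ to be flat.'' But stability on a small neighborhood of a point implies nothing about global flatness; Bernstein-type theorems require stability on all of $\R^3$, or at least on arbitrarily large balls. Your limit $D$ is merely a smooth embedded minimal disk in $\{|x|<1\}$ passing through the origin, and such disks need not be planar. You correctly flag this as the main obstacle, but the fix you sketch is not valid.

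Second, and more structurally, even if $D$ were a plane $\Pi$, the contradiction does not close. Convergence $M_k \to \Pi$ is smooth only on compact subsets of $\{0<|x|<1\}$, which controls $M_k$ on any \emph{fixed} annulus $\{r_1<|x|<r_2\}$ with $r_1>0$ but says nothing at the shrinking scale $C\d_k \to 0$. The whole content of the theorem is uniformity down to scale $\d$; a single compactness argument in the unrescaled ball cannot see that. Your attempt to bridge the gap via the one-sided curvature estimate also misfires: the $M_k$ converge to $\Pi$ and hence lie near it on both sides rather than being disjoint from it, and in any case Theorem~0.2 of \cite{CM4} is stated for embedded \emph{disks} in a half-space, not for annuli straddling a plane.

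The architecture you are missing is exactly the one the paper uses for the free-boundary analogue, Theorem~\ref{thm:removable}, in Section~4: run compactness \emph{after rescaling at the bad scale}. Given a putative bad sequence, locate scales $r_k\in(C\d_k,1/C)$ at which flatness fails, rescale by $r_k$ so that the inner radius $\d_k/r_k\to 0$ and the outer radius $1/r_k\to\infty$, and pass to a limit on $\R^3\setminus\{0\}$. After removing the singularity one obtains a \emph{complete} embedded minimal surface of genus zero with at most two ends, to which global classification (Schoen for two ends, Meeks--Rosenberg for one) applies; one must then rule out the catenoid and helicoid, just as the paper rules out the double hairpin in Theorem~\ref{keyprop}. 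Even then a second ingredient is needed---a quantitative ``flat with a small hole implies graphical'' estimate analogous to Theorem~\ref{thm:effremov-weak}---to push the flatness from each scale to a single Lipschitz graph over the full range $C\d<|x|<1/C$.
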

 \noi
We emphasize the uniformity of the gradient bound up to the scale $\d$ as $\d \to 0$. 

An  analogous free boundary theorem is as follows.
\begin{theorem} \label{thm:removable}  Suppose that $u$ is a
classical solution to the one-phase free boundary problem in the annulus
$\{z\in \C: \d < |z| <1\}$ and $F(u)$ consists of two disjoint connected curves
each with one end on $|z| =1$ and the other on $|z| = \d$. 
For any $\e>0$ there exist $\d_0>0$ and $C>0$ such that 
if $T$ denotes the connected component of $\{z\in \C: \d < |z| <1/C,\ u(z)>0\}$ 
whose closure intersects the circle $|z| = \d$, then for
all $\d$, $0< \d < \d_0$, there is a rotation such that
 \[
T \cap  \{ C\d < |z| < 1/C\} 
=   \{z = x+iy\in \C: y > g(x)\} \cap  \{ C\d < |z| < 1/C\} 
   \]
with $|\nabla g| \le \e$.  
\end{theorem}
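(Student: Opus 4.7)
The plan is to argue by contradiction via a blow-up/compactness procedure, combined with the classification of uniform limits of classical solutions (Theorem~\ref{thm:mainglobal}) and the topological constraint coming from the collapsing inner circle.

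Suppose the conclusion fails for some $\epsilon>0$. Then one extracts sequences $\delta_n\to 0$, $C_n\to\infty$, classical solutions $u_n$ in $\{\delta_n<|z|<1\}$ satisfying the two-strand hypothesis, and free boundary points $z_n\in F(u_n)\cap\{C_n\delta_n<|z|<1/C_n\}$ at which, in every rotated frame, the strand of $F(u_n)$ bounding $T_n$ fails to be a graph $y=g(x)$ with $|\nabla g|\le\epsilon$ on a ball of radius comparable to $|z_n|$. Set $r_n:=|z_n|$ and rescale
\[
v_n(z):=\frac{u_n(r_n z)}{r_n},\qquad z\in A_n:=\{\delta_n/r_n<|z|<1/r_n\}.
\]
Because $C_n\delta_n<r_n<1/C_n$, both $\delta_n/r_n\to 0$ and $1/r_n\to\infty$, so the annuli $A_n$ exhaust $\R^2\setminus\{0\}$, and the rescaled failure point $\tilde z_n:=z_n/r_n$ lies on $|\tilde z_n|=1$ with the $\epsilon$-non-flatness preserved.

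By the uniform Lipschitz bound and non-degeneracy for classical one-phase solutions I would pass to a subsequential locally uniform limit $v_n\to v_\infty$ on $\R^2\setminus\{0\}$, with $F(v_n)\to F(v_\infty)$ in local Hausdorff distance and $\tilde z_n\to\tilde z_\infty\in F(v_\infty)$, $|\tilde z_\infty|=1$. The positive phase of $u_n$ in its annulus is a single disk-shaped region bounded by the two strands and arcs of the two boundary circles, hence simply connected. Since the inner circle collapses to the origin and the outer circle escapes to infinity, the positive phase of $v_\infty$ inherits this single-component simple-connectivity structure. An appropriate version of Theorem~\ref{thm:mainglobal}, adapted to handle the possibly singular origin created by the collapsing inner circle, then identifies $v_\infty$ up to rigid motion as one of the three model solutions $P$, $TP_a$, or $H_a$.

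The heart of the proof is to use topology to force $v_\infty=P$. Each strand of $F(v_n)$ has one endpoint on the inner circle (collapsing to the origin) and one on the outer circle (escaping to infinity), so in the Hausdorff limit both inner endpoints converge to the origin, which must therefore lie in $\overline{F(v_\infty)}$. The model $TP_a$ is ruled out because its positive phase has two connected components $\{x_1>0\}$ and $\{x_1<-a\}$, incompatible with the connected limit positive phase. The model $H_a$ is ruled out because its two catenaries cannot both pass through the origin (they are bounded apart by a distance comparable to the neck parameter $a>0$), whereas both $v_n$-strands have inner endpoints collapsing to the origin; were $v_\infty=H_a$, at least one strand would need to develop an extra arm of macroscopic length reaching from a catenary to the origin, but the uniform curvature estimates of \cite{JK} for classical solutions with simply connected positive phase prevent such arms from surviving in the Hausdorff limit. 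Therefore $v_\infty=P$, with its single straight-line free boundary arising as the union of the Hausdorff limits of the two $v_n$-strands, meeting at the origin.

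Since $F(P)$ is a straight line, its slope at $\tilde z_\infty$ vanishes, and by the local uniform convergence of $F(v_n)$ to $F(v_\infty)$ the slope of $F(v_n)$ at $\tilde z_n$ is less than $\epsilon$ for all $n$ sufficiently large, contradicting the selection of $z_n$. The main obstacle is making the topological exclusion of $TP_a$ and $H_a$ rigorous: one must preserve the single-component structure of $T_n$ in the Hausdorff limit, and rule out spurious free-boundary arms arising near the collapsing inner circle. Both rely on a refinement of the classification theorem that handles the removable point-singularity at the origin together with the sharp curvature estimates of \cite{JK}, ensuring that the blow-up limit is truly one of the canonical models and that its topology matches the collapsed topology of $v_n$.
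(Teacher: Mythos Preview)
Your compactness scheme is the right opening move and matches the paper's Lemma~\ref{lem:flatness}, but the step where you force $v_\infty=P$ has a genuine error. You argue that $TP_a$ is excluded because the limit positive phase must be connected. It need not be: the inner circle of radius $\delta_n/r_n$ collapses to the origin, and the connected positive phase $T_n$ can pinch off there in the limit, producing the two half-planes of $TP_a$. The paper's Theorem~\ref{keyprop} explicitly allows $P$, $TP_a$, and $W_1$ as limits and does \emph{not} try to exclude the last two; instead, Lemma~\ref{lem:flatness} shows that in all three cases the component $T_n$ (which corresponds to only one of the two half-planes when the limit is $TP_a$ or $W_1$) is $\epsilon$-close to $P$. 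Your exclusion of $H_a$ is also not rigorous as written; the paper rules it out by a flux-balance argument (Step~5 of Theorem~\ref{keyprop}), not by the ``both strands must reach the origin'' heuristic.

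There is a second, structural gap. Your contradiction hypothesis is that at some point $z_n$ the free boundary fails to be an $\epsilon$-flat graph on a ball of radius comparable to $|z_n|$. Refuting this yields only scale-by-scale flatness, each scale with its own rotation $\rho_r$. That is strictly weaker than the theorem's conclusion, which demands a \emph{single} rotation making $F(u)\cap T$ a Lipschitz graph over the entire annulus $\{C\delta<|z|<1/C\}$; a logarithmic spiral is flat at every scale yet is not a global graph. The paper handles this with a separate ``flat with a small hole implies Lipschitz'' step (Theorem~\ref{thm:effremov-weak}), which uses the conformal map $U=u+i\tilde u$ and a barrier estimate (Lemma~\ref{lem:semicircle}) to control the total turning of $\nabla u$ uniformly in $\delta$. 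Your proposal omits this step entirely.
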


Theorem \ref{thm:removable} is new to this paper.   We proved
a special case in \cite{JK} with an extra hypothesis of flatness as opposed 
to purely topological assumptions on the free boundary.   That theorem is 
the effective removable theorem analogue of a ``flat implies Lipschitz"
type theorem, or, put another way ``flat with a small hole implies Lipschitz". 

The paper is organized as follows.  In Section 2 we state the main
definitions and results.  In Section 3 we  sketch the proof of 
Theorem \ref{thm:mainglobal}, the global classification of limits of classical
solutions in dimension 2. In Section 4 we prove the quantitative removable
singularities theorem, Theorem \ref{thm:removable}.   In Section 5, we
describe the double hairpin solution of Hauswirth {\em et al.} in terms of a
conformal mapping based on the solution itself.   In Section 6, we use
this description to sketch the proof of our main classification theorem
from \cite{JK} of  classical solutions with simply connected positive phase, Theorem \ref{thm:mainsimplyconn}.
In Section 7, we discuss multiply connected solutions in the plane and Traizet's
correspondence with minimal surfaces, and we find a simple formula, equation \eqref{eqn:ScherkBubble}, 
for the multiply connected free boundary associated to the Scherk simply periodic surface.  In Section 8, we discuss open problems in dimensions 2 and 3.  In particular, even in dimension 2, there is unfinished business 
related to multiply connected classical solutions, as well as solutions that arise from
Caffarelli's Perron construction in \cite{CafIII}. 

We thank Toby Colding for explaining to us his proof with W. Minicozzi of Theorem \ref{thm:twoflatplanes},
and we thank Christos Mantoulidis for discussions about elliptic semilinear equations. 

\section{Main classifications results}

Here we state the main results of \cite{JK}.    In subsequent sections
we will sketch the proofs in sufficient detail to be able to make use of those
methods in the proof of the new result, Theorem \ref{thm:removable}.

Following \cite{CafIII} and \cite{CafSalsa}, we define viscosity solutions. 
\begin{definition}  \label{def:visc.super} A viscosity supersolution of \eqref{FBP} is a non-negative
continuous function $w$ in $\O\subset \R^n$ such that $\D w \le 0$ in $\O^+(w)$ and 
for every $x_0\in F(w)$ with a tangent ball $B$ from the positive side
($x_0\in \de B$ and $B\subset \O^+(w)$), there is $\a \le 1$ such that
\[
u(x) = \a  \langle x-x_0, \nu\rangle^+ + o(|x-x_0|) 
\]
as $x\to x_0$ non-tangentially in $B$, with $\nu$ the inner normal to $\de B$ at $x_0$. 
\end{definition}
\noi
In other words, $u$ is subharmonic in its positive phase and has slope less than or equal
to one on the boundary at points with interior tangent balls. 

\begin{definition} \label{def:visc.sub}   A viscosity subsolution of \eqref{FBP} is a non-negative
continuous function $w$ in $\O\subset \R^n$ such that $\D w \ge 0$ in $\O^+(w)$ and 
for every $x_0\in F(w)$ with a tangent ball $B$ in the zero set
($x_0\in \de B$ and $B\subset \{w=0\}$), there is $\a \ge 1$ such that
\[
u(x) = \a  \langle x-x_0, \nu\rangle^+ + o(|x-x_0|) 
\]
as $x\to x_0$ non-tangentially in $B^c$, with $\nu$ the outer normal to $\de B$ at $x_0$. 
\end{definition}

A {\em viscosity solution} in $\O$ is a function that is both a supersolution and a subsolution in
the sense above.   Note that this definition allows for generalized solutions of the form
\[
\a x_1^+ + \b x_1^-, \quad 0 \le \b \le \a  \le 1.
\]
The only classical solution among these is the half-plane solution, the case $\b=0$, $\a  = 1$. 

We also make use of another notion of weak solution introduced by G. Weiss \cite{Weiss1}.

\begin{definition}  \label{def:varsolution}  A function $u\in H^1_{\text{loc}}(\O)$ is a variational solution of \eqref{FBP} if
$u\in C(\O)$, $u\in C^2(\O^+(u))$, and $u$ satisfies the Euler-Lagrange identity in integrated form:
\[
\int_\O (|\nabla u|^2 + 1_{\{u>0\}} ) \mbox{\rm div} \, \psi - 2 (\nabla u) D\psi (\nabla u)^T \, dx = 0
\]
for every test vector field $\psi \in C_0^\infty (\O, \R^n)$. 
\end{definition}

\begin{theorem} \label{thm:mainglobal} (\cite[Theorem 1.1]{JK})  Let $u_k$ be a sequence of classical solutions to \eqref{FBP} in disks $B_{R_k}\subset \R^2$
centered at $0$ with radii $R_k\to \infty$ as $k\to \infty$, then there is a subsequence that converges uniformly on compact subsets
to a function $U$ defined on all of $\R^2$.  Suppose further that $0\in F(u_k)$ and each connected curve of $F(u_k)$ has both ends on $\de B_{R_k}$. Then up to rigid motion $U$ is either the half plane solution $P$, 
a two plane solution $TP_a$, for some $a>0$, a double hairpin $H_a$ for some $a>0$, or
a wedge $W_1(x) = |x_1|$.
\end{theorem}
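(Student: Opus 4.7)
The plan is to argue in four steps: (i) extract a subsequential uniform limit $U$ by compactness; (ii) upgrade $U$ to a weak solution of \eqref{FBP} on all of $\R^2$; (iii) use the topological hypothesis on $F(u_k)$ to force the positive phase $\{U>0\}$ to be simply connected; and (iv) invoke existing classifications of entire simply connected one-phase solutions, enlarged to include the degenerate viscosity limit $W_1$.

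For step (i), classical solutions are harmonic on their positive phase and satisfy $|\nabla u_k|=1$ on the smooth $F(u_k)$; by the reflection principle across the smooth free boundary (or by boundary gradient estimates) one obtains a uniform Lipschitz bound for $u_k$ on compact subsets of $\R^2$ once $R_k$ is large. Arzel\`a--Ascoli then yields a subsequence converging uniformly on compact sets to a non-negative Lipschitz function $U$. The standard non-degeneracy estimate $\sup_{B_r(x_0)} u_k \ge c\, r$ for $x_0\in F(u_k)$, valid for classical solutions, passes to the limit and gives Hausdorff convergence $F(u_k)\to F(U)$ on compact sets; in particular $0\in F(U)$. For step (ii), viscosity solutions are closed under uniform limits (by the comparison-with-plane-solutions argument introduced in \cite{CafIII}), so $U$ is a viscosity solution on $\R^2$; moreover the Weiss monotonicity formula passes to the limit, making $U$ a variational solution in the sense of Definition \ref{def:varsolution}.

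For step (iii), the hypothesis that each connected curve of $F(u_k)$ has both endpoints on $\partial B_{R_k}$ means, by planarity and the Jordan curve theorem, that no arc of $F(u_k)$ bounds a region in $B_{R_k}$; in particular every component of $\{u_k>0\}$ that stays inside $B_{R_k}$ is simply connected. I claim this property descends to $U$: no bounded component of $\{U>0\}$ or of $\{U=0\}$ can exist, so each component of $\{U>0\}$ is simply connected and every component of $F(U)$ is properly embedded. The verification uses Hausdorff convergence of $F(u_k)$ together with non-degeneracy to argue that a bounded component of either phase at the limit would force, for large $k$, an arc of $F(u_k)$ to either close up or re-enter a bounded ball, contradicting the hypothesis. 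For step (iv), among entire classical simply connected solutions the only possibilities up to rigid motion are $P$, $TP_a$ and $H_a$ by Khavinson--Lundberg--Teodorescu and Traizet; to upgrade to uniform limits of classical solutions one must allow degenerations as the parameter pinches, and the only new viscosity solution produced in this way is $TP_0 = W_1(x) = |x_1|$. A blow-up argument at any self-touching point of $F(U)$ reduces to a one-homogeneous viscosity solution that is either a half-plane or such a wedge, which pins down $U$ as one of the listed four.

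The main obstacle is step (iii): the no-closed-loop property is a global topological constraint on $F(u_k)$ and is not immediately preserved under uniform limits, since two distant strands of $F(u_k)$ may drift together and appear to close up in $U$. Handling this requires combining the Hausdorff convergence of free boundaries with the structure of classical solutions (uniqueness of sign from each side of a smooth strand) to exclude spurious limit loops. A secondary difficulty in step (iv) is that the limit $U$ is only a viscosity/variational, not classical, solution, so the conformal-mapping classification must be supplemented by a careful treatment of degenerate free boundary points; using that $U$ is variational (hence the Weiss density is well defined) is what rules out all configurations beyond $P$, $TP_a$, $H_a$, and $W_1$.
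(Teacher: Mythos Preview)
Your steps (i) and (ii) are correct and match the paper's approach (Lemmas~\ref{lem:Lipschitz1}, \ref{lem:nondegeneracy1}, and Proposition~\ref{prop_limitofsolns}). However, there is a genuine gap in step (iv), and you have misidentified where the real difficulty lies. Simple connectivity of $\{U>0\}$ is not the hard part; the hard part is that $U$ is \emph{a priori} only a viscosity/variational solution, and the Khavinson--Lundberg--Teodorescu/Traizet classification applies only to \emph{classical} solutions. Your proposed fix---``a blow-up argument at any self-touching point of $F(U)$ reduces to a one-homogeneous viscosity solution that is either a half-plane or a wedge, which pins down $U$''---does not work: knowing that every blow-up is $P$ or $W_s$ does not by itself classify $U$. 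You must actually show that $F(U)$ is smooth (equivalently, that every blow-up is $P$), and you give no mechanism for ruling out isolated wedge points on $F(U)$ when $U$ is not globally $W_1$.

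The paper's argument in this step is substantially different and supplies exactly the missing mechanism. After taking a blow-down $V$ (Proposition~\ref{prop_blowdowns}), the paper splits into cases (Proposition~\ref{prop:Voutcomes}). If $V=P$, Alt--Caffarelli flatness gives $U=P$. If $V=W_s$, a majorization argument yields $|\nabla U|\le 1$ on $\{U>0\}$; if equality holds somewhere, subharmonicity of $|\nabla U|^2$ forces $U$ linear on a component, hence $U=TP_a$ or $W_1$. The remaining case $|\nabla U|<1$ everywhere is handled by the \emph{flux balance} Lemma~\ref{lem:fluxbalance}, applied not to $U$ but to the approximating classical $u_k$: this is the key tool you are missing. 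Via Lemma~\ref{lem:flattypes} and Lemma~\ref{lem:almostsmooth}, flux balance on $u_k$ shows $F(U)$ can have at most one non-smooth point, and the Hopf lemma applied to $|\nabla U|^2$ gives strictly positive curvature of $F(U)$ at smooth points, making the zero phase locally convex. A further convexity argument then shows the single candidate bad point is in fact regular, so $U$ is classical, and only then does Traizet apply. None of the ingredients---flux balance on the approximants, the $|\nabla U|\le 1$ dichotomy, or the curvature argument---appear in your outline.
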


\noi
Note that the property that the connected curves of $F(u_k)$ end at $\de B_{R_k}$ says that
each connected component of the positive phase $B_{R_k}^+(u_k)$ is simply connected.
The wedge $W_1(x)$ is the limit as $a\to 0$ of both the two plane solution $TP_a$ and the hairpin
$H_a$.

\begin{theorem}  \label{thm:mainsimplyconn}  (\cite[Theorem 1.2]{JK})  If $u$ 
is a classical solution to \eqref{FBP} in
the unit disk, $\DD$, and the positive phase $\DD^+(u)$ is connected 
and simply connected, 
then there exist absolute constants $C_0$ and $c>0$ such
that either

a)  at most one component of the zero phase $\DD^0(u) := \{z\in \DD: u(z) = 0\}$ 
intersects $B_c$, the disk of radius $c$ centered at $0$, $F(u) \cap B_c$ consists
of at most two simple curves, and the curvature of $F(u)$ at every point of
$B_c$ is bounded by $C_0$, or

b) exactly two components of $\DD^0(u)$ intersect $B_c$, and the picture
is almost isometric to a double hairpin in the following sense.

There is a unique
saddle point $z_0$ of $u$ in $B_{2c}$, and, setting
$a = u(z_0)$, the two components of $F(u)$ that intersect
$B_c$ are at
a distance apart comparable to $a$.  Furthermore,
there is a conformal 
mapping 
\[
\Psi: B_{4c}\cap \O_a \to  D^+(u),  \quad \Psi(0) = z_0,
\]
that is close to an isometry: for some $\th \in \R$,
\[
|\Psi'(z) - e^{i\th}| \le 1/100, \quad \mbox{for all} \ 
z \in  B_{4c} \cap \O_a.
\]
The mapping $\Psi$ extends smoothly to all
$z\in B_{4c}\cap \de \O_a$,
and  for all such $z$,
\[
\Psi(z) \in F(u),  \quad |\Psi'(z)| = 1.
\]
Finally, we have curvature bounds: 
\[
|\Psi''(z)| \le 1/100; \ z\in B_{4c}\cap \O_a;  \quad
|\k(\Psi(z)) - \k_a(z)| \le 1/100, \ z\in \de B_{4c} \cap \de \O_a,
\]
with $\k$ the curvature of $F(u)$ and $\k_a$ the curvature
of $\de \O_a$. 
\end{theorem}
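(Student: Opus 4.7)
The proof is a blow-up/compactness argument based on the entire-solution classification in Theorem \ref{thm:mainglobal}, combined with a conformal description of the hairpin family (to be developed in Section 5). First, observe that connectedness and simple connectedness of $\DD^+(u)$ force every component of $\DD^0(u)$ to touch $\de \DD$; hence every component of $F(u)$ is an arc with both endpoints on $\de \DD$. This topological property passes to rescalings $\tilde u(z) := u(z_0 + rz)/r$, $z_0 \in \bar B_{1/2} \cap F(u)$, so the hypotheses of Theorem \ref{thm:mainglobal} apply at every scale, and every subsequential blow-up limit is one of $P$, $TP_a$, $H_a$, or the wedge $W_1$.

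A first compactness step rules out three-stranded or three-zero-phase blow-ups, since none of the entire solutions has more than two of each. Thus, for $c$ smaller than an absolute constant, at most two components of $F(u)$ and at most two components of $\DD^0(u)$ meet $B_c$, and the dichotomy (a) versus (b) corresponds to at most one, or exactly two, zero-phase components meeting $B_c$. In case (a) the admissible blow-up limits are $P$ or $TP_a$ with $a$ bounded below in terms of the local inter-strand distance; the wedge $W_1$ is eliminated by choosing the rescaling at a scale comparable to the local thickness of the zero phase. Standard $C^{1,\a}$ regularity at flat free boundary points then yields the absolute curvature bound $C_0$.

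In case (b), the two zero-phase components meeting $B_c$ are separated by the positive phase; rescaling at a scale $r$ equal to the distance between the two strands produces a blow-up limit that must be $H_1$. A direct computation shows that $H$ has a unique, non-degenerate saddle at the origin with $H(0)=1$, so non-degenerate critical points persist under $C^2$ perturbations: $u$ has a unique critical point $z_0 \in B_{2c}$, and setting $a := u(z_0)$ pins down the matching member $H_a$ of the hairpin family (since $H_a(0)=a$). Using the conformal parameterization $\f_a:\S\to\O_a$ built in Section 5 from $H_a$ itself, the near-hairpin approximation of $u$ transfers to a conformal map $\Psi: B_{4c}\cap \O_a \to \DD^+(u)$ with $\Psi(0)=z_0$. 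The first-order bound $|\Psi'(z)-e^{i\th}|\le 1/100$ follows from holomorphic Cauchy integral estimates applied to the boundary values of $\Psi'$, which are themselves controlled by the $L^\infty$ and $C^{1,\a}$ closeness of $u$ to $H_a$ near the smooth part of the free boundary.

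The second-order bounds $|\Psi''|\le 1/100$ and $|\k(\Psi)-\k_a|\le 1/100$ require a higher-order matching between $u$ and $H_a$: one differentiates the free boundary identity $|\nabla u\circ \Psi|\cdot|\Psi'|=1$ on $\de\O_a$ and invokes Schauder boundary estimates for $u$ at smooth free boundary points. The \emph{main obstacle}, in direct analogy with the Colding--Minicozzi catenoid theorem, is precisely this step: the curvature of $\de\O_a$ is highly non-uniform on $\de B_{4c}\cap\de\O_a$ (small far from the neck, of order $1/a$ near it), so the approximation must be genuinely pointwise, not merely scalar. First-order hairpin closeness does not automatically upgrade to second-order closeness beyond scale $\sqrt{a}$; what rescues the argument is the non-degeneracy of the saddle of $H_a$ at $z_0$, together with the fact that the outer scale $4c$ is bounded below independently of $a$, so that compactness can be re-applied to the pair $(u,H_a)$ in a window of fixed conformal size and the error in $|\k(\Psi)-\k_a|$ propagated uniformly.
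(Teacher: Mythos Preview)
Your overall strategy (compactness plus the classification Theorem \ref{thm:mainglobal}, then match to the hairpin via its saddle value $a=u(z_0)$) is the right starting point and matches the paper through the local estimate \eqref{eq:localhairpin}. But two essential steps are missing or incorrect.

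\textbf{Construction of $\Psi$.} You write that ``the near-hairpin approximation of $u$ transfers to a conformal map $\Psi$'', but this is the heart of the matter and you have not said how $\Psi$ is built. The paper defines $U=u+i\tilde u$ on the simply connected region $B_{10c}(z_0)\cap\DD^+(u)$, cuts this region in half along the two steepest-descent arcs $\mathcal A_\pm$ from the saddle $z_0$, and observes that on each half $U$ is a conformal map onto a subset of the slit half-plane $\S_a$; then $\Psi:=U^{-1}\circ U_a$. The two halves glue consistently because both $U$ and $U_a$ are branched double covers of $\S_a$ with branch point at the slit tip $a$, and the normalization $a=u(z_0)=H_a(0)$ makes the branch points coincide. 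Without this explicit construction you have no map on which to prove estimates, and in particular no reason why $|\Psi'|=1$ holds \emph{exactly} on $B_{4c}\cap\de\O_a$.

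\textbf{Uniformity in $a$ of the estimates.} Your proposed route to $|\Psi'-e^{i\th}|\le 1/100$ and $|\Psi''|\le 1/100$ via Cauchy integrals, Schauder estimates, and ``re-applying compactness to the pair $(u,H_a)$ in a window of fixed conformal size'' does not work: the domain $B_{4c}\cap\O_a$ degenerates as $a\to 0$ (its two halves pinch apart), so there is no compact family and no limiting object to compare against. The $C^{1,\a}$ closeness of $u$ to $H_a$ that you invoke is only available on $B_{Ma}(z_0)$, not on $B_{4c}$. The paper instead reduces everything to a \emph{linear} boundary problem for $f=\log|\Psi'|$ on $B_{8c}\cap\O_a$: one has $f=0$ on $\de\O_a$ (from $|\nabla u|=|\nabla H_a|=1$ there) and $|f|\le\e$ on the outer annulus $B_{8c}\setminus B_{4c}$ (where both $u$ and $H_a$ are close to the wedge), hence $|f|\le\e$ everywhere by the maximum principle. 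The crucial step is then a gradient bound $|\nabla f|\le C\e$ uniform in $a$, obtained by a barrier argument using $H_a$ itself to majorize the Green's function of $\O_a$. This linear reduction and barrier are precisely what supply the $a$-uniformity that your compactness sketch cannot.
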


\section{Global limits of classical free boundary solutions}

In this section we outline the proof from \cite{JK} of the characterization of 
global limits of classical solutions Theorem \ref{thm:mainglobal}.   
The reader should refer to the original for further details.  This presentation
emphasizes the elements we will need to revisit in order to prove
the removable singularities result of the next section.

\begin{lemma}[Lipschitz bound] \label{lem:Lipschitz1}
There is an absolute constant $C$ such that
if $u$ is a classical solution in the disk $B_R$ and $0\in F(u)$, then
\[
\max_{|z|\le R/2} |\nabla u(z)| \le C.
\]
\end{lemma}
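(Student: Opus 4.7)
The plan is to argue by blow-up and contradiction, with the Bochner identity $\Delta |\nabla u|^2 = 2|D^2 u|^2 \ge 0$ (valid since $u$ is harmonic on $\O^+(u)$) as the main analytic input. Combined with the free boundary condition $|\nabla u|^2 \equiv 1$ on $F(u)$, it says that $|\nabla u|^2$ is subharmonic in the positive phase with unit boundary value on $F(u)$, so any large value of $|\nabla u|$ at an interior point must propagate to the outer boundary of the relevant domain. By the scale invariance $u(z)\mapsto R^{-1}u(Rz)$ of \eqref{FBP} it suffices to treat $R=2$ and bound $|\nabla u|$ on $\overline{B_1}$ by an absolute constant.

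Suppose the lemma fails. Extract classical solutions $u_k$ in $B_2$ with $0 \in F(u_k)$ and points $z_k \in \overline{B_1}$ such that $M_k := |\nabla u_k(z_k)| \to \infty$, and set $d_k := \mathrm{dist}(z_k, F(u_k))$. The interior gradient estimate for the harmonic $u_k$ on $B_{d_k}(z_k)$ forces $\sup_{B_{d_k}(z_k)} u_k \gtrsim M_k d_k$, so either $d_k \to 0$ or the $u_k$ blow up pointwise. Rescale
\[
v_k(w) := d_k^{-1}\, u_k(z_k + d_k w),
\]
obtaining classical solutions in a disk about the origin of radius $\gtrsim 1/d_k$, with $B_1(0) \subset \O^+(v_k)$, $F(v_k)$ tangent to $\partial B_1(0)$ at some $w_k^*$ with $|\nabla v_k(w_k^*)| = 1$, and $|\nabla v_k(0)| = M_k \to \infty$.

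The key step is to establish a uniform $L^\infty$ bound for $v_k$ on $\overline{B_{3/4}(0)}$; the interior gradient estimate for the harmonic $v_k$ on $B_{1/4}(0) \subset \O^+(v_k)$ then gives $|\nabla v_k(0)| \le C$, contradicting $M_k \to \infty$. For this bound I would compare $v_k$ with the distance function $\delta_k(w) := \mathrm{dist}(w, F(v_k))$: by the $C^\infty$-regularity of $F(v_k)$ and the boundary identity $|\nabla v_k(w_k^*)| = 1$, the two functions share the same first-order expansion at $w_k^*$, and the boundary Harnack principle applied to the non-negative harmonic $v_k$ on $\O^+(v_k) \cap B_{3/2}(0)$ propagates the local comparison $v_k \sim \delta_k$ to $v_k \le C\delta_k \le 2C$ on $\overline{B_{3/4}(0)}$.

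The principal obstacle is the uniformity of the boundary Harnack constant in $k$: the domain $\O^+(v_k) \cap B_{3/2}(0)$ can vary in shape, and further strands of $F(v_k)$ might in principle intrude and degenerate the geometry. This is where the classical hypothesis on $u_k$ is essential---the $C^\infty$-smoothness of $F(v_k)$ together with the rigid non-degeneracy $|\nabla v_k| \equiv 1$ on $F(v_k)$ provide a uniformly smooth piece of $\partial\O^+(v_k)$ through $w_k^*$, with constants independent of $k$. Once the Lipschitz bound is secured, it becomes the foundational compactness tool for the remainder of Section~3 and the proof of Theorem~\ref{thm:mainglobal}.
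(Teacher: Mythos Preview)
Your blow-up setup is fine, but the crucial step---getting a uniform $L^\infty$ bound on $v_k$ in $B_{3/4}$---has a genuine gap. You propose to compare $v_k$ with the distance function $\delta_k$ via the boundary Harnack principle on $\O^+(v_k)\cap B_{3/2}$. Two problems: first, $\delta_k$ is not harmonic, so it is not an admissible comparison function; second, and more seriously, the boundary Harnack constant depends on the geometry of $\O^+(v_k)$ (Lipschitz or NTA character), and you have no uniform control on that geometry. The fact that each $F(v_k)$ is $C^\infty$ is qualitative; the curvature of $F(v_k)$ could blow up with $k$, and obtaining uniform curvature bounds is a \emph{harder} problem than the Lipschitz bound you are trying to prove. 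So the argument is circular at this point. (The Bochner identity you announce at the outset is also never actually used.)

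The fix is simple and is exactly what the paper invokes (citing \cite{AC}): apply the Hopf lemma on the \emph{ball} $B_1(0)$, not on the irregular domain $\O^+(v_k)$. Since $v_k>0$ is harmonic on $B_1(0)$ and vanishes at the touching point $w_k^*\in \partial B_1(0)\cap F(v_k)$, Harnack on $B_{1/2}(0)$ followed by a radial barrier in the annulus $B_1(0)\setminus B_{1/2}(0)$ gives $|\nabla v_k(w_k^*)|\ge c\,v_k(0)$ with an \emph{absolute} constant $c$. The free boundary condition $|\nabla v_k(w_k^*)|=1$ then forces $v_k(0)\le 1/c$, and the interior gradient estimate yields $|\nabla v_k(0)|\le C$, contradicting $M_k\to\infty$. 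In fact the blow-up is unnecessary: the same Hopf-on-a-ball argument applied directly at $x\in B_{R/2}^+(u)$ with $r=\mathrm{dist}(x,F(u))$ gives $u(x)\le Cr$ and hence $|\nabla u(x)|\le C$ in one step.
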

This lemma is proved in \cite{AC} using the Hopf lemma.
In particular, this family of classical solutions $u_k$  in $B_{R_k}$ 
is equicontinuous and hence precompact in the uniform norm on compact subsets. 
Therefore, there is a uniformly convergent subsequence.  Thus we obtain
the existence of a limit function $U$ as in Theorem \ref{thm:mainglobal}.

Our next lemma shows $u_k$ is non-degenerate, uniformly in $k$.

\begin{lemma}[Non-degeneracy] \label{lem:nondegeneracy1} 
If $u$ is a classical solution in $B_R$ 
and, in addition, the positive phase of $u$ in $B_R$ 
is simply connected, then there is an absolute constant $c>0$ such that for any $z\in F(u)$ and any radius $r>0$ such that $B_r(z)\subseteq B_R$
\[
\sup_{B_r(z)}u \ge c r.
\]
\end{lemma}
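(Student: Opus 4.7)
Plan. Argue by contradiction, exploiting the Weiss monotonicity formula. Suppose no such absolute $c$ exists: then for each $k$ there is a classical solution $u_k$ in $B_{R_k}$ with simply connected positive phase, a free boundary point $z_k \in F(u_k)$, and a radius $r_k$ with $B_{r_k}(z_k) \subseteq B_{R_k}$ and $\sup_{B_{r_k}(z_k)} u_k < r_k/k$. Translate and rescale: put $v_k(w) := u_k(z_k + r_k w)/r_k$. Then $v_k$ is classical on $B_{\rho_k}$, where $\rho_k = (R_k - |z_k|)/r_k \geq 1$, with $0 \in F(v_k)$, simply connected positive phase, $\sup_{B_1} v_k < 1/k$, and a uniform Lipschitz constant $L$ by Lemma~\ref{lem:Lipschitz1}.

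The main tool is the Weiss monotonicity formula: since classical solutions are variational solutions in the sense of Definition~\ref{def:varsolution}, the functional $W_r(v_k, 0) := r^{-2} \int_{B_r(0)} (|\nabla v_k|^2 + 1_{\{v_k > 0\}})\,dx - r^{-3} \int_{\partial B_r(0)} v_k^2\,d\sigma$ is non-decreasing in $r$. Classical smoothness of $F(v_k)$ at $0$ (where $|\nabla v_k(0)| = 1$) identifies the blowup of $v_k$ at $0$ as a half-plane solution $P$, for which a direct computation in two dimensions yields $W_r(P, 0) = \pi/2$; hence $W_r(v_k, 0) \geq \pi/2$ for every $r \in (0, \rho_k)$ and every $k$.

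I now aim to show $W_1(v_k, 0) \to 0$ as $k \to \infty$, yielding the desired contradiction. The boundary term $\int_{\partial B_1} v_k^2 \leq 2\pi/k^2$ is trivially small. Green's identity on the harmonic region $B_1 \cap \{v_k > 0\}$, together with $v_k = 0$ on $F(v_k)$, $v_k < 1/k$ on $\partial B_1 \cap \{v_k > 0\}$, and $|\nabla v_k| \leq L$, yields $\int_{B_1} |\nabla v_k|^2 \leq 2\pi L/k \to 0$. What remains is the volume term $|\{v_k > 0\} \cap B_1| \to 0$. Simple connectedness enters here: conformally uniformize $\Omega^+(v_k)$ by a Riemann map $\phi_k$ to the upper half-plane so that $F(v_k)$ corresponds to a portion of the real axis, and use the free boundary condition $|\nabla v_k| = 1$ to force the conformal factor $|\phi_k'|$ to equal $1$ on $F(v_k)$. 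A Koebe-style distortion estimate, combined with the fact that $\phi_k(\{v_k > 0\} \cap B_1)$ is confined to a horizontal strip of width $1/k$ in the conformal picture, forces $|\{v_k > 0\} \cap B_1|$ to tend to zero with $\sup_{B_1} v_k$, completing the contradiction.

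The main obstacle is the quantitative conformal-distortion step. Without simple connectedness, isolated ``islands'' of positive phase can carry area bounded away from zero even as the supremum vanishes, and the scheme above collapses; this is precisely why the topological assumption on $\Omega^+(u)$ appears in the hypothesis. SCPP is what permits a single Riemann map to be used and controlled via Koebe's theorem: the slope condition normalizes the conformal factor along $F(v_k)$, while bounding the Koebe distortion requires working inside a suitable interior subdomain where the conformal map stays well away from the remaining boundary portions of $\Omega^+(v_k)$ (those sitting on $\partial B_{\rho_k}$). Making this distortion estimate uniform in $k$ is the technical heart of the proof and the step that most crucially uses the simply connected hypothesis.
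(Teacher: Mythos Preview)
Your approach via Weiss monotonicity and a contradiction argument is a genuinely different route from the paper's. The paper does not prove this lemma in the text; it cites \cite[Proposition 3.4]{JK}, and the parallel argument visible in Lemma~\ref{lem:non-degeneracy2} shows the intended method: apply Green's formula with the logarithmic Green's function $G_r(x)=\tfrac{1}{2\pi}\log(|x|/r)$ on $B_r(z)^+(u)$. The free boundary term contributes $\int_{F(u)\cap B_r(z)}\tfrac{1}{2\pi}\log(r/|x-z|)\,d\mathcal H^1$, and simple connectivity enters \emph{only} through the topological fact (stated just after the lemma) that each component of $F(u)$ has both ends on $\partial B_R$, so the free boundary strand through $z$ has length at least $r$ inside $B_{r/2}(z)$. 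That length bound alone gives $\fint_{\partial B_r(z)}u\ge cr$ directly --- no compactness, no monotonicity formula, no conformal mapping.

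Your scheme, by contrast, has a real gap at the volume step. You write that the Riemann map $\phi_k:\Omega^+(v_k)\to\mathbb H$ has $|\phi_k'|=1$ on $F(v_k)$ because $|\nabla v_k|=1$ there; this is not correct. A Riemann map is determined by the domain alone (up to three real parameters), and the free boundary condition on $v_k$ places no constraint whatsoever on $|\phi_k'|$. The map that \emph{does} satisfy $|V_k'|=|\nabla v_k|=1$ on $F(v_k)$ is the holomorphic completion $V_k=v_k+i\tilde v_k$, but $V_k$ need not be univalent (it has critical points wherever $\nabla v_k$ vanishes), so Koebe distortion does not apply, and there is no evident lower bound on $|V_k'|$ away from $F(v_k)$ to let you pull back the strip area $O(1/k)$ to a bound on $|\{v_k>0\}\cap B_1|$. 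You also have no control on the \emph{height} of the image in the strip without first bounding $\mathcal H^1(F(v_k)\cap B_1)$. In short, the ``Koebe-style distortion estimate'' you flag as the technical heart is not just hard --- as written it rests on a confusion between two different conformal maps, and I do not see how to repair it without essentially importing the Green's-function length argument you were trying to avoid. The paper's direct approach sidesteps all of this: simple connectivity is used once, purely topologically, to get a length lower bound on $F(u)\cap B_r(z)$.
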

\noi 
In other words, $u$ grows linearly away from the free boundary.  This
non-degeneracy statement is Proposition 3.4 of \cite{JK}.  (Simple connectivity 
implies that each connected curve of 
$F(u)$ ends on $\de B_R(0)$.)

The next proposition is well known (see \cite[Lemma 1.21]{CafSalsa}, \cite[Proposition 4.2]{JK}) and says that uniform limits of Lipschitz, non-degenerate variational/viscosity solutions inherit those same properties.  In particular, $U$ has these properties. 
\begin{prop}\label{prop_limitofsolns} Let $\{u_k\} \in H^1_{\text{loc}}(\O) \cap C(\O)$ be a sequence of variational (respectively, viscosity) solutions of
\eqref{FBP} which satisfies
\begin{itemize}
\item (Uniform Lipschitz continuity) There exists a constant C, such that $\|\nabla u_k\|_{L^{\infty}(\O)} \leq C$;
\item (Uniform non-degeneracy) There exists a constant c, such that $\sup_{B_r(x)} u_k \geq cr$ for every $B_r(x) \subseteq
\O$, centered at a free boundary point $x\in F(u_k)$.
\end{itemize}
Then any limit $u \in H^1_{\text{loc}}(\O) \cap C(\O)$ of a uniformly
convergent on compacts subsequence $u_k \to u$ satisfies
\begin{enumerate}[(a)]
\item $\overline{\{u_k>0\}} \rightarrow \overline{\{u>0\}}$ and $F(u_k) \rightarrow F(u)$ locally in the Hausdorff distance;
\item $1_{\{u_k>0\}} \rightarrow 1_{\{u>0\}}$ in $L^1_{\text{loc}}(\O)$;
\item $\nabla u_k \rightarrow \nabla u$ a.e. in $\O$.
\end{enumerate}
Moreover, $u$ is a Lipschitz continuous, non-degenerate variational (resp. viscosity)
solution of \eqref{FBP}.
\end{prop}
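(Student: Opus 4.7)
The plan is to establish the four conclusions in the order listed, noting how tightly they are linked: Hausdorff convergence of positive phases drives the $L^1$ convergence of indicators, interior harmonic regularity then delivers a.e.\ gradient convergence, and once these are in hand the variational or viscosity condition for $u$ follows by passage to the limit.

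First I would prove the Hausdorff convergence. Fix a compact $K\Subset \O$. If $x\in \overline{\{u>0\}}\cap K$, pick $y$ arbitrarily close to $x$ with $u(y)>0$; uniform convergence gives $u_k(y)>0$ eventually, so $x$ lies within any prescribed distance of $\overline{\{u_k>0\}}$. Conversely, if $x_k\in \overline{\{u_k>0\}}\cap K$ with $x_k\to x$, uniform non-degeneracy supplies $y_k\in \overline{B_r(x_k)}$ with $u_k(y_k)\geq cr$ for every $r>0$, and passing to the limit gives $u(y)\geq cr$ for some $y\in \overline{B_r(x)}$, so $x\in \overline{\{u>0\}}$. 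The analogous argument using that interior points of $\{u_k=0\}$ stay interior under uniform convergence yields $F(u_k)\to F(u)$ in Hausdorff distance on compacts.

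For the indicator convergence, I would first pass the Lipschitz and non-degeneracy bounds to $u$ itself (they involve only pointwise values), and then show $|F(u)|=0$: at any $x\in F(u)$, non-degeneracy produces $y\in B_r(x)$ with $u(y)\geq cr$, and the Lipschitz bound forces $u>cr/2$ on a ball of radius comparable to $r$, so $\{u>0\}$ has Lebesgue density bounded below by an absolute constant at every free boundary point, incompatible with $|F(u)|>0$ by the Lebesgue density theorem. On any compact set, Step~1 then confines $\{u_k>0\}\triangle\{u>0\}$ to an arbitrarily thin neighborhood of $F(u)$, so its measure tends to zero. For a.e.\ gradient convergence, on any compact $K\Subset\{u>0\}$ Hausdorff convergence embeds $K$ in $\{u_k>0\}$ for large $k$, where $u_k$ is harmonic; uniform convergence combined with interior gradient estimates for harmonic functions upgrades to locally uniform $C^1$ convergence $u_k\to u$, giving pointwise gradient convergence on $\{u>0\}$. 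On $\{u=0\}^\circ$ the same mechanism makes $u_k\equiv 0$ locally for large $k$, so gradients agree; the leftover set is $F(u)$, which has measure zero.

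Finally, I would verify that $u$ is itself a solution. In the variational case, the uniform Lipschitz bound plus a.e.\ gradient convergence upgrade by dominated convergence to $\nabla u_k\to\nabla u$ in $L^2_{\mathrm{loc}}$, which together with the indicator convergence allows direct passage to the limit in the Weiss identity of Definition~\ref{def:varsolution}. In the viscosity case, subharmonicity or superharmonicity on the positive phase passes to the limit by uniform convergence on compact subsets of $\{u>0\}$, and the slope condition at free boundary points is handled by the standard touching argument: a tangent ball $B$ for $u$ at $x_0\in F(u)$ admits slightly retracted balls $B'\subset B$ contained in the corresponding phase of $u_k$ for large $k$, nearby touching points $x_k\in F(u_k)$ are supplied by Hausdorff convergence of free boundaries, and the non-tangential asymptotic expansion for $u_k$ transfers to $u$. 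I expect the main obstacle to be this last step: one must track the asymptotic slope constant $\alpha$ of Definitions~\ref{def:visc.super}--\ref{def:visc.sub} through the limit without loss, which requires the Hausdorff convergence of phases and the uniform non-degeneracy to act in tandem so that the perturbation argument does not degrade the limiting slope.
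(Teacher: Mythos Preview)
The paper does not prove this proposition; it records it as ``well known'' and simply cites \cite[Lemma 1.21]{CafSalsa} and \cite[Proposition 4.2]{JK}. Your outline is a correct reconstruction of the standard argument those references contain (Hausdorff convergence from non-degeneracy, $|F(u)|=0$ via the density estimate, dominated convergence for the variational identity, and the perturbed-touching-ball argument for the viscosity condition), so there is nothing to compare.
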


Other auxiliary results we will need are the characterization of blow-ups
and blow-downs of our weak solutions. 
\begin{prop}[Characterization of blow-ups]\label{prop_blowups} Let $u$ be a variational solution of \eqref{FBP} in $\O\subseteq \R^2$, which is
Lipschitz-continuous and non-degenerate. Assume $0 \in F(u)$ and let
$v$ be any limit of a uniformly convergent on compacts subsequence
of
\begin{equation*}
    v_j(x) =  r_{j}^{-1}u(r_jx)
\end{equation*}
as $r_j \to 0$. Then, up to rotation, $v$ is either $P$ or $W_s$ for some $s$, $0 < s <\infty$, in an appropriately chosen Euclidean coordinate system. 

If, in addition, $u$ is a viscosity solution, then so is $v$ and, up to rotation, either $v = P$, or $v = W_s$ for some $0<s\leq 1$.
\end{prop}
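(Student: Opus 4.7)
The plan is four-step: (i) transfer uniform scale-invariant bounds from $u$ to the rescalings $v_j$ and pass to the limit, (ii) force $1$-homogeneity of the limit via Weiss's monotonicity formula, (iii) classify $1$-homogeneous variational solutions by reducing to an ODE on $S^1$, and (iv) pin down the admissible multiplicative constants using the integrated Euler--Lagrange identity of Definition \ref{def:varsolution} (plus Definition \ref{def:visc.super} in the viscosity case). For (i), $|\nabla v_j|(x) = |\nabla u|(r_j x)$ and $\sup_{B_r(y)} v_j = r_j^{-1}\sup_{B_{r_j r}(r_j y)} u$ transfer both the Lipschitz constant and the non-degeneracy constant from $u$ to the sequence $v_j$, so Proposition \ref{prop_limitofsolns} yields that $v$ is a global Lipschitz, non-degenerate variational (resp.\ viscosity) solution of \eqref{FBP} on $\R^2$ with $0 \in F(v)$.

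For step (ii), I would invoke the Weiss functional
\[
W(w, \rho) := \rho^{-2} \int_{B_\rho}\!\!\left(|\nabla w|^2 + 1_{\{w>0\}}\right) dx - \rho^{-3} \int_{\partial B_\rho}\!\! w^2 \, d\sigma,
\]
which is monotone non-decreasing in $\rho$ on variational solutions of \eqref{FBP} and constant in $\rho$ precisely on $1$-homogeneous ones. The scaling identity $W(v_j, \rho) = W(u, r_j \rho)$, together with the $L^1_{\mathrm{loc}}$ convergence $1_{\{v_j>0\}} \to 1_{\{v>0\}}$, the dominated convergence $\nabla v_j \to \nabla v$ in $L^2_{\mathrm{loc}}$ (from the uniform bound and a.e.\ convergence), and the uniform convergence of $v_j$ on $\partial B_\rho$, all provided by Proposition \ref{prop_limitofsolns}, yields $W(v, \rho) = \lim_{r \to 0^+} W(u, r)$ for every $\rho > 0$. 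The equality case in the monotonicity proof then forces $\partial_r v = v/r$, hence $v(r, \theta) = r f(\theta)$ for some Lipschitz $f \ge 0$ on $S^1$.

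For step (iii), harmonicity of $v$ on $\{v > 0\}$ reduces to the ODE $f'' + f = 0$ on each maximal open arc $I \subset S^1$ of $\{f > 0\}$, so $f = R \sin(\theta - \phi)$ on $I$ with $R > 0$. Non-degeneracy excludes $v \equiv 0$, and since $R \sin(\cdot - \phi)$ is never strictly positive on all of $S^1$, $f$ must vanish at both endpoints of $I$, forcing $|I| = \pi$. Two disjoint arcs of length $\pi$ in a circle of length $2\pi$ are necessarily complementary half-circles, so after a rotation $v = R x_1^+$ or $v = R x_1^+ + S x_1^-$ with $R, S > 0$.

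For step (iv), substituting either ansatz into the integrated identity of Definition \ref{def:varsolution} against a test field $\psi \in C_0^\infty(\R^2, \R^2)$ and integrating by parts over each half-plane reduces the bulk integral to a boundary integral over $\{x_1 = 0\}$, with coefficient $(R^2 - 1) \int_\R \psi_1(0, x_2)\, dx_2$ in the one-arc case and $(R^2 - S^2) \int_\R \psi_1(0, x_2)\, dx_2$ in the two-arc case. Arbitrariness of $\psi$ forces $R = 1$ (so $v = P$) or $R = S$ (so $v = W_s := s|x_1|$ with $s > 0$). In the viscosity case, an interior tangent ball to $\{W_s > 0\}$ at any free boundary point from either side exhibits the asymptotic slope $s$, so Definition \ref{def:visc.super} forces $s \le 1$; the subsolution condition is vacuous because $\{W_s = 0\}$ has empty interior. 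The principal obstacle is step (ii), the passage to the limit in Weiss's functional and the subsequent extraction of $1$-homogeneity from the equality case, which relies squarely on the convergence assertions of Proposition \ref{prop_limitofsolns}.
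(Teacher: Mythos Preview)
Your proposal is correct and follows essentially the same approach as the paper's proof: invoke Proposition \ref{prop_limitofsolns} to pass the variational/viscosity structure to the limit, apply Weiss monotonicity to get $1$-homogeneity, classify the resulting homogeneous solutions as $s_1 x_1^+ + s_2 x_1^-$, and then use the variational identity to force $s_1=1$ or $s_1=s_2$ (with the supersolution condition giving $s\le 1$). Your step (iii) spells out the ODE argument on $S^1$ that the paper leaves implicit, but otherwise the structure and key ingredients are identical.
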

\begin{proof}
As a consequence of Proposition \ref{prop_limitofsolns}, $v$ is a Lipschitz continuous, non-degenerate variational solution of \eqref{FBP}. Furthermore, the Weiss Monotonicity Formula \cite{Weiss1} implies that the blow-up $v$ is homogeneous of degree one. Thus, after possibly rotating the
coordinate axes
\begin{equation*}
    v(x) = s_1 x_1^+ + s_2 x_1^-,
\end{equation*}
where $s_1 \geq s_2 \geq 0$. We have the following two cases.

\paragraph{\textbf{Case 1} ($s_2 = 0$)} By non-degeneracy we must have $s_1 > 0$ and the variational solution condition \eqref{def:varsolution} implies that
\[
0 = L[v](\phi) = \int_{\{x_1>0\}} \left((s_1^2 + 1) \text{div } \phi - 2s_1^2 \de_{x_1} \phi_1 \right)dx =  
(s_1^2-1) \int_{-\infty}^\infty  \phi_1(0,x_2) \, dx_2
\]
for any $\phi = (\phi_1, \phi_2)\in C^1_c(\R^2;\R^2).$ Thus, $s_1 = 1$ and $v =P$.

\paragraph{\textbf{Case 2} ($s_2 >0$)} In this case the variational solution condition says
\begin{align*}
    0  = L[v](\phi) &= \int_{\{x_1>0\}} \left((s_1^2 + 1) \text{div} \phi - 2s_1^2 \de_{x_1} \phi_1 \right)dx + \int_{\{x_1<0\}} \left((s_2^2 + 1) \text{div} \phi - 2s_2^2 \de_{x_1} \phi_1 \right)dx \\
    & = (s_1^2 -s_2^2)\int_{-\infty}^\infty  \phi_1(0,x_2) \, dx_2
\end{align*}
for any $\phi = (\phi_1, \phi_2)\in C^1_c(\R^2;\R^2).$ Thus, $s_1 = s_2 = s>0$ and $v =W_s$. If, in addition, $u$ is a viscosity solution in $\O$, then by Proposition \ref{prop_limitofsolns}, $v=W_s$ is a non-degenerate viscosity solution, as well. Since every point of $ F(v) = \{x_1=0\}$ has a tangent disk from the positive phase of $v$ only, the viscosity supersolution condition implies $s\leq 1$.
\end{proof}

The following characterization of blow-down limits has a closely analogous proof.

\begin{prop}[Characterization of blow-downs]\label{prop_blowdowns} Let $u$ be a variational solution of \eqref{FBP} in all of $\R^2$, which is
Lipschitz-continuous and non-degenerate. Assume $0 \in F(u)$ and let
$w$ be any limit of a uniformly convergent on compacts subsequence
of
\begin{equation*}
   w_j(x) = R_{j}^{-1}u(R_jx)
\end{equation*}
as $R_j \to \infty$. Then, up to rotation, $w$ is either $P$ or $W_s$ for some $s>0$ in an appropriately chosen
Euclidean coordinate system. 

If, in addition, $u$ is a viscosity solution, then so is $w$ and, up to rotation, either $w= P$, or $w = W_s$ for some $0<s\leq 1$.
\end{prop}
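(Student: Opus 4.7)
My plan is to mirror the proof of Proposition \ref{prop_blowups} closely, since the scaling $w_j(x) = R_j^{-1} u(R_j x)$ with $R_j \to \infty$ preserves the structural ingredients used there: scale-invariance of \eqref{FBP}, uniform Lipschitz continuity and non-degeneracy, and the Weiss monotonicity formula. The only ingredient that requires genuine reinterpretation is the use of Weiss monotonicity at $\infty$ rather than at $0$.

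First I would observe that each $w_j$ is itself a variational solution of \eqref{FBP} on all of $\R^2$, with the same Lipschitz and non-degeneracy constants as $u$; in particular $0 \in F(w_j)$ for all $j$. Proposition \ref{prop_limitofsolns} then ensures that any subsequential uniform limit $w$ is a Lipschitz, non-degenerate variational solution on $\R^2$ with $0 \in F(w)$, together with the associated Hausdorff, $L^1_{\text{loc}}$, and a.e.\ convergence of the $w_j$, their positive-phase indicators, and their gradients. If in addition $u$ is a viscosity solution, each $w_j$ inherits that property (it is scale-invariant), and so does $w$.

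Next I would promote $w$ to a $1$-homogeneous function by means of the Weiss monotonicity formula \cite{Weiss1}. Because $u$ is defined on all of $\R^2$ and Lipschitz, the Weiss energy
\[
W(u, R) = \frac{1}{R^2} \int_{B_R} \left(|\nabla u|^2 + 1_{\{u > 0\}}\right) dx - \frac{1}{R^3} \int_{\partial B_R} u^2 \, d\mathcal{H}^1
\]
is nondecreasing in $R$ and bounded above by a constant depending only on the Lipschitz norm of $u$, so it has a finite limit $W_\infty$ as $R \to \infty$. The identity $W(w_j, \rho) = W(u, R_j \rho)$ together with the convergence statements from Proposition \ref{prop_limitofsolns} yields $W(w, \rho) \equiv W_\infty$ for every $\rho > 0$, which via the Weiss formula forces $w$ to be $1$-homogeneous. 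Combined with harmonicity on its positive phase and non-degeneracy, this shows that after a rotation $w(x) = s_1 x_1^+ + s_2 x_1^-$ with $s_1 \ge s_2 \ge 0$. I would then feed this ansatz into the integrated Euler--Lagrange identity of Definition \ref{def:varsolution} exactly as in the proof of Proposition \ref{prop_blowups}: the case $s_2 = 0$ forces $s_1 = 1$ (so $w = P$) by the same one-sided test-field computation, and the case $s_2 > 0$ forces $s_1 = s_2 = s > 0$ (so $w = W_s$). In the viscosity setting, at any point of $F(W_s) = \{x_1 = 0\}$ the positive phase supplies an interior tangent disk, so Definition \ref{def:visc.super} forces $s \le 1$.

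The step demanding the most care is the passage to $1$-homogeneity. For blow-ups, monotonicity of $W(u, r)$ together with the trivial lower bound $W(u, r) \ge 0$ makes the limit $W(u, 0^+)$ automatic; for blow-downs one instead needs the global Lipschitz bound on $u$ to keep $W(u, R)$ bounded above as $R \to \infty$ so that $W_\infty := \lim_{R\to\infty} W(u, R)$ exists and is finite. Once this is secured and the convergence $W(w_j, \rho) \to W(w, \rho)$ is justified from the strong convergence provided by Proposition \ref{prop_limitofsolns}, the remainder of the argument is identical to that of Proposition \ref{prop_blowups}.
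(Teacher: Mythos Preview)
Your proposal is correct and matches the paper's approach exactly: the paper itself does not write out a separate proof but simply remarks that the argument is ``closely analogous'' to that of Proposition~\ref{prop_blowups}, which is precisely what you have done. Your extra care in isolating the one genuine difference---using the global Lipschitz bound to secure an upper bound for $W(u,R)$ so that the Weiss energy has a finite limit as $R\to\infty$---is the right observation (the aside that $W(u,r)\ge 0$ in the blow-up case is not quite accurate, but it plays no role in the blow-down argument you are actually giving).
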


From Proposition \ref{prop_blowdowns}, any blow down limit of $U$,
\[
V(x) = \lim_{j\to \infty} (1/R_j) U(R_j x),
\]
is, after rigid motion, either a one-plane solution $P(x)= x_1^+$ or a wedge solution
$W_s(x) = s|x_1|$ for some $0 < s \le 1$.   In all cases,  the convergence
given by Proposition \ref{prop_limitofsolns} implies that
the free boundary $F(U)$ is asymptotically flat in the sense that
there exist $R_j\to \infty$ and $\d_j \to 0$ such that
\[
F(U)\cap B_{R_j}  \subset \{|x_1| \le \d_j R_j\}
\]
The next lemma gives a preliminary characterization of flat free boundaries
which we will use at large and small scales.


\begin{lemma}  \label{lem:flattypes}  
For any $\e>0$ there is $\d>0$ such the following holds.  
Suppose that $u$ is a  classical solution to the free boundary
problem \eqref{FBP}  in $B_3 \subset \R^2$ such that 
every component curve of $F(u)$ has both ends on $\de B_3$.
Suppose further that the Hausdorff distance from $F(u)$ to the vertical segment
$\{(0,x_2): |x_2| < 3\}$ is less than $\d$.  Then either

a)  $B_1^+(u)$ and $B_1^0(u)$ are both connected, and 
\[
B_1\cap F(u) = \{ x\in B_1: x_1= g(x_2)\}
\]
for some $g:[-1,1]\to \R$ satisfying $|\nabla^j g | \le \e$, $j=0, \, 1, \, 2, \, 3$;

b)  $B_1^+(u)$ has two components, $B_1^0(u)$ has one component, and
\[
B_1^0(u) = \{ x\in B_1: g_1(x_2) \le x_1 \le g_2(x_2)\}
\]
for some $g_i:[-1,1] \to \R$, satisfying $g_1(s) < g_2(s)$ and
$|\nabla^j g_i | \le \e$, $j=0, \, 1, \, 2, \, 3$;

c)  or $B_2^+(u)$ is connected, and $B_2^0(u)$ has two components,
one bounded by a simple curve both of whose ends belong to
$\a_-(2)$ and the other 
bounded by a simple curve both of whose ends belong to $\a_+(2)$,
where $\a_\pm(r)$ are the top and bottom circular arcs of 
 $\de B_r \cap \{|x_1| \le \d\}$.
\end{lemma}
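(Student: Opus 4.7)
The approach is compactness combined with Caffarelli's flatness-implies-smoothness theorem \cite{CafI}. Suppose for contradiction that the lemma fails for some $\epsilon > 0$: there exist $\delta_k \to 0$ and classical solutions $u_k$ in $B_3$ satisfying the hypotheses with $\delta = \delta_k$ but for which none of (a)--(c) hold. The uniform Lipschitz bound of Lemma~\ref{lem:Lipschitz1} and Arzel\`a--Ascoli produce a subsequence $u_k \to U$ uniformly on compact subsets of $B_3$. By Proposition~\ref{prop_limitofsolns}, $U$ is a Lipschitz, non-degenerate viscosity solution of \eqref{FBP} in $B_3$ and $F(u_k) \to F(U)$ locally in Hausdorff distance. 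Both directions of the Hausdorff hypothesis pass to the limit, forcing $F(U) = \{(0, x_2) : |x_2| < 3\}$. Since $\{U > 0\}$ is open with topological boundary in $B_3$ lying on $\{x_1 = 0\}$, and each open half-disk $B_3 \cap \{\pm x_1 > 0\}$ is connected, $\{U > 0\}$ is either one of the half-disks or their union. A Schwarz-reflection / Holmgren-uniqueness argument using $U = 0$ on $\{x_1 = 0\}$ and the viscosity identity $|\partial_1 U| = 1$ on the line (which forces $\partial_1 U \equiv \pm 1$ identically there) then determines $U \in \{x_1^+, (-x_1)^+, W_1\}$, where $W_1(x) = |x_1|$.

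In the half-plane cases $U = x_1^+$ or $U = (-x_1)^+$, the local phase structure at every $z \in F(u_k) \cap B_1$ is half-space-like for large $k$, so Caffarelli's flatness-implies-smoothness theorem applied on balls $B_{1/2}(z)$ produces a $C^{1,\alpha}$ graph $x_1 = g(x_2)$ with $\|g\|_{C^{1,\alpha}}$ arbitrarily small. Schauder bootstrap on the harmonic function $u_k$, after the straightening change of variable $y_1 = x_1 - g(x_2)$, upgrades this to $|\nabla^j g| \le \epsilon$ for $j \in \{0,1,2,3\}$. This is case (a), contradicting the assumed failure.

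In the wedge case $U = W_1$, the positive phase of $u_k$ lies on both sides of a thin strip $\{|x_1| < \eta_k\}$, $\eta_k \to 0$, which contains all of $F(u_k) \cap B_2$. The dichotomy between cases (b) and (c) is governed by the connectedness of $B_2^+(u_k)$. If $B_2^+(u_k)$ has two components, the zero phase is a single thin connected strip bounded by two smooth nearly-vertical graphs---flatness-regularity applies at every free-boundary point since the local phase structure is half-space-like with bulk positive phase on both outer sides of the strip---and Schauder bootstrap again yields case (b). If $B_2^+(u_k)$ is connected, the zero phase is a union of bulges in the strip; the global hypothesis (each $F(u_k)$-component has both ends on $\partial B_3$) forbids closed loops, so every bulge is attached to $\partial B_2$ through the strip and its boundary inside $B_2$ is a simple curve with both endpoints on $\alpha_+(2)$ or both on $\alpha_-(2)$. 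To force exactly one top-attached and one bottom-attached bulge (hence case (c)) requires three ingredients: phase consistency across the strip, which forbids a top-to-bottom $F(u_k)$-strand from coexisting with any bulge of the opposite attachment, since the implied alternation across successive graphs would assign inconsistent phases to the two sides of the bulge; non-degeneracy at unit scale (Lemma~\ref{lem:nondegeneracy1}), which rules out isolated thin positive regions of width $O(\delta_k)$ by the standard Lipschitz-times-width bound $u_k = O(\delta_k)$ throughout such a region; and the Hausdorff coverage of the full segment, which forces both a top-attached and a bottom-attached bulge to be present. The principal obstacle is this topological bookkeeping---ruling out hybrid configurations such as a strand coexisting with a one-sided bulge, multiple bulges attached to the same arc, or nested bulges---which requires carefully combining the applicability of flatness-regularity at ``side'' points of bulge boundaries (where local phases are half-space-like) with non-degeneracy at unit scale at ``tip'' points where flatness-regularity fails, and using the global endpoint constraint on $F(u_k)$ to match local $B_2$-level topology with the permissible global structure in $B_3$.
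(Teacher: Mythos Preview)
Your compactness approach is genuinely different from the paper's: the paper argues directly by case analysis on the topology of $F(u)\cap B_{4/3}$, and the engine is a \emph{flux balance} inequality proved on the spot (Lemma~\ref{lem:fluxbalance}): if $u$ is harmonic on a bounded domain $\Omega$ with $|\nabla u|\le L$ and $F\subset\partial\Omega$ is the portion where $\partial_\nu u=-1$, then $\mathcal H^1(F)\le L\,\mathcal H^1(\partial\Omega\setminus F)$. This yields an explicit $\delta$ (of order $1/L$) and dispatches the bad configurations---a thin positive channel between two free-boundary strands, extra strands entering $B_{5/4}$, multiple bulges on one side---in a line each.

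Your scheme has a real gap in the wedge case, beyond the bookkeeping you already flag. You rely on Lemma~\ref{lem:nondegeneracy1} to ``rule out isolated thin positive regions of width $O(\delta_k)$ by the standard Lipschitz-times-width bound,'' but that lemma only says $\sup_{B_r(z)}u_k\ge cr$ for $z\in F(u_k)$, and this supremum is happily attained in the bulk positive phase on either side of the strip, not in the thin region; no contradiction follows. The same defect blocks the exclusion of, say, two top-attached bulges separated by a thin positive channel that wraps around a tip to join the main phase: non-degeneracy plus phase-consistency does not touch this. What is actually needed is precisely the flux-balance inequality applied to the channel (free-boundary length $\gtrsim 1$, exit length $O(\delta_k)$), and that is the content of the paper's Cases~3--5. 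A minor separate point: in the two-sided limit you can only conclude $U=W_s$ for some $0<s\le 1$, not $s=1$, since with positive phase on both sides only the supersolution condition is available on $\{x_1=0\}$; this does not, however, affect the rest of your outline.
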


\begin{proof}  Case 1.  $F(u)\cap B_{4/3}$ is a simple curve connecting
$\a_-(4/3)$ to $\a_+(4/3)$.  Then the one-phase ``flat implies smooth"
free boundary regularity theorems of \cite{AC} imply that we are in case (a).  

\noi
Case 2.  $F(u)\cap  B_{4/3}$ 
consists of two simple curves connecting 
$\a_-(4/3)$ to $\a_+(4/3)$ with a zero phase in between.
Then the positive phase has two components.  Considering each of
them separately, the same estimates of \cite{AC} as in Case 1 show
that we are in case (b).

\noi
Case 3.  $F(u) \cap B_{4/3}$ consists of 
two simple curves $C_1$ and $C_2$ with positive phase 
in between them.   To rule out this possibility we first prove the following.
\begin{lemma}[Flux Balance] \label{lem:fluxbalance}  Let $\O$ be
a bounded, piecewise $C^1$ domain, and let $u$ be
a harmonic function in $\O$ that is $C^1$ up to the boundary
and such that $|\nabla u| \le L$.   Denote by $\nu$ the outer
unit normal to $\de \O$, and $F \subset \{x\in \de \O: \nu \cdot \nabla u = -1\}$.
Then
\[
\HH^1(F) \le L \, \HH^1((\de \O) \setminus F) \quad (\text{with $\HH^1$ equal to 1-dimensional Hausdorff measure}).
\]
\end{lemma}
\begin{proof} By the divergence theorem,
\[
0 = \int_{\O}  \mbox{\rm div}(\nabla u) \, dx = \int_{\de\O} \nu \cdot \nabla u \, d\HH^1
= -\HH^1(F) + \int_{(\de\O)\setminus F} \nu \cdot \nabla u \, d\HH^1
\]
The estimate follows from the bound $|\nu \cdot \nabla u| \le L$. 
\end{proof}

Returning to Case 3, let $\O$ be the subset of $B_{4/3}$ between
the two curves of $F(u)\cap  B_{4/3}$.   
The remainder of the boundary
is a subset of $\a_\pm(4/3)$.    By Lemma \ref{lem:Lipschitz1},
the function $u$ of Lemma \ref{lem:flattypes}
satisfies $|\nabla u | \le L$ on $B_{5/2}$ for some absolute constant $L$. 
By Lemma \ref{lem:fluxbalance}, 
\[
4 \le \HH^1 (C_1 \cup C_2) \le L \, \HH^1(\a_+(4/3) \cup \a_-(4/3)) \le 2 L\, \d.
\]
This is impossible if $\d$ is sufficiently small ($\d < 2/L$).

\begin{figure}[h]
\centering 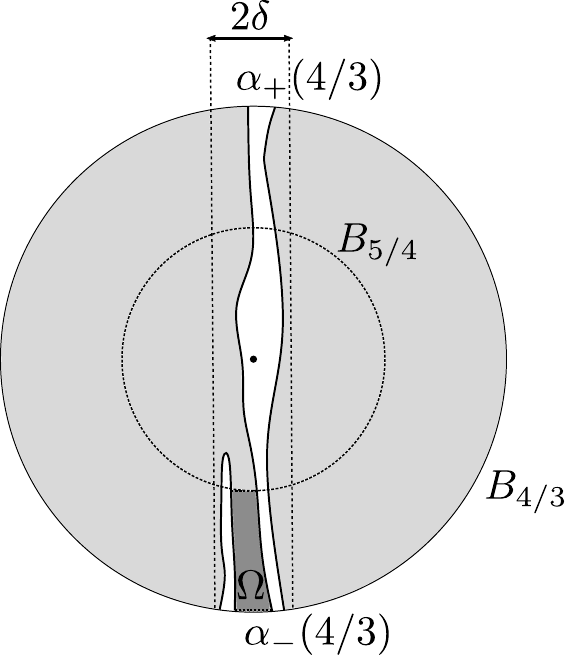 \caption{Illustrating $\O$ in Case 4 of the proof of Lemma \ref{lem:flattypes}, whose existence is ruled out.}
\label{Fig:lem:flattypes}
\end{figure}

\noi
Case 4.  $F(u) \cap B_{4/3}$ contains two simple curves that 
join $\a_+(4/3)$ to $\a_-(4/3)$.  In this case, we will show that (b) is valid
by showing although there may be additional curves of $F(u)$ in $B_{4/3}$ there
are none that penetrate closer to the origin than distance $5/4$.  
Suppose, to the contrary, there is another curve
of $F(u) \cap B_{4/3}$ that reaches $B_{5/4}$.   Then by hypothesis,
it must connect $\de B_{5/4}$ to one of $\a_\pm(4/3)$.  It follows
that there is a region $\O$ of positive phase in $(B_{4/3}\setminus B_{5/4})\cap \{|x_1| < \d\}$ of diameter at least $4/3 - 5/4 = 1/12$
bounded by two curves of $F(u)$ and two vertical segments of length
at most $\d$ (see Figure \ref{Fig:lem:flattypes}).  This contradicts Lemma \ref{lem:fluxbalance} if $\d < 1/12 L$. 
Therefore, there are exactly two curves of  $F(u)\cap B_{5/4}$
each of which has one end in $\a_+(5/4)$ and the other in $\a_-(5/4)$.
Thus, the method of Case 2 applied to $B_{5/4}$ instead of $B_{4/3}$
implies we are once again that we have property (b).

\noi
Case 5.  The argument in Case 4 rules out the possibility of more
than $2$ curves connecting $\a_+(4/3)$ to $\a_-(4/3)$, so it only remains to
consider the case in which there is no curve of $F(u)\cap B_{4/3}$ connecting $\a_+(4/3)$ to $\a_-(4/3)$.  In that case, we will show that (c) is satisfied.
Since $F(u)$ is within $\d$ Hausdorff distance of the segment 
$\{(0,x_2): |x_2| < 3\}$,  and every component of $F(u)$ has
its ends on $\de B_3$,  there are two simple curves $C_\pm \subset F(u)$
such that $C_\pm \cap B_{4/3}$ is nonempty, $C_+$  has both ends on 
$\a_+(3)$, and $C_-$ has both ends on $\a_-(3)$.   Let $p_\pm = (\pm 2, 0)$.
By a similar flux balance argument to Case 4, 
$F(u) \cap B_2 = (C_+\cup C_-) \cap B_2$.  Moreover,
$F(u) \cap B_{1/10}(p_\pm)= C_\pm \cap B_{1/10}(p_\pm)$  is the union of two graphs with $u=0$ in between,
the same as case (b) rescaled by $1/10$ and translated to be centered at $p_\pm$.
In particular, $C_+ \cap B_2$ is a simple curve with both ends on $\a_+(2)$ and
similarly for $C_- \cap B_2$, confirming that we are in case (c).  This completes
the proof of Lemma \ref{lem:flattypes}.
\end{proof}

The next proposition summarizes Propositions 7.1, 7.2 and 7.3 of \cite{JK}.
\begin{prop} \label{prop:Voutcomes}  If $V$ is a blow down limit of $U$, then
there are three possibilities, after rigid motion

a)  $V$ is a one plane solution, and $U = x_1^+$;

b)  $V = W_s$ for some $s$, $0 < s  \le 1$, and $U$ is a two plane solution
$TP_a$ for some $a>0$, or $U$ is a wedge solution $W_1$;

c)  $V = W_s$ for some $s$, $0 < s  \le 1$, and 
$U$ is a double hairpin solution.
\end{prop}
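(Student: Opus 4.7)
The plan is to combine the blow-down classification (Proposition \ref{prop_blowdowns}) with the topological trichotomy of Lemma \ref{lem:flattypes}, applied at the scales on which $U$ resembles $V$. By Proposition \ref{prop_blowdowns}, after a rotation $V$ is either $P$ or a wedge $W_s$ with $0<s\le 1$. Along the sequence $R_j\to\infty$ realizing the blow-down, set $U_j(x):=U(R_jx)/R_j$; Proposition \ref{prop_limitofsolns} gives $U_j\to V$ uniformly on compacts and $F(U_j)\to\{x_1=0\}$ in Hausdorff distance on compacts. Hence for all sufficiently large $j$ the hypothesis of Lemma \ref{lem:flattypes} is met by $U_j$ on $B_3$, and one of the alternatives (a), (b), (c) of that lemma must occur along a subsequence.

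I will first treat $V=P$. Since $P$ has connected positive phase and half-plane zero set, neither alternative (b) (a slab of zero phase between two strands) nor (c) (two separated zero components enclosing a positive strip) is compatible with $U_j\to P$, leaving only (a). Thus, after rescaling, $F(U)\cap B_{R_j}$ is a smooth graph with $C^3$ norm $\varepsilon_j\to 0$. Sending $R_j\to\infty$ produces a globally smooth, asymptotically flat graph for $F(U)$; a Bernstein-type rigidity argument for one-phase free boundaries (equivalently, iterated improvement of flatness in the style of \cite{AC}) forces this graph to be an affine line. Then the overdetermined Cauchy data $U=0$, $|\nabla U|=1$ on $F(U)$, together with harmonicity of $U$ in the positive phase, identify $U$ with $x_1^+$ after rotation, yielding alternative (a) of the proposition.

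For $V=W_s$ the zero set of $V$ is the line $\{x_1=0\}$, so alternative (a) of Lemma \ref{lem:flattypes} is excluded and, along a subsequence, case (b) or case (c) must occur. If case (b) occurs, then, after rescaling, $F(U)\cap B_{R_j}$ consists of two nearly parallel smooth graphs enclosing a zero-phase slab of some width $a_j$; the same rigidity applied strand-by-strand forces both pieces of $F(U)$ to be straight lines and the gap $a=\lim a_j\in[0,\infty)$ to be constant, giving $U=TP_a$ if $a>0$ and $U=W_1$ if $a=0$ — this is alternative (b). If instead case (c) occurs along a subsequence, then $F(U)$ consists of exactly two unbounded, simple, $C^\infty$ strands enclosing a single connected strip of positive phase with zero phase on both outer sides. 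The simple connectivity of $\{U>0\}$ (inherited from the hypothesis that each component of $F(u_k)$ has both ends on $\partial B_{R_k}$), together with this strand topology, matches the hypothesis of the Hauswirth--H\'elein--Pacard/Traizet classification of entire simply connected classical solutions, forcing $U=H_a$ with $a$ equal to the value of $U$ at its unique saddle point. This is alternative (c).

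The main obstacle lies in this last step, since \emph{a priori} $U$ is only a variational/viscosity limit and not manifestly a classical solution. To bring the two strands of $F(U)$ into the classical framework I will iterate the Alt--Caffarelli flatness-implies-smoothness theory starting from the $C^3$-flatness provided by Lemma \ref{lem:flattypes} at each large scale, promoting each strand to a global $C^\infty$ curve and $U$ to a classical solution up to them. An alternative, avoiding direct appeal to \cite{HHP,Traizet}, is to construct the Hauswirth--H\'elein--Pacard conformal map from $\{U>0\}$ onto the strip $\S$ directly, via the stream function conjugate to the harmonic $U$ as explained in Section 5. Either route requires care in propagating the local flatness of Lemma \ref{lem:flattypes} into an analytic identification with a single specific member of the hairpin family.
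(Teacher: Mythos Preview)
Your overall architecture --- split on the form of the blow-down $V$, then sort the wedge case by the topological alternative of Lemma~\ref{lem:flattypes} at large scales --- is different from the paper's, which instead hinges on an \emph{analytic} dichotomy: after a barrier argument giving $|\nabla U|\le 1$ on $\{U>0\}$, either $|\nabla U|$ attains the value $1$ somewhere (forcing $|\nabla U|\equiv 1$ on a component by subharmonicity, hence linearity and the two-plane/wedge conclusion), or $|\nabla U|<1$ strictly, in which case the Hopf lemma yields strictly positive curvature of $F(U)$ at every smooth point and one bootstraps to full regularity (Lemma~\ref{lem:almostsmooth}) before invoking Traizet. Your approach is not unreasonable for parts (a) and (b), but the treatment of alternative (c) has a genuine gap.

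The problem is the sentence ``iterate the Alt--Caffarelli flatness-implies-smoothness theory starting from the $C^3$-flatness provided by Lemma~\ref{lem:flattypes} at each large scale.'' Case (c) of Lemma~\ref{lem:flattypes} asserts only a \emph{topological} configuration: $B_2^0(u)$ has two components bounded by simple arcs with both ends on $\alpha_\pm(2)$. It gives no flatness for those arcs except very near the points $p_\pm=(\pm 2,0)$ (cf.\ Case~5 of the proof). In particular, near the ``neck'' of the hairpin configuration the free boundary is not flat at any scale, so Alt--Caffarelli improvement of flatness simply does not apply there, and you cannot promote the two strands of $F(U)$ to $C^\infty$ curves by this route. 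Consequently you have not shown $U$ is a classical solution, and neither the Traizet classification nor the conformal-map construction of Section~5 (both of which presuppose a classical solution with smooth free boundary) is available.

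What closes this gap in the paper is precisely the missing gradient estimate: majorizing $U$ by $W_1$ gives $|\nabla U|\le 1$; in the residual case $|\nabla U|<1$ strictly, the Hopf lemma forces strict convexity of each zero-phase component at smooth points, which both rules out tangential contact of the two strands and, combined with the flux-balance arguments, limits possible singularities of $F(U)$ to a single point, later removed. Without this ingredient (or a substitute for it), your route to alternative (c) does not go through. A smaller issue: Lemma~\ref{lem:flattypes} is stated for classical solutions whose free-boundary components end on $\partial B_3$, so it should be applied to rescalings of the approximating $u_k$, not directly to $U_j$; this is easily repaired but worth stating correctly.
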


\begin{proof}  For part (a) 
Now we consider each possible outcome for $V$. 
If, after rigid motion, $V$ is a one-plane solution $x_1^+$ , then
by Proposition 7.1 \cite{JK}, $U  = x_1^+$. Briefly,
for any $\e>0$ there is $R$ such that for all $r>R$, 
for sufficiently large $R$, $\{U>0\} \cap B_r \subset \{x_1 \ge - \d r\}$ 
and the fact that $U = x_1^+$ follows from the regularity theory of Alt 
and Caffarelli as in Lemma \ref{lem:flattypes} (a).

If, after rigid motion, $V = W_s$ for some $s$, $0 < s \le 1$,
then by Proposition 7.2 of \cite{JK}, either $U$ is a two-plane solution $TP_a$ for some $a> 0$, or $U$ is a wedge solution $W_1$, or $|\nabla U| < 1$ for all points of $\{U>0\}$.  We  
summarize the argument. A majorization of $U$ using $W_1$
\cite[Lemma 6.1]{JK} shows that $|\nabla U| \le 1$
on all of $\{U>0\}$.  If, in addition, there exists a point at which $|\nabla U| = 1$, 
then since $|\nabla U|^2$ is subharmonic, we have  $|\nabla U| \equiv 1$ on a component of $\{U>0\}$.   It follows that $U$ is linear on this component.  Hence the complementary positive component is contained in a half space.  This in turn implies
the other component of $\{U>0\}$ is a half space and $U$ must be a two-plane
solution $TP_a$ for some $a> 0$, or a wedge $W_1$.  

The remaining case is that $V = W_s$ but $|\nabla U| < 1$ at some point of every connected component of
$\{U>0\}$. In that case Proposition 7.3 of \cite{JK} shows that $U$ is a double hairpin solution. 
We sketch the proof here.

\begin{lemma}  \label{lem:almostsmooth} If $V= W_s$ for some $s$, $0< s \le 1$,
and $|\nabla U| < 1$ at some point of each component of its positive phase, then $F(U)$ consists
of finitely many disjoint curves that are smooth at all but one point.  Moreover,
near those smooth points, $U>0$ on one side and $U=0$ on the other, and
$F(U)$ has strictly positive curvature, i.~e., the set $U=0$ is locally convex.
\end{lemma}
\begin{proof}  Since the blow-up of $U$ is $W_s$, for any $\d>0$, there is a large $R<\infty$ 
such that after rotation,  $\tilde u_k(x) = Ru_k(x/R)$ satisfies the hypothesis of Lemma \ref{lem:flattypes}
for all sufficiently large $k$.   In cases, (a) and (b), the Lipschitz norm of the graph of the boundary, 
which is invariant under scaling, is bounded by $\e$.   It follows from the standard Alt-Caffarelli
regularity theory that $F(u_k)$ and $F(U)$ are smooth at all points of $B_R(0)$. Thus
the only case in which $F(U)$ can be non-smooth is the situation of part (c) of the 
lemma, i.~e., for sufficiently large $k$, the zero set of $u_k$ in $B_R$ has two connected 
(noncompact) components $B_R^0(u_k) =  \CC_1(k) \cup \CC_2(k)$.  

Next, suppose that there is a point $p\in F(U)$ at which the positive phase is not smooth 
from (at least) one side.   Then the same lemma and reasoning above 
shows that the approximations $u_k$ at small scale near $p$ 
cannot be of types (a) or (b).  Type (c) implies in particular 
that for all $r>0$ sufficiently small and $k$ sufficiently large
$B_r(p)\cap \{u_k=0\}$ has two exactly components.  
Moreover,  the flux balance implies that these two components must
equal $\CC_1(k) \cap B_r(p)$ and  $\CC_2(k) \cap B_r(p)$,
with $\CC_1(k)$ and $\CC_2(k)$ the components at the larger scale $R$. 
Now suppose there are two points $p_i$, $i = 1, \, 2$, at which $F(U)$ is non-smooth.
We derive a contradiction as follows.  Set $S_i= S_i(k)$ equal to the shortest segment 
 joining $\CC_1(k) \cap B_r(p_i)$ to $\CC_2(k)\cap B_r(p_i)$. The segments $S_1$,
 $S_2$, and $F(u_k)$ enclose a component of $B_R^+(u_k)$
 of diameter at least $|p_1 - p_2|/2$.  On the other hand the lengths
 of $S_i$ are at most $2r$.   So this violates the flux balance lemma. 
 
 We have now shown that all but one point of $F(U)$ consists of smooth curves.
We now wish to show that  at each of these points $F(U)$ consists of one 
smooth curve with positive phase on one side and zero phase on the other.
What we have to rule out is the possibility of two smooth curves the become
tangent at some  points with positive phase in both directions normal to the curves 
at the tangent points and zero phase in between.  We will rule this out using curvature of 
the free boundary.

Recall that $|\nabla U| < 1$ at some point of each component of $\{U>0\}$.  Suppose
that $p$ is a smooth boundary point of this component.   Since $|\nabla U|^2\le 1$ is subharmonic
and achieves the value $1$ at $p$, the Hopf lemma implies that the outer
normal derivative $\de_\nu |\nabla U|^2 >0$ at $p$.  It follows that the curvature
of $F(U)$ is strictly positive at these points.    If there were another positive component
touching at $p$ from the other side, then it would also be bounded by a curve with positive curvature, 
which is impossible.   In other words,
$U$ must be zero on the other side of $F(U)$ from $\{U>0\}$ at $p$.
This concludes the proof of Lemma \ref{lem:almostsmooth} 
\end{proof}

Finally, we conclude the proof of Theorem \ref{thm:mainglobal}.    
Consider any connected component $\CC$ of $U=0$.  We
have just shown that $\CC$ is locally convex except possibly at one
point $p$, and bounded by two smooth curves of strictly positive curvature,
except possibly at the point $p$.   Lemma 7.6 of \cite{JK} says that for
sufficiently small $r>0$, $B_r(p)\cap \CC$ contains a nontrivial sector.
It follows from Proposition \ref{prop_blowups} that the blow up of $U$ 
at $p$ is the half-plane solution.  Thus $F(U)$ is regular at every point,
and $U$ is a classical solution.   Last of all, the only classical, entire solutions
with simply connected positive phase besides the half-plane are double  hairpin solutions
(see \cite{Traizet}).

\end{proof}


\section{A quantitative removable singularities theorem for free boundaries}

In this section we will give a proof of the effective removable singularity Theorem \ref{thm:removable}. The theorem concerns classical solutions $u$ of \eqref{FBP} in the plane defined on annuli $B_{R_2}\setminus B_{R_1}$, whose free boundary satisfies the following assumption:
\[
\tag{A} 
\begin{array}{c}
F(u) \text{ consists of two connected components, each of which has} \\ \text{one end in } \de B_{R_1} \text{ and one end in } \de B_{R_2}.
\end{array}
\]

As in Lemma \ref{lem:Lipschitz1} such solutions $u$ satisfy a universal Lipschitz bound.

\begin{lemma}[Lipschitz bound]\label{lem:Lipschitz2}
Let $u$ be a classical solution of \eqref{FBP} in $B_1\setminus B_{\d}$, satisfying (A). Then
\begin{equation*}
    |\nabla u|(x)\leq C \quad \text{for all } x\in B_{1/2}\setminus B_{2\d}
\end{equation*}
for some numerical constant $C>0$. 
\end{lemma}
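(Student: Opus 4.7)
The plan is to follow the template of Lemma~\ref{lem:Lipschitz1}, whose interior Lipschitz bound ultimately comes from Hopf's lemma at a free boundary point, combined with the Harnack inequality and the interior gradient estimate for harmonic functions. The new feature here is that one must check the argument goes through uniformly as $\delta \to 0$.

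For $x_0 \in B_{1/2} \setminus B_{2\delta}$ with $u(x_0) > 0$, let $d = d(x_0, F(u))$ and choose a nearest free boundary point $y_0$. The ball $B_d(x_0)$ is contained in $\{u > 0\}$, so $u$ is harmonic there and vanishes at $y_0 \in \partial B_d(x_0)$. When $B_d(x_0) \subset B_1 \setminus B_\delta$, which is automatic as soon as $d \le \min(|x_0|-\delta,\,1-|x_0|)$, I would apply Hopf's lemma at $y_0$ with this interior tangent ball to get
\[
1 \;=\; |\nabla u(y_0)| \;\ge\; c_1\, \frac{u(x_0)}{d}
\]
for a universal $c_1 > 0$. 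This forces $u(x_0) \le d/c_1$, and combined with the Harnack inequality $\sup_{B_{d/2}(x_0)} u \le c_2\, u(x_0)$ and the interior gradient estimate $|\nabla u(x_0)| \le (c_3/d)\sup_{B_{d/2}(x_0)} u$, I obtain $|\nabla u(x_0)| \le c_2 c_3/c_1$, a universal constant.

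When $d$ exceeds $\min(|x_0|-\delta, 1-|x_0|)$, the Hopf ball spills out of the domain and the argument above breaks. To handle this case, I would replace $B_d(x_0)$ by a maximal ball $B_\rho(z) \subset \{u>0\} \cap (B_1 \setminus B_\delta)$ tangent to $F(u)$ at some $y \in \partial B_\rho(z)$. Assumption~(A) enters decisively here: because the two strands of $F(u)$ connect $\partial B_\delta$ to $\partial B_1$, they cross every intermediate circle $\partial B_r$, $r \in (\delta, 1)$, which ensures that inside the positive-phase component containing $x_0$ there is always such an interior tangent ball of radius $\rho$ comparable to the natural local scale $\min(|x_0|-\delta, 1-|x_0|)$. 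Hopf applied at $(z, y)$ gives $|\nabla u(z)| \le C$ by the preceding argument, and I would transport this bound to $x_0$ by a Harnack chain inside the positive-phase component.

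The hard part will be bounding the length of the Harnack chain uniformly in $\delta$, so that the final constant does not blow up as $\delta \to 0$. Assumption~(A), together with the flux-balance Lemma~\ref{lem:fluxbalance}, should rule out the pathological configurations: the flux of $\nabla u$ across any arc of $\partial B_r \cap \{u>0\}$ is controlled by the $\HH^1$-length of the piece of $F(u)$ bordering that arc, and (A) bounds the latter at each intermediate radius. This yields a uniform bound on the aspect ratio of the positive-phase component on the natural local scale, and hence a Harnack chain from $z$ to $x_0$ of length $O(1)$, completing the proof.
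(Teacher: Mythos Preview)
Your overall strategy --- Hopf's lemma at a tangent free boundary point, followed by a Harnack chain and the interior gradient estimate --- matches the paper's, and you correctly observe that assumption (A) forces $F(u)$ to cross every circle $\partial B_r$ with $\delta<r<1$. The gap is in your final paragraph, where you try to bound the Harnack chain length.

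Your appeal to the flux-balance Lemma~\ref{lem:fluxbalance} is circular: that lemma takes $|\nabla u|\le L$ as a \emph{hypothesis}, which is exactly what you are trying to prove. Even if you mean only the raw divergence-theorem identity $\int_\alpha \partial_\nu u = \HH^1(\gamma)$ on a subregion bounded by an arc $\alpha\subset\partial B_r$ and a free-boundary piece $\gamma$, the claim that ``(A) bounds the latter at each intermediate radius'' is unjustified --- (A) fixes only the number of components of $F(u)$, not the length of any piece of it, and the two strands may oscillate wildly. Nothing in this paragraph produces a pointwise bound or the asserted aspect-ratio control on the positive phase.

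The paper's argument is simpler and avoids flux balance entirely. If the maximal disk $D^M(x_0)\subset (B_1\setminus B_\delta)^+$ fails to touch $F(u)$, then (since $x_0\in B_{1/2}\setminus B_{2\delta}$ gives $|x_0|-\delta<1-|x_0|$) it must touch $\partial B_\delta$, so its radius is $r=|x_0|-\delta\ge |x_0|/2$. Now build the Harnack chain \emph{along the circle} $\partial B_{|x_0|}$ in steps of size $r/2$: the circumference $2\pi|x_0|\le 4\pi r$, so a universally bounded number of steps suffices to traverse the entire circle. Because (A) guarantees $F(u)\cap\partial B_{|x_0|}\neq\emptyset$, some $x_l$ along this chain has $D^M(x_l)$ touching $F(u)$ with radius $\rho\le r$; Hopf then gives $u(x_l)\le c\rho\le cr$, Harnack transports this to $u(x_0)\le c'r$, and the interior gradient estimate in $B_{r/2}(x_0)$ finishes.
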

\begin{proof}
Let $x \in \left(B_{1/2}\setminus B_{2\d}\right)^+$. Denote by $D^M(x)$ the largest disk centered at $x$ and contained in the positive phase $(B_1\setminus B_{\d})^+$. If $D^M(x) = B_r(x)$ touches $F(u)$, then by the standard argument via the Hopf Lemma (\cite{AC}) one can show that $u(x)\leq c r$, so that by the Harnack inequality and the classical gradient estimate we obtain $|\nabla u (x)| \leq C$. If not, then $D^M(x)$ touches $\de B_{\d}$, and we can iteratively build a finite Harnack chain $\{B_{2r/3}(x_{i})\}_{i=1}^l$ with $$x_1=x, \quad|x_{i}| = |x| \quad \text{and} \quad  x_{i+1}\in \de B_{r/2}(x_{i})$$
with length $l$ universal, for which eventually $D^M(x_l)=B_{\rho}(x_l)$ touches $F(u)$. By the argument above $u(x_l)\leq c \rho$ and since $\rho \leq r$, we have
\[
u(x) \leq c' u(x_l) \leq c'c \rho \leq c'c r,
\]
whence the Lipschitz bound follows once again.
\end{proof}

Next, we will show that $u$ also has non-degenerate growth away from the free boundary as well as from the center of the annulus. 
\begin{lemma}[Non-degeneracy]\label{lem:non-degeneracy2}
Let $u$ be a classical solution of \eqref{FBP} in $A=B_1\setminus B_{\d}$, satisfying condition (A). If $x_0\in F(u)$ and $B_{\rho}(x_0)$ is the largest disk contained in $A^+(u)$, then
\begin{equation}\label{eq:nondeg1}
    \sup_{B_r(x_0)} u  = \max_{\de B_r(x_0)} u \geq \frac{1}{4\pi}r \quad \text{for all} \quad 0<r< \rho.
\end{equation}
Furthermore, there exist absolute positive constants $c, \tilde{c}$ such that 
\begin{equation}\label{eq:nondeg2}
    \max_{\de B_r} u \geq c r \quad \text{for all} \quad \tilde{c} \d \log(1/\d)<r<1.
\end{equation}
\end{lemma}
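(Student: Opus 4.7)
The interior estimate \eqref{eq:nondeg1} is the standard non-degeneracy bound in the spirit of Alt--Caffarelli \cite{AC}: since $B_\rho(x_0)$ is the largest disk lying in the positive phase, one compares $u$ with the harmonic replacement in $B_r(x_0)\cap A^+(u)$ and uses the boundary condition $|\nabla u|=1$ on $F(u)$ together with the 2D Green's identity to obtain $\sup_{B_r(x_0)} u \geq r/(4\pi)$ for $0<r<\rho$.

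The more substantial estimate \eqref{eq:nondeg2} I would prove via a \emph{flux identity and radial integration} argument that relies crucially on the topological hypothesis (A). The starting point is that, because each of the two components $\gamma_1, \gamma_2$ of $F(u)$ connects $\partial B_\delta$ to $\partial B_1$, each crosses every circle $\partial B_r$ for $\delta < r < 1$; in particular, taking $r_0 := 2\delta$,
\[
\mathcal{H}^1\!\bigl(F(u)\cap (B_r\setminus B_{r_0})\bigr) \;\geq\; 2(r-r_0).
\]
Applying the divergence theorem to $u$, harmonic on $(B_r\setminus B_{r_0})\cap A^+(u)$, and using $\partial_\nu u = -1$ on $F(u)$ yields
\[
\int_{\partial B_r\cap A^+(u)} \partial_r u\, dS \;=\; \mathcal{H}^1\!\bigl(F(u)\cap (B_r\setminus B_{r_0})\bigr) + \int_{\partial B_{r_0}\cap A^+(u)} \partial_r u\, dS.
\]
The last term is bounded in absolute value by $4\pi C\delta$ using Lemma \ref{lem:Lipschitz2} (which applies on $\partial B_{r_0}\subset \overline{B_{1/2}\setminus B_{2\delta}}$), so combining with the length lower bound,
\[
\int_{\partial B_r\cap A^+(u)} \partial_r u\, dS \;\geq\; 2r - C'\delta
\]
for a universal constant $C'$.

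Now introduce the circular average $\bar u(r):= (2\pi)^{-1}\int_0^{2\pi} u(re^{i\theta})\, d\theta$. Since $u\equiv 0$ on $A^0(u)$, we have $2\pi r\,\bar u'(r) = \int_{\partial B_r\cap A^+(u)} \partial_r u\, dS$, hence
\[
\bar u'(r) \;\geq\; \frac{1}{\pi} - \frac{C'\delta}{2\pi r}.
\]
Integrating from $r_0=2\delta$ to $r$ and using $\bar u(r_0)\geq 0$ yields
\[
\bar u(r) \;\geq\; \frac{r - 2\delta}{\pi} - \frac{C'\delta}{2\pi}\log\!\frac{r}{2\delta}.
\]
For $r\leq 1$ one has $\log(r/(2\delta)) \leq \log(1/\delta)$, so choosing $\tilde c$ sufficiently large makes the right side exceed $r/(2\pi)$ once $r\geq \tilde c\,\delta\log(1/\delta)$. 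Since $\max_{\partial B_r} u\geq \bar u(r)$, this establishes \eqref{eq:nondeg2} with $c = 1/(2\pi)$.

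The main subtlety---and the source of the logarithmic threshold---is handling the inner-boundary flux contribution: because Lemma \ref{lem:Lipschitz2} only furnishes a Lipschitz bound down to scale $2\delta$, the resulting error $\delta/r$ integrates to $\delta\log(r/\delta)$, and one needs $r\gtrsim \delta\log(1/\delta)$ in order to absorb it into the main term. Were a Lipschitz bound available all the way to $\partial B_\delta$, the threshold could presumably be improved to $O(\delta)$.
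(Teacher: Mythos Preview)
Your proof is correct and proceeds along essentially the same lines as the paper's. The only cosmetic difference is the packaging: the paper applies Green's formula with the logarithmic Green's function $G_r(x)=\frac{1}{2\pi}\log(|x|/r)$ in $(B_r\setminus B_{2\delta})^+(u)$ to obtain the circular average $\dashint_{\partial B_r} u$ in one stroke, whereas you apply the divergence theorem to $\nabla u$ to get the flux $\int_{\partial B_r\cap A^+(u)}\partial_r u$ and then integrate radially. These are equivalent (the Green's function is the radial primitive of the flux kernel), and both rest on exactly the same two inputs: the topological length bound $\mathcal{H}^1\bigl(F(u)\cap(B_r\setminus B_{2\delta})\bigr)\ge 2(r-2\delta)$ coming from assumption~(A), and the Lipschitz control on $\partial B_{2\delta}$ from Lemma~\ref{lem:Lipschitz2}, which in either version produces the $\delta\log(1/\delta)$ error and hence the threshold in~\eqref{eq:nondeg2}.
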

\begin{proof}
The first part \eqref{eq:nondeg1} of this lemma follows as in \cite[Proposition 3.4]{JK}. For the second part \eqref{eq:nondeg2}, we use the same idea. 
Define
$$G_r(x) := \frac{1}{2\pi} \log(|x|/r).$$
to be the Green's function of $B_r$ with pole at the orign. Noting that 
$u = 0$ and $u_{\nu} = -1$ on $F(u)$, we apply Green's formula in $D:=(B_r\setminus B_{2\d})^+$
\begin{align*}
0 &=\int_{D} \D G_r u - G_r \D u ~dx =  \int_{\de D} (G_r)_{\nu} u - G_r u_{\nu} ~d\mathcal{H}^{1} = \\
&=\dashint_{\de B_r} u ~d\mathcal{H}^{1} + \int_{\de D \cap F(u)} \frac{1}{2\pi} \log (|x|/r)  ~d\mathcal{H}^{1} - \int_{\de B_{2\d}} \left( \frac{u}{4\pi \d} + \frac{u_{\nu}}{2\pi} \log\frac{2\d}{r}\right)d\mathcal{H}^{1}.
\end{align*}
By Lemma \ref{lem:Lipschitz2} we know that $u$ satisfies a universal Lipschitz bound in $B_{1/2}\setminus B_{2\d}$:
\[
|\nabla u| \leq C \quad \text{and thus} \quad u\leq 2C\d \quad \text{on} \quad  \de B_{2\d}.
\]
Therefore, 
\begin{align}\label{lem:non-degeneracy2_ineq}
 \dashint_{\de B_r} u  ~d\mathcal{H}^{1} \geq  \int_{\de D \cap F(u)} \frac{1}{2\pi} \log (r/|x|)  ~d\mathcal{H}^{1} - c'\d \log(1/\d).
\end{align}
Since 
\[
\log (r/|x|) \geq \log 2 \quad \text{on} \quad \de D\cap F(u) \cap B_{r/2} \quad 
\]
and given that $\de D\cap F(u) \cap B_{r/2}$ contains at least two curves that connect $\de B_{2\d}$ to $\de B_{r/2}$ implying
\[
\mathcal{H}^{1} (\de D\cap F(u) \cap B_{r/2}) \geq 2 (r/2 - 2\d) = r - 4\d,
\]
we see from \eqref{lem:non-degeneracy2_ineq} that 
\[
 \dashint_{\de B_r} u  ~d\mathcal{H}^{1} \geq  \int_{\de D \cap F(u) \cap B_{r/2}} \frac{1}{2\pi} \log (r/|x|)  ~d\mathcal{H}^{1} - c'\d \log(1/\d)\geq  \frac{\log 2}{2\pi} r -  c''\d \log(1/\d).
\]
Hence,
\[
 \max_{\de B_r} u  \geq \dashint_{\de B_r} u  ~d\mathcal{H}^{1} \geq cr \quad \text{when} \quad 1>r>\tilde{c} \d \log(1/\d).
\]
\end{proof}
 
We shall now prove the following key theorem identifying limits of sequences of solutions satisfying the assumption (A) as the inner radius goes to zero and the outer increases to infinity.

\begin{theorem}\label{keyprop} Let $\{u_k\}$ be a sequence of classical solutions of \eqref{FBP} in $B_{R_k}\setminus B_{\d_k}$, satisfying assumption (A), where $R_k \nearrow \infty$ and $\d_k\searrow 0$. Then a subsequence converges uniformly on compact subsets of $\R^2\setminus \{0\}$ to, up to rotation, either the one-plane solution $P$, the two-plane solution $TP_a$ for some $a> 0$, or the wedge $W_1$.
\end{theorem}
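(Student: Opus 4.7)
The plan is to mirror the proof of the global classification Theorem~\ref{thm:mainglobal}, upgraded to the punctured domain $\R^2\setminus\{0\}$ by a removable singularity argument, and then supplemented by a topological exclusion of the double hairpin that uses assumption~(A) in an essential way. Lemma~\ref{lem:Lipschitz2} provides a uniform bound $|\nabla u_k|\le C$ on any compact $K\subset \R^2\setminus\{0\}$ as soon as $K\subset B_{R_k/2}\setminus B_{2\d_k}$, and since $u_k$ vanishes on its free boundary — which, by assumption~(A), meets every circle $\de B_r$ with $\d_k<r<R_k$ — one also has uniform $L^\infty$ bounds on $K$. Arzel\`a--Ascoli and a diagonal extraction yield a subsequence converging locally uniformly on $\R^2\setminus\{0\}$ to a Lipschitz, non-negative function $U$, and Proposition~\ref{prop_limitofsolns}, applied on each compact subset, shows $U$ is a non-degenerate variational and viscosity solution of \eqref{FBP} on $\R^2\setminus\{0\}$. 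Passing~\eqref{eq:nondeg2} to the limit $\d_k\to 0$ gives $\sup_{\de B_r}U\ge cr$ for every $r>0$, so $0$ is not an interior point of $\{U=0\}$, and the uniform Lipschitz bound extends $U$ continuously (indeed Lipschitz) to all of $\R^2$.

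The next step is to promote $U$ to a weak solution on all of $\R^2$. Since a point has vanishing $H^1$-capacity in dimension two, take a logarithmic cutoff $\eta_\e\in C_c^\infty(\R^2)$ equal to $0$ on $B_{\e^2}$ and to $1$ outside $B_\e$, with $\|\nabla\eta_\e\|_{L^1}\to 0$, and test the identity of Definition~\ref{def:varsolution} against $\eta_\e\psi$ for arbitrary $\psi\in C_c^\infty(\R^2;\R^2)$. The Lipschitz bound $|\nabla U|\le C$ controls every error term of the form $\int\nabla\eta_\e\cdot(\cdots)$ by $O(\|\nabla\eta_\e\|_{L^1})\to 0$, so the identity passes across $0$ and $U$ is a variational solution on all of $\R^2$; the viscosity conditions are inherited by the same kind of cutoff argument. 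Now the global analysis of~\cite{JK} applies: Proposition~\ref{prop_blowdowns} identifies any blow-down $V$ of $U$, up to rotation, as $P$ or $W_s$ for some $0<s\le 1$, and Proposition~\ref{prop:Voutcomes} then forces $U$, up to rigid motion, to be $P$, a two-plane solution $TP_a$ with $a>0$, the wedge $W_1$, or a double hairpin $H_a$ with $a>0$.

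It remains to exclude the hairpin. If $U=H_a$ for some $a>0$, then $0$ lies in the open positive phase $\O_a=\{H_a>0\}$, so by continuity of the extension there exist $\e>0$ and $c_0>0$ with $U\ge c_0$ on the closed annulus $A_\e:=\{\e/2\le |x|\le \e\}$. Uniform convergence on $A_\e$ forces $u_k\ge c_0/2>0$ there for all large $k$, so $F(u_k)\cap A_\e=\emptyset$. But for $k$ so large that $\d_k<\e/2$ and $R_k>\e$, each of the two free boundary components of $u_k$ joins $\de B_{\d_k}\subset B_{\e/2}$ to $\de B_{R_k}\subset \R^2\setminus \overline{B_\e}$ and must therefore traverse $A_\e$, a contradiction. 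Hence $U\in\{P, TP_a, W_1\}$ up to rigid motion. I expect the removable singularity passage to be the main technical obstacle, requiring a careful cutoff argument to move both the variational identity and the viscosity conditions across the puncture; the hairpin exclusion, by contrast, is very short, but it is the step in which the annular topology of assumption~(A) plays its decisive role.
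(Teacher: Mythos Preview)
Your hairpin exclusion is the real gap. Your own setup forces $0\in F(U)$: by assumption~(A) each circle $\de B_r$ with $\d_k<r<R_k$ meets $F(u_k)$, so the Lipschitz bound gives $\sup_{\de B_r}u_k\le Cr$, and in the limit $U(0)=0$; together with the non-degeneracy $\sup_{\de B_r}U\ge cr$ this places $0$ on the free boundary of $U$. Consequently, if $U$ is a rigid motion of $H_a$, the origin lands on one of the two catenaries, \emph{not} in the open positive phase $\O_a$, and there is no annulus $A_\e$ on which $U\ge c_0>0$. Your contradiction evaporates. The paper excludes the hairpin by an entirely different mechanism: once $F(u_k)\to F(U)$ locally in Hausdorff distance, one finds a short segment $\beta\subset\{u_k>0\}$ joining the two pieces of $F(u_k)$ that shadow the two catenaries, and the region bounded by $\beta$, an arc of $F(u_k)$, and a small circle around the puncture violates the flux-balance Lemma~\ref{lem:fluxbalance}.

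Two softer issues. First, ``the viscosity conditions are inherited by the same kind of cutoff argument'' does not work: the viscosity test is pointwise at $0$, not a distributional identity, so a capacity cutoff says nothing about it. The paper verifies the super- and subsolution conditions at the origin by hand, using the blow-up characterization (Proposition~\ref{prop_blowups}) and a barrier/perturbation argument, and this step is not short. Second, Proposition~\ref{prop:Voutcomes} is formulated for limits of classical solutions on full disks with simply connected positive phase; its part~(c), which identifies the residual case as a hairpin, goes through Lemma~\ref{lem:almostsmooth} and genuinely uses that approximating sequence. Your $U$ comes from solutions on annuli, so you cannot quote the proposition as a black box; the paper re-runs the smoothness-of-$F(U)$ argument in the annular setting (again via flux balance) before it can invoke the global classification of classical solutions.
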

\begin{proof}
From Lemma \ref{lem:Lipschitz2} we see that the family $\{u_k\}$ is uniformly Lipschitz continuous on any compact subset of $\R^2\setminus \{0\}$, thus a subsequence converges uniformly to some Lipschitz continuous function $u: \R^2\setminus \{0\} \to \R$. In the next several steps, we shall prove that $u$ could only be one of the possibilities stated. 

\emph{Step 1.} Let us first argue that $0$ is a removable singularity of $u$:
\[
\lim_{x\to 0} u(x) = 0.
\]
Indeed, the universal Lipschitz bound in $B_{R_{k/2}}\setminus B_{\d_k}$ and the fact that $F(u_k)$ intersects $B_{\d}$ for any fixed $\d>0$, imply
\[
0\leq u_k(x) \leq C \d \quad \text{for all}\quad x\in \de B_{\d},
\]
whence $0 \leq u \leq C\d$ on $\de B_{\d}$, so that  $\lim_{x\to 0} u(x)$ exists and is $0$. We may now extend $u$ as a Lipschitz continuous function on all of $\R^2$. Moreover, the origin $0\in F(u)$, for if some disk $B_{2\d}(0)\subseteq \{u = 0\}$, then $\max_{\de B_{\d}}u_{k}\to 0$ as $k\to \infty$ which would contradict the universal non-degeneracy bound \eqref{eq:nondeg2} of Lemma \ref{lem:non-degeneracy2}:
\[
\max_{\de B_{\d}(0)} u_k \geq c \d \quad \text{given that} \quad  \d > \tilde{c}\d_k \log(1/\d_k).
\]

\emph{Step 2.} We will show that $u$ is a variational solution in all of $\R^2$. Let $\phi \in C^{\infty}_0(\R^2; \R^2)$. Denote for $\d >0$
\[
 L_{\d}[u](\phi) := \int_{\R^2 \setminus B_{\d}} (|\nabla u|^2 + \chrc{u>0}) \text{div}\phi - 2 \nabla u D\phi (\nabla u)^T~dx.
\]
In order to verify the Definition \ref{def:varsolution}, it suffices to show
\begin{equation}\label{keyprop_eq1}
\lim_{\d \to 0} L_{\d}[u](\phi) = 0.
\end{equation}
Applying Proposition \ref{prop_limitofsolns} in $\O=(B_{\d})^c$ we see that $\chrc{\{u_k>0\}}\to \chrc{\{u>0\}}$ in $L^1_{\text{loc}}((B_{\d})^c)$ and $\nabla u_k \to \nabla u$ a.e. in $(B_{\d})^c$ with $|\nabla u_k| \leq C $, so that by the Dominated Convergence Theorem 
\begin{equation}\label{keyprop_eq2}
L_{\d}[u](\phi) = \lim_{k\to \infty} L_{\d}[u_k](\phi).
\end{equation}
Now taking into account that $u_k$ is a classical solution and that 
\[
 (|\nabla u_k|^2 + \chrc{u_k>0}) \text{div}\phi - 2 \nabla u_k D\phi (\nabla u_k)^T = \text{div}\left((|\nabla u_k|^2+1)\phi -2(\phi \cdot \nabla u_k)\nabla u_k \right) \quad \text{in } \{u_k>0\}
\]
we obtain from the Divergence Theorem and the Lipschitz bound $|\nabla u_k| \leq C$ on $\de B_{\d}$:
\begin{align}\label{keyprop_eq3}
& \int_{\R^2 \setminus B_{\d}} (|\nabla u_k|^2 + \chrc{u_k>0}) \text{div}\phi - 2 \nabla u_k D\phi (\nabla u_k)^T~dx =  \int_{F(u_k) \setminus B_{\d}} \left((1+1)\phi\cdot \nu -2\phi \cdot \nu \right) d\mathcal{H}^1 + \notag \\
 & +  \int_{\de B_{\d}\cap\{u_k>0\}} \left((|\nabla u_k|^2+1)\phi\cdot \nu  -2(\phi \cdot \nabla u_k)\nabla u_k\cdot\nu \right) d\mathcal{H}^1 = O(\d \|\phi\|_{L^{\infty}}).
\end{align}
Putting together \eqref{keyprop_eq2} and \eqref{keyprop_eq3}, we establish \eqref{keyprop_eq1}. 

\emph{Step 3.} Let us show that $u$ is non-degenerate. First, we claim that $u$ has non-degenerate growth away from the origin:
\begin{equation}\label{eq:nondeg3}
\sup_{B_r} u = \max_{\de B_r} u \geq cr \quad \text{all} \quad r>0
\end{equation}
for some absolute constant $c>0$. This follows from the uniform convergence of $u_k$ to $u$ on $\de B_r$ and the bound \eqref{eq:nondeg2} of Lemma \ref{lem:non-degeneracy2} (note that a fixed radius $r$ is eventually greater than $\tilde{c}\d_k \log(1/\d_k)$ as $\d_k\to 0$).

Now let $p\in F(u)$ be an arbitrary free boundary point: we will show that $u$ has non-degenerate growth away from $p$. By Lemma \ref{lem:non-degeneracy2} and Proposition \ref{prop_limitofsolns} we know that $u$ is non-degenerate at $p$ certainly up to scale $|p|$:
\[
\sup_{B_r(p)} u \geq c r \quad \text{for}\quad 0<r<|p|.
\]
so that by potentially halving the constant $c'=c/2$, we have 
\[
\sup_{B_r(p)} u \geq c' r \quad \text{for}\quad 0<r\leq 2|p|.
\]
For the remaining scales $r> 2|p|$, we have that $B_{r}(p)\supseteq B_{r/2}(0)$, thus \eqref{eq:nondeg3} implies
\[
\sup_{B_r(p)} u \geq \sup_{B_{r/2}} u \geq c r/2 = c'r \quad \text{for}\quad r>2|p|.
\]

\emph{Step 4.} We will show that $u$ is a viscosity solution in all of $\R^2$, as well. 
As Lemma \ref{lem:non-degeneracy2} indicates that $u_k$ are non-degenerate in compact subsets of $\R^2\setminus \{0\}$, Proposition \ref{prop_limitofsolns} implies that the limit $u = \lim_{k} u_k$ is certainly a viscosity solution in $\R^2\setminus \{0\}$.

Let us show that $u$ satisfies the viscosity solution condition at the origin, as well. Assume that at $0$ there is a touching disk $D\subseteq \{u=0\}$ to $F(u)$ from the zero phase, and let $e_1$ be the outer unit normal to $\de D$ at $0$. Because $u$ has non-degenerate growth at the origin \eqref{eq:nondeg3}, we deduce by \cite[Lemma 11.17]{CafSalsa}
\[
u(x) = s x_1^+ + o(|x|) \quad \text{for some } s>0 \text{ in non-tangential regions of } D^c.
\]
On the other hand, by the previous step we can apply Proposition \ref{prop_blowups} to conclude that $u$ necessarily blows up to the one-plane solution $P$. Thus, we confirm that $s=1$, i.e. the viscosity subsolution condition at $0$ is satisfied. 

Suppose now that $D\subseteq \{u>0\}$ is a touching disk to $F(u)$ at $0$ from the positive phase and that $e_1$ is the inner unit normal to $\de D$ at $0$. Then \cite[Lemma 11.17]{CafSalsa} states that
\[
u(x) = s x_1^+ + o(|x|) \quad \text{for some } s>0 \text{ in non-tangential regions of } D.
\]
Fix $\eps>0$ small. Since $u$ is a non-degenerate variational solution, it blows up at the origin to $P$ or $W_s$  for some $s>0$, so that there is a small enough $r_0=r_0(\eps)$ such that  
\[
|u(x) - s x_1^+| \leq s\eps r_0/2 \quad \text{in} \quad B_{r_0}\cap D.
\]
and 
\[
F(u)\cap B_{r_0} \subseteq \{|x_1| < \eps r_0/2\}.
\]
By the uniform convergence of $u_k$ to $u$ in $\bar{D}_{r_0}\setminus B_{\eps r_0}$, we see that for all large enough $k$
\[
|u_k(x) - sx_1^+|< s\eps r_0 \quad \text{in} \quad (\bar{D}_{r_0}\setminus B_{\eps r_0})\cap D
\]
and since $F(u_k) \to F(u)$ in the Hausdorff distance in $\bar{D}_{r_0}\setminus B_{\eps r_0}$, we have
\[
F(u_k)\cap (B_{r_0} \setminus B_{\eps r_0}) \subseteq \{|x_1| < \eps r_0 \}
\]
for all large enough $k$. Define the domain
\[
U_t = \{x \in \bar{D}_{r_0} \setminus B_{\eps r_0}: x_1 >  \eps r_0 + t \eta(x_2) r_0\}
\]
where $0\leq \eta(x_2) \leq 1$ is a smooth bump function supported in $(\eps r_0, 2r_0/3)$ with $\eta(x_2) = 1$ for $x_2 \in [r_0/3, r_0/2]$. Then $U_0\subseteq \{u_k>0\}$, but for some $-3\eps<t_0<0$, $U_{t_0}$ will touch $F(u_k)$ at a point $p\in F(u_k)\cap \{\eps r_0<x_2 < 2r_0/3\}.$ Now we can define a harmonic function $v$ in $U_{t_0}$ with boundary values
\[
v(x) = \left\{\begin{array}{rcl} s x_1 - s\eps r_0 & \text{on} & \de B_{r_0}\cap \{x_1>\eps r_0\}\\ 0 &  \text{on} & (B_{r_0}\setminus B_{\eps r_0})\cap \{x_1 = \eps r_0 + t_0  \eta(x_2) r_0 \}.\end{array}\right.
\]
By the maximum principle, $v \leq u_k$ in $U_{t_0}$. Furthermore, the viscosity supersolution condition for $u_k$ (recall Definition \ref{def:visc.super}) implies that near $p$ in $U_{t_0}$
\[
v(x) \leq u_k(x) = \tilde{s} \langle x-p, \nu(p)\rangle + o(|x-p|) \quad \text{for some } \tilde{s}\leq 1,
\]
where $\nu(p)$ is the inner unit normal to $\de B_{t_0}$ at $p$. On the other hand, a standard perturbation argument gives 
\[
v_{\nu}(p) = s + O(\eps).
\]
Since $\eps$ is arbitrary we conclude that $s\leq \tilde{s}\leq 1$, and thus the viscosity supersolution condition holds at $0$, as well.

\emph{Step 5.} Having established that $u$ is both a variational and a viscosity solution in all of $\R^2$,
we can argue as in Proposition \ref{prop:Voutcomes} to show that either $u=P$, or $u = TP_a$ for some $a> 0$, or $u=W_1$, or that $|\nabla u|<1$ in $\{u>0\}$ with $u$ blowing down to a wedge solution $W_s$ for some $0<s\leq 1$. We shall prove that this last possibility doesn't actually arise.

First we follow analogous reasoning to Theorem \ref{thm:mainglobal} to show that any such $u$ with $|\nabla u| <1$
in the positive phase is a double hairpin solution. 
Denote the two components of $F(u_k)$ connecting $\de B_{\d_k}$ to $\de B_{R_k}$ by $\g_1$ and $\g_2$, and assume that we are in that last scenario: $u$ blows down to a wedge $W_s$ and $|\nabla u| <1$ everywhere in the positive phase. 
Let us first show that at every $p \in F(u)\setminus \{0\}$ $u$ blows up to the one-plane solution $P$, thereby concluding that $F(u)$ is smooth at $p$ by the classical Alt-Caffarelli result. 
Suppose on the contrary that $u$ blows up to a wedge $W_s=s|x_1|$ for some $0<s\leq 1$. Then arguing as in the proof of Lemma \ref{lem:almostsmooth} we can show that for any $r>0$ and $k$ large enough, $B_r(p)\cap \{u_k=0\}$ consists of exactly two connected components. Let $S$ be the shortest segment connecting them. By the flux balance Lemma \ref{lem:fluxbalance} it is easy to see that the two endpoints of $S$ have to belong to separate connected components of $F(u_k)$. But then $\g_1$, $S$, $\g_2$ and $\de B_{2\d_k}$ enclose a subregion $D$ of $B_{2\d_k}$, where $u_k$ satisfies the universal Lipschitz bound of Lemma \ref{lem:Lipschitz2} and for which $$\mathcal{H}^1(\de D\setminus F(u_k)) \leq \mathcal{H}^1(S) +\mathcal{H}^1(\de B_{2\d_k})\leq r +4\pi \d_k.$$
On the other hand, $\mathcal{H}^1(F(u_k)\cap \de D)\geq |p|-\d_k$ and we can invoke the flux balance lemma again to reach a contradiction, taking $r$ and $\d_k$ small enough.

Thus, $F(u)$ is smooth everywhere, but possibly at the origin. It is not hard to see that the topological assumption (A), which the $u_k$ satisfy, plus the fact that $F(u_k) \to F(u)$ on compact subsets of $\R^2\setminus\{0\}$ precludes $F(u)\setminus \{0\}$ from having a closed curve as a connected component. 
Now we are exactly in a position to invoke \cite[Lemma 7.5 and 7.6]{JK} and conclude that $F(u)$ is smooth at the origin, as well. Hence, $u$ is a global \emph{classical solution} of \eqref{FBP} that blows down to a wedge $W_s$ and satisfies $|\nabla u| <1$ in $\{u>0\}$. Therefore, arguing as in Proposition \ref{prop:Voutcomes}(c) we conclude that $u=H_a$, the double hairpin solution of separation $a$ between the two hairpin curves $\g_{\pm}$.

At this point our argument diverges from the one of Theorem \ref{thm:mainglobal} because we wish to rule
out the possibility $u = H_a$. 
Denote by $p_1, p_2$ the points where $\gamma_1$ and $\g_2$ connect to $\de B_{\d_k}$, respectively, and let $\alpha$ be the arc of $\de B_{\d_k}$ between $p_1$ and $p_2$ that belongs to $\de \{u_k>0\}$. Then $\G = \gamma_1\cup \alpha \cup \g_2$ is a piecewise smooth Jordan arc.  Let $p$ be the center of the double hairpin solution $u$ and consider a large disk $B_R(p)$, with $R\gg a \gg \d_k$ such that $0\in B_{R/2}(p)$. Fix $\eps>0$. By the Hausdorff distance convergence of $F(u_k)$ to $F(u)$ locally, we have that 
\[
\G \cap B_{R}(p) \subseteq (\mathcal{N}_{\eps}(\g_+) \cap B_R(p)) \sqcup  (\mathcal{N}_{\eps}(\g_-)\cap B_R(p) )
\]
and 
\[
(\g_+ \sqcup \g_-) \cap B_{R}(p) \subseteq \mathcal{N}_{\eps}(\G) \cap B_{R}(p)
\]
for all large enough $k$. Thus, there exists a straight line segment $\beta \subseteq \{u_k>0\}$ such that $\bar{\beta}$ realizes the distance of size $\approx a$ between the disjoint compact sets $\G_+ :=\G \cap  \overline{(\mathcal{N}_{\eps}(\g_+) \cap B_R(p))}$ and $\G_-:=\G \cap  \overline{(\mathcal{N}_{\eps}(\g_-) \cap B_R(p))}$. Denote the two endpoints of $\bar{\beta}$ by $p_+\in \G_+$ and $p_-\in \G_-$ and let $\G(p_-,p_+)$ denote the closed segment of the Jordan arc $\G$ between the two points $p_+$ and $p_-$. Without loss of generality we may assume that it is $\G_-$ that intersects $B_{2\d_k}(0)$, so that surely $p_+\notin B_{2\d_k}(0)$. Now, by the Jordan Curve Theorem, $\G(p_-,p_+)\cup \beta$ encloses a bounded connected, piecewise smooth domain $\O\subseteq \{u_k>0\}$. Let $\O_0$ be the connected component of $\O\setminus B_{2\d_k}$ whose boundary contains $p_+$. Then $\O_0$ is a bounded, piecewise smooth domain that is a subset of the positive phase $\{u_k>0\}$ and $u_k$ satisfies the universal Lipschitz bound $|\nabla u_k| \leq C$ over $\overline{\O}_0\subseteq (B_{2\d_k})^c$ of Lemma \ref{lem:Lipschitz2}. However,
$$\mathcal{H}^1(\de \O_0 \cap F(u_k)) \geq R  \quad \text{while}\quad \mathcal{H}^1(\de \O_0 \setminus F(u_k)) = O(\d_k + a + \eps).
$$
so that we reach a contradiction by invoking the flux balance lemma.

\end{proof}

Having established the classification of limits above, we will now show by a compactness argument that a solution $u$, satisfying (A) in an annulus $B_{1}\setminus B_{\d}$, has a free boundary that is $\eps$-flat on all scales $C\d<r <1/C$ for some $C=C(\eps)$. 

\begin{lemma}[Flatness]\label{lem:flatness} For every $\eps>0$, there exist $0<\d_0\ll 1$ and $C>1$ such that if $u$ is a classical solution of \eqref{FBP} in $A=B_{1}\setminus B_{\d}$ with $0<\d <\d_0$ and $u$ satisfies the topological assumption (A), then for every $r\in (C\d, 1/C]$
\begin{equation*}
    |u\chrc{T}(\rho x) - P(x)| \leq \eps r \quad \text{in} \quad B_{r}\setminus B_{C\d}, \quad \text{for some rotation }  \rho = \rho_r,
\end{equation*}
where $T$ is the connected component of $(B_{1/C}\setminus B_{\d})^+(u)$, whose closure intersects $\de B_{\d}$.
\end{lemma}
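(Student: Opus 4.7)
Assume, for contradiction, that the lemma fails. Then there exist $\eps_0>0$, $\d_k\searrow 0$, classical solutions $u_k$ on $B_1\setminus B_{\d_k}$ satisfying (A), and radii $r_k\in (C_k\d_k, 1/C_k]$ with $C_k\to\infty$, such that for every rotation $\rho$,
\[
\sup_{B_{r_k}\setminus B_{C_k\d_k}} |u_k\chrc{T_k}(\rho x) - P(x)| > \eps_0 r_k.
\]
Rescale by $v_k(x):=r_k^{-1}u_k(r_k x)$: each $v_k$ is a classical solution on $B_{R_k}\setminus B_{\d'_k}$ with $R_k:=1/r_k\to\infty$ and $\d'_k:=\d_k/r_k\to 0$, still satisfying (A). Writing $\tilde T_k$ for the rescaled distinguished component, the rescaled failure reads $\sup_{B_1\setminus B_{C_k\d'_k}}|v_k\chrc{\tilde T_k}(\rho x) - P(x)|>\eps_0$ for every rotation $\rho$.

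By Theorem \ref{keyprop}, along a subsequence (and after a fixed rotation) $v_k\to v_\infty$ uniformly on compacts of $\R^2\setminus\{0\}$, where $v_\infty$ is $P$, $TP_a$ for some $a>0$, or $W_1$. The topological assumption (A) implies that in each rescaled annulus the positive phase $\{v_k>0\}$ is a single connected region touching $\de B_{\d'_k}$ along a single arc, whose two endpoints are the inner endpoints of the strands of $F(v_k)$. By Hausdorff convergence of the free boundary (Proposition \ref{prop_limitofsolns}(a)), these two endpoints converge (after rotation) to the directions $\pm\pi/2$ at the origin, i.e.\ the intersections of the component $\{x_1=0\}\subset F(v_\infty)$ with arbitrarily small circles. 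Consequently the positive arc on $\de B_{\d'_k}$ spans the right half-circle in the limit, and $\tilde T_k$ Hausdorff-converges on compacts of $\R^2\setminus\{0\}$ to a single half-plane: it is the whole positive phase for $v_\infty=P$; for $v_\infty=TP_a$ it is the component $\{x_1>0\}$ (the unique positive component of $TP_a$ whose closure contains the origin, since $a>0$); and the single-positive-arc condition outright rules out the wedge case $v_\infty=W_1$, whose positive phase $\{x_1\neq 0\}$ would require two positive arcs on the inner circle. In every admissible case, for a suitable rotation $\rho$, $v_k\chrc{\tilde T_k}(\rho\cdot)\to P$ uniformly on compact subsets of $\R^2\setminus\{0\}$.

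Fix $\eta:=\eps_0/(2(L+2))$ with $L$ the Lipschitz constant from Lemma \ref{lem:Lipschitz2}. Uniform convergence on $\overline{B_1}\setminus B_\eta$ gives $|v_k\chrc{\tilde T_k}(\rho x) - P(x)|<\eps_0/2$ there for $k$ large. On the thin inner annulus $B_\eta\setminus B_{C\d'_k}$, the Lipschitz bound of Lemma \ref{lem:Lipschitz2} together with the removable-singularity estimate $v_k=O(\d'_k)$ on $\de B_{\d'_k}$ (from Step 1 in the proof of Theorem \ref{keyprop}) yields $v_k(x)\le L|x|+L\d'_k$; combined with $|P(x)|\le|x|\le\eta$, this gives $|v_k\chrc{\tilde T_k}(\rho x) - P(x)|\le(L+1)\eta + L\d'_k<\eps_0/2$ once $C$ and $k$ are sufficiently large. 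Combining the two estimates contradicts the standing failure assumption and completes the argument.

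\textbf{Main obstacle.} The delicate step is the identification, in the Hausdorff limit, of the distinguished component $\tilde T_k$ among the components of $\{v_\infty>0\}$: the inner-endpoint condition pins it down as the unique half-plane whose closure contains the origin, which crucially bypasses any need to separately exclude the multi-component global limits $TP_a$ (where the other half-plane $\{x_1<-a\}$ is simply discarded by $\chrc{\tilde T_k}$) and $W_1$ (which is inconsistent with the single-arc structure of the positive phase on $\de B_{\d'_k}$). The remaining technicality — extending uniform convergence across the shrinking inner annulus $B_\eta\setminus B_{C\d'_k}$ — is handled routinely by the universal Lipschitz bound together with the removable-singularity estimate.
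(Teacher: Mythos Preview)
Your overall strategy — contradiction, rescaling, invoking Theorem~\ref{keyprop}, then identifying the limit of $\tilde T_k$ as a single half-plane — is the same as the paper's. But there is a genuine error in how you dispose of the wedge case.

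Your claim that ``the single-positive-arc condition outright rules out the wedge case $v_\infty = W_1$'' is false. A sequence $v_k$ satisfying (A) can perfectly well converge to $W_1$ on compacts of $\R^2\setminus\{0\}$ while its positive phase meets $\partial B_{\d'_k}$ along a single arc: the two strands $\gamma_1,\gamma_2$ of $F(v_k)$ may both attach at nearby points on the shrinking inner circle and then separate, one heading toward $+i\infty$ and the other toward $-i\infty$ along the $x_2$-axis. The positive phase is then one connected region (almost the whole annulus minus a thin sliver), touching $\partial B_{\d'_k}$ along a single arc that is nearly the full circle. Your supporting assertion that the inner endpoints converge to the directions $\pm\pi/2$ is also unjustified: Hausdorff convergence of $F(v_k)$ to $\{x_1=0\}$ holds only on compacts \emph{away} from $0$, and says nothing about the angular location of points on the circle $\partial B_{\d'_k}$ whose radius tends to zero. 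The paper treats $W_1$ as a legitimate case (its Case~2) and uses the flux balance Lemma~\ref{lem:fluxbalance} to prove that the connected piece of $\Gamma = \gamma_1\cup\alpha\cup\gamma_2$ containing the inner arc must exit $B_{s_k}$ through \emph{opposite} arcs $\alpha_\pm(s_k)$; this is precisely what pins $\tilde T_k$ to a single half-plane on the fixed annulus, and it is the substantive ingredient you are missing. The same flux-balance reasoning, not the inner-endpoint heuristic, is what the paper uses in the $TP_a$ case as well.

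A secondary remark: the paper avoids your separate Lipschitz argument on the shrinking inner annulus $B_\eta\setminus B_{C_k\d'_k}$ by choosing the contradicting sequences with $\d_k C_k^2 \to 2/3$. This forces $C_k\d'_k > 1/2$ after rescaling, so the assumed failure already lives on the \emph{fixed} annulus $B_1\setminus B_{1/2}$, where uniform convergence on compacts applies directly. Your split into a fixed outer annulus and a shrinking inner one is not wrong, but it is an avoidable complication.
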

\begin{proof}
We  argue by contradiction. Fix $\eps>0$ and assume that the statement of the lemma is false. Then for any sequence $\d_k \searrow 0$ and any $C_k \nearrow \infty$, there exists a sequence $\{v_k\}_k$ of counterexamples: namely, there exist classical solutions $v_k$ in $B_{1}\setminus B_{\d_k}$ satisfying assumption (A), such that  
if $\tilde{T}_k$ denotes the connected component of $(B_{1/C_k}\setminus B_{\d_k})^+(v_k)$ whose closure intersects $\de B_{\d_k}$, then for all rotations $\rho$
\begin{equation}\label{lemmaFlat_ctrex}
\|v_k\chrc{\tilde{T}_k}(\rho x) - P(x)\|_{L^{\infty}(B_{r_k}\setminus B_{C_k\d_k})} > \eps r_k \quad \text{for some} \quad r_k\in (C_k \d_k, 1/C_k].
\end{equation}
Choose the sequences $\{\d_k\}_k$ and $\{C_k\}_k$ in such a way that
\[
\d_k \searrow 0, \quad C_k  \nearrow \infty \quad \text{with} \quad \d_k C_k^2 \to 2/3 \quad \text{as}\quad k\to \infty
\]
and consider the rescales:
\[
u_k(x) = v_k(r_k x)/r_k \quad \text{for} \quad x\in B_{1/r_k}\setminus B_{\d_k/r_k}.
\]
Denote $T_k:=r_k^{-1} \tilde{T}_k$. Since $C_k\d_k/r_k \geq C_k^2 \d_k > 1/2$ for all large $k$, \eqref{lemmaFlat_ctrex} implies that for all rotations $\rho$,
\begin{equation}\label{lemmaFlat_ctrexRescale}
\|u_k\chrc{T_k}(\rho x) - P(x)\|_{L^{\infty}(B_{1}\setminus B_{1/2})} \geq \|u_k\chrc{T_k}(\rho x) - P(x)\|_{L^{\infty}(B_{1}\setminus B_{C_k\d_k/r_k})} > \eps.
\end{equation}
On the other hand, the solutions $u_k$ satisfy hypothesis (A) in the annulus $(B_{1/r_k}\setminus B_{\d_k/r_k})\supseteq B_{C_k}\setminus B_{1/C_k}$, hence Theorem \ref{keyprop} states that the $u_k$ converge uniformly on compacts subsets of $\R^2\setminus \{0\}$ to some rotation of either $P$, $W_1$ or $TP_a$, for some $a> 0$. Let us show that each leads to a contradiction with \eqref{lemmaFlat_ctrexRescale}.

\noi
\emph{Case 1.} Assume that $u_k(\rho x) \to P(x)$ for some rotation $\rho$. Then by Alt-Caffarelli regularity, for all large $k$, \mbox{$A_k^+:=(B_1\setminus B_{1/2})^+(u_k)$} is the epigraph of a Lipschitz function in the annulus $(B_1\setminus B_{1/2})$ and, thus, has a single connected component. But since each connected component of the non-empty $T_k \cap (B_{1}\setminus B_{1/2})$  is a connected component of $A_k^+$, it has to be that 
\[
T_k \cap (B_{1}\setminus B_{1/2}) = A_k^+.
\]
Hence for all large $k$,
\[
\|u_k\chrc{T_k}(\rho x) - P(x)\|_{L^{\infty}(B_{1}\setminus B_{1/2})}  =  \|u_k(\rho x) - P(x)\|_{L^{\infty}(B_{1}\setminus B_{1/2})} \leq \eps,
\]
which contradicts \eqref{lemmaFlat_ctrexRescale}.

\noi
\emph{Case 2.} Assume that $u_k(\rho x) \to W_1(x)$ for some rotation $\rho$. Denote by  $\g_1$, $\g_2$ the two components  of $F(u_k(\rho x))$ and let $\g_i$ attach at $p_i\in \de B_{\d_k/r_k}$, $i=1,2$.  Let $\G=\G_k$ be the simple Jordan arc that is the union of  $\g_1$, $\g_2$ and the circular arc $\widehat{p_1p_2}$ of $\de B_{\d_k/r_k}$ that belongs to $\de \{u_k>0\}$, and let $\G(r)$, for $r>\d_k/r_k$ be the connected component of $\G\cap B_r$ that contains $p_1$ and $p_2$. Set $s_k=1/(C_kr_k)$, so that we have $$1<s_k<1/(\d_k C_k^2) < 2.$$
Denote by $\a_{\pm}(r)$ the two circular arcs of $\de B_r\cap \{|x_1|<\eps/2 \}$. 
By the convergence of $u_k(\rho x)$ to $W_1(x)$ on compact subsets, we see that the two endpoints of $\G(r)$ belong to  $\a_{+}(r)\cup \a_-(r)$, for every $1/2\leq r\leq 3$. 

Claim that the two endpoints of $\G(s_k)$  belong to distinct arcs $\a_{\pm}(s_k)$. Assume not, i.e. that they belong to the same arc, say $\a_+(s_k)$. By applying the argument of Case 3 of Lemma \ref{lem:flattypes}, necessarily the two endpoints of $\G(r)$ belong to $\a_+(r)$, for every $s_k\leq r \leq 3$. The lemma further implies that $\G(3)$ doesn't intersect $\alpha_-(r)$ for any $s_k\leq r\leq 3$ and no component of $\G\cap B_3$, other than $\G(3)$, can intersect $\alpha_+(r)$ for any $s_k\leq r \leq 2$. By the convergence of $F(u_k)$ to $\{x_1=0\}$ in $B_4\setminus B_{1/2}$, we see that $\G \cap B_2$ has connected components that attach on $\alpha_-(2)$: let $F_-$ denote their union of all of those. Then the dist$(\overline{\G(2)}, \overline{F_-}) <\eps$ and is realized at some points $p \in \overline{\G(2)}$ and $q\in \overline{F_-}$ so that the straight line segment $pq\subseteq \{u_k>0\}$. Thus the segment $\G(p,q)$ of $\G$ between $p$ and $q$ and the straight line segment $pq$ enclose a piecewise smooth Jordan domain $D$ of positive phase that has diam$(D)>2$. Applying the flux balance Lemma \ref{lem:fluxbalance} to the connected component of $D\setminus B_{\eps}$ intersecting $\de B_1$, we reach the desired contradiction.

Now, given that $\G(s_k)\subseteq \de T_k$ we see that 
$$\rho^{-1}T_k\subseteq \{x_1>-\eps/2\} \quad \text{and}\quad F(u_k\chrc{T_k}(\rho x)) \cap (B_{1}\setminus B_{1/2}) \subseteq \{|x_1|<\eps/2\}$$
so that
\begin{align*}
& \|u_k\chrc{T_k}(\rho x) - P(x)\|_{L^{\infty}(B_{1}\setminus B_{1/2})} \leq \|u_k(\rho x) - P(x)\|_{L^{\infty}((B_{1}\setminus B_{1/2})\cap \{x_1>-\eps/2\})} \\
& \leq  \eps/2 + \|u_k(\rho x) - W(x)\|_{L^{\infty}((B_{1}\setminus B_{1/2})\cap \{x_1>-\eps/2\})} \leq \eps
\end{align*}
for all large $k$, as $u_k(\rho) \to W$ uniformly on compacts subsets of $\R^2\setminus \{0\}$. However, this contradicts \eqref{lemmaFlat_ctrexRescale}.

\noi
\emph{Case 3.} Suppose that $u_k(\rho x) \to TP_a(x)=x_1^+ + (-x_1-a)^+$ for some rotation $\rho$. Then for all large $k$, by Alt--Caffarelli regularity $(B_{s_k}\setminus B_{\d_k/r_k})^+(u_k(\rho x))$ has a connected component $D_{k}^+$ which is the subgraph of a Lipschitz function, with $D_k^+\subseteq B_{s_k}\cap \{x_1<-a/2\}$. In particular, $\G(s_k)\cap \overline{D_k^+} = \emptyset$.

Redefine $\alpha_{\pm}(r)$ to be the two circular arcs of $\de B_r\cap\{|x_1|<\min(\eps/2,a/4)\}$. As in the previous Case 2, it has to be that, for large enough $k$, the endpoints of $\G(r)$ belong to $\a_-(r)\cup\alpha_{+}(r)$, $1/2\leq r\leq 3$. Applying the Alt-Caffarelli regularity, we further see that each of the two endpoints of $\G(s_k)$ has to belong to a distinct arc $\alpha_{\pm}(s_k)$. Thus,
$$\rho^{-1}T_k\subseteq \{x_1>-\min(\eps/2,a/4)\} \quad \text{and}\quad F(u_k\chrc{T_k}(\rho x))\cap (B_{1}\setminus B_{1/2}) \subseteq \{|x_1|<\min(\eps/2,a/4)\}$$
so that  by the uniform convergence of $u_k(\rho x)$ to $TP_a(x)$ on compact subsets of $\R^2\setminus \{0\}$
\begin{align*}
& \|u_k\chrc{T_k}(\rho x) - P(x)\|_{L^{\infty}(B_{1}\setminus B_{1/2})} \leq \|u_k(\rho x) - P(x)\|_{L^{\infty}((B_{1}\setminus B_{1/2})\cap \{x_1>-\min(\eps/2,a/4)\})} = \\
& = \|u_k(\rho x) - TP_a(x)\|_{L^{\infty}((B_{1}\setminus B_{1/2})\cap \{x_1>-\min(\eps/2,a/4)\})} \leq \eps
\end{align*}
for all large $k$, contradicting \eqref{lemmaFlat_ctrexRescale}. This completes the proof of the lemma.
\end{proof}

Now that flatness of the free boundary on all scales $C\d<r<1/C$ has been established for solutions that satisfy condition (A), the proof of the effective removable singularity Theorem \ref{thm:removable} reduces to applying the analogous weaker result proved in \cite{JK}, which has flatness as an extra hypothesis. For the reader's convenience,  we provide its statement below and a sketch of the proof (for the full proof we refer to the discussion in \cite[Section 9]{JK}).

\begin{theorem}[``Flat with a small hole implies Lipschitz''] \label{thm:effremov-weak}
There exist numerical constants $\eps_0>0$ and $c>0$ such that for every $0<\eps<\eps_0$ and every $d>0$, the following holds: if $u$ is a classical solution in the annulus $A=B_1\setminus B_{d}$, which satisfies assumption (A) and whose free boundary is flat on all scales in the sense that
\begin{equation}\label{eq:flatness-thm:effremov}
    |u(\rho x) - P(x)| \leq \eps r \quad \text{in} \quad B_{r}\setminus B_{d}, \quad \text{for some rotation }  \rho = \rho_r \quad \forall r\in (d,1),
\end{equation}
then there is a function $g:[-1,1]\to \R$ satisfying $g(0) = 0$ and $|g'(x)| \le c\e
$ for all $x\in [-1,1]$ such that after rotation
\[
\{z\in \C: cd < |z| < 1/c\} \cap A^+(u) = 
\{z\in \C: cd < |z| < 1/c\} \cap \{z = x_1+ix_2: x_1 > g(x_2)\}.
\]
\end{theorem}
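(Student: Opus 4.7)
The plan is to promote the scale-by-scale flatness hypothesis \eqref{eq:flatness-thm:effremov} to a single Lipschitz graph description of $F(u)$ in the intermediate annulus $\{cd < |z| < 1/c\}$ via a multi-scale application of the classical Alt--Caffarelli ``flatness implies $C^{1,\alpha}$'' regularity theorem. The first step is to synchronize the rotations across scales. Combining flatness with the Lipschitz bound of Lemma \ref{lem:Lipschitz2} and the non-degeneracy estimate \eqref{eq:nondeg1} of Lemma \ref{lem:non-degeneracy2} shows that $F(u) \cap (B_r \setminus B_d)$ lies in an $O(\eps r)$-neighbourhood of the line $\ell_r := \rho_r(\{x_1 = 0\})$. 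For any $d < r_1 < r_2 < 1$, both $\ell_{r_1}$ and $\ell_{r_2}$ approximate the same piece of $F(u)$ to error $O(\eps r_1)$ in $B_{r_1}$, forcing their angular difference to be $O(\eps)$. After an initial overall rotation of coordinates, we may therefore assume that every $\rho_r$ is within angle $O(\eps)$ of the identity, and equivalently that $|u(x) - P(x)| \le C\eps r$ on $B_r \setminus B_d$ for every $r \in (d, 1)$.

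Next I would apply Alt--Caffarelli locally at each free boundary point. For $p \in F(u)$ with $r := |p| \in [C_0 d, 1/C_0]$ and $C_0$ sufficiently large, the ball $B_{r/4}(p)$ lies in $A \cap (B_{2r} \setminus B_d)$, so $u$ is $O(\eps)$-flat on it. The Alt--Caffarelli theorem then represents $F(u) \cap B_{r/8}(p)$ as a $C^{1,\alpha}$ graph $x_1 = g_p(x_2)$ with $|g_p'| \le C\eps$ and $|g_p| \le C\eps r$. Because all local rotations are $O(\eps)$-close to the identity, adjacent local graphs agree on overlaps by uniqueness of the free boundary. The topological assumption (A) gives exactly two components of $F(u)$ in $A$, and since each is $O(\eps r)$-close to $\ell_r$ (and hence to $\{x_1 = 0\}$) at every scale and has one end on $\partial B_d$ and one on $\partial B_1$, one component must lie entirely in $\{x_2 > 0\}$ and the other entirely in $\{x_2 < 0\}$. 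These assemble into a single $C^{1,\alpha}$ function $g$ defined on $[-1,1] \setminus (-C_0 d, C_0 d)$ with $|g'| \le c\eps$.

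Finally, I extend $g$ by any Lipschitz interpolation across the small gap near $x_2 = 0$ and shift vertically so that $g(0) = 0$; the resulting function $g : [-1,1] \to \R$ still satisfies $|g'| \le c\eps$. The local sign identification provided by Alt--Caffarelli says that in each ball $B_{r/8}(p)$ the positive phase coincides with $\{x_1 > g_p(x_2)\}$; combined with the connectedness of each of $\{x_1 > g(x_2)\}$ and $\{x_1 < g(x_2)\}$ inside the intermediate annulus $\{cd < |z| < 1/c\}$, this yields the required equality $A^+(u) \cap \{cd < |z| < 1/c\} = \{x_1 > g(x_2)\} \cap \{cd < |z| < 1/c\}$.

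The main obstacle is to carry out the scheme uniformly all the way down to the inner scale $cd$. At the smallest admissible scales $r \sim cd$, the rotation $\rho_r$ must be pinned down from a portion of $F(u)$ that is well-separated from $\partial B_d$, and Alt--Caffarelli's regularity must be applied in balls strictly interior to $A$; this forces $c$ to be chosen large enough for both purposes simultaneously. A secondary subtlety, handled via assumption (A) together with the flux-balance Lemma \ref{lem:fluxbalance} in the spirit of Step 4 of the proof of Theorem \ref{keyprop}, is to rule out spurious additional components of $F(u)$ that might enter the intermediate annulus and disrupt the single-graph picture.
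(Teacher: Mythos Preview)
Your plan has a genuine gap at precisely the point the paper identifies as the heart of the matter: ruling out a logarithmic spiral. Your synchronization step asserts that for any $d<r_1<r_2<1$ the lines $\ell_{r_1}$ and $\ell_{r_2}$ both approximate $F(u)$ in $B_{r_1}$ to error $O(\eps r_1)$, and that this forces their angular difference to be $O(\eps)$. But the flatness hypothesis at scale $r_2$ only places $F(u)\cap B_{r_2}$ within $O(\eps r_2)$ of $\ell_{r_2}$; restricting to $B_{r_1}$ does not improve this to $O(\eps r_1)$. So your comparison only yields $|\rho_{r_1}^{-1}e_1-\rho_{r_2}^{-1}e_1|=O(\eps\, r_2/r_1)$, which is useful only for comparable scales. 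Summing the $O(\eps)$ increments over the $\sim\log(1/d)$ dyadic scales between $d$ and $1$ gives a cumulative turning of order $\eps\log(1/d)$, not $O(\eps)$; as $d\to 0$ this blows up and the free boundary could, for all your argument shows, wind around the hole. Nothing in the rest of your outline (local Alt--Caffarelli, assumption (A), flux balance) touches this issue.

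The paper's proof confronts exactly this difficulty. It also uses Alt--Caffarelli in overlapping dyadic annuli to get the incremental bound $|\rho_k^{-1}e_1-\rho_{k-1}^{-1}e_1|\le c'\eps$ (their \eqref{eqn:succrot}), but then recognizes that this is not enough. The new ingredient is the conformal map $U=u+i\tilde u$ from $A^+(u)$ to a half-annulus $D_R$ in $\{\Re\zeta>0\}$ and its inverse $\Phi=U^{-1}$. Writing $\Phi'=e^{h+i\tilde h}$, the total turning of $\nabla u$ along $F(u)$ equals the oscillation of $\tilde h$ on the imaginary axis, and the free boundary condition forces $h=0$ there. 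The crucial estimate is Lemma~\ref{lem:semicircle}: the harmonic function on a half-annulus $\{\delta<|\zeta|<1,\ \Re\zeta>0\}$ with boundary data $0$ on the diameter and $1$ on the arcs has normal-derivative integral on $\{\Re\zeta=0,\ 2\delta<|\zeta|<1/2\}$ bounded by an absolute constant independent of $\delta$. Using a multiple of this as a barrier for $h$ converts $|h|\le C\eps$ into $\mathrm{osc}\,\tilde h\le C\eps$ uniformly in $d$, which is exactly the missing control on the direction of $\nabla u$ across all scales. Your scheme would need an analogous $d$-independent summation argument to close.
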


\begin{proof}  Take $\eps_0$ be small enough so that the Alt-Caffarelli theory says that if $F(u)$ is $\eps$-flat in $B_1$ for $\eps<\eps_0$, then $F(u)$ is a Lipschitz graph of Lipschitz norm $C\eps$ in $B_{1/2}$.

Let $u$ be as in the statement of the theorem. It is not hard to see that the flatness condition \eqref{eq:flatness-thm:effremov} implies that
\begin{equation}\label{thm:effremov-eqn1}
||\nabla u| - 1| \le C\e \quad \text{for all} \quad  z\in \{2d \leq |z| \leq 1/2\} \cap A^+(u).
\end{equation}
In particular, the length of $\nabla u$ is 
very close to $1$, but the issue is that the direction
of $\nabla u$ may change:  the possibility we still need to 
rule out for $F(u)$ is a logarithmic spiral.

Let $e_1$ be a unit vector in the direction of $x_1$. Denote by $A_k$ the annuli $A_k:=B_{r_{k+1}}\setminus B_{r_{k-1}}$, where $r_k = 2^k d$, for $k= 1, 2, 3 \ldots, N$, with $N=\lfloor \log_2 (1/d)\rfloor$, so that $r_N\in (1/2,1)$. By the Alt-Caffarelli theory applied in $(B_{r_{k+2}}\setminus B_{r_{k-2}})$, $k=2,\ldots N-2$, we see that $F(u)\cap A_k$ is the graph of a $c\eps$--Lipschitz function in the direction of $\rho_k^{-1}e_1$, for some rotation $\rho_k$. Because the dyadic annuli $A_k$ are overlapping ($A_k\cap A_{k+1} = B_{r_{k+1}}\setminus B_{r_k}$), it must be that
\begin{equation}\label{eqn:succrot}
|\rho_{k}^{-1}e_1 - \rho_{k-1}^{-1}e_1| \leq c'\eps \qquad k=2,\ldots, N-2. 
\end{equation}
Also, the fact that $F(u)$ is a graph with a small Lipschitz norm in each annulus $A_k$ ensures that $\de B_{r_k}\cap F(u)$ consists of exactly two points for every $k=2,\ldots, N-2$. Denote by $p$ the intersection of $\de B_{r_{1}}$ with $F(u)$ such that $\rho_{1}^{-1} p$ is contained in the lower half-plane $\{\text{Im } z<0\}$. Now define $\tilde{u}$ to be the harmonic conjugate of $u$ in the simply connected region $A^+(u)$, choosing a normalization such that $\tilde{u}(z)\to -|p|$ as $z\to p$. Set $U=u+i\tilde{u}$.  Thus, \eqref{thm:effremov-eqn1} is strengthened to 
\[
|U'(z) - e^{i\theta_k}|\leq C\eps \quad \text{when} \quad z\in A_k^+(u), \qquad k= 2, 3 \ldots, N-2
\]
where, according to \eqref{eqn:succrot}, the angles $\th_k$ satisfy
\begin{equation}\label{eqn:angles}
|e^{i\theta_k} - e^{i\theta_{k-1}}| \leq c' \eps \qquad k= 2, 3 \ldots, N-2.
\end{equation}
Now one can show by induction that 
\begin{equation}\label{eqn:integrate}
|U(z) - e^{i\th_k}z| \leq C'\eps|z| \quad \text{for all } z\in A_k^+(u), \quad k= 2, 3 \ldots, N-2.
\end{equation}
This estimate is sufficient to establish the injectivity of $U$ on $\tilde{A}:=(B_{r_{N-1}}\setminus B_{2d})^+(u)$ for suitably small $\eps >0$. Indeed, if $U(z_1) = U(z_2)$ for some $z_{1}, z_2 \in \tilde{A}$ with $|z_2|\geq |z_1|$, then
\[
1\leq \frac{|z_2|}{|z_1|} \leq \frac{|U(z_2)|/(1-C'\eps)}{|U(z_1)|/(1+C'\eps)} = \frac{1+C'\eps}{1-C'\eps} < 2.
\]
Thus, $z_1$ and $z_2$ belong to some $(A_k\cup A_{k+1})^+(u)$. However, \eqref{eqn:integrate} and \eqref{eqn:angles} imply that $U$ is injective on $(A_k\cup A_{k+1})^+(u)$ for small enough $\eps$, hence it must be that $z_1=z_2$. 

Therefore, for suitably small $\eps>0$, $U$ maps $\tilde{A}$ biholomorphically onto its image contained in the right 
half-plane. Moreover, the image contains a half annulus $D_R$ in the right half-plane. 
\[
U\big((B_{r_{N-1}}\setminus B_{2d})^+(u)\big)\supseteq D_R:=\{\z\in \C: \Re \z >0, ~ 2d(1+c\eps) <|\z| < r_{N-1} (1-c\eps)\}
\]
and $U$ extends smoothly to the boundary of $\tilde{A}$, sending $F(u)\cap \de \tilde{A}$ into the imaginary axis. Thus, in $D_R$ one can define the holomorphic inverse
\[
\Phi = U^{-1}: D_R \to \tilde{A}
\]
which extends smoothly to the boundary, with $\Phi$ mapping $\de D_R\cap \{\Re \z = 0\}$ into $F(u)$. Consider its logarithmic derivative in $D_R$
\[
\Phi'= e^{h + i \tilde{h}}: D_R \to \tilde{A}.
\]
Since $|U'| = |\nabla u| = 1$ on $F(u)\cap \tilde{A}$, we have $|\Phi'| = 1/|U'| = 1$ on
$\de D_R\cap \{\Re \z = 0\}$, so that $h = \log|\Phi'| = 0$ on $\de D_R\cap \{\Re \z = 0\}$. 
Our goal is to control the oscillation of $\tilde h$ on the imaginary axis.

We claim
that
\begin{equation}\label{eqn:oscillation_linbd}
|\mbox{osc } \tilde h | \le C\e \quad\text{over}\quad \mathcal{F}:=\{\Re \z = 0, 4d \leq |\z| \leq r_{N-2}\}.
\end{equation}
This bound on oscillation of the conjugate $\tilde{h}$ over $\mathcal{F}$ measures the turning of $\nabla u$ over $$\Phi(\mathcal{F}) \supseteq F(u)\cap (B_{1/c}\setminus B_{cd}),$$ hence \eqref{eqn:oscillation_linbd} is the same as the Lipschitz bound we are aiming for.   

Equation \eqref{eqn:oscillation_linbd} is proved by using the bound $|h| \le C\e$ on $D_R$ to
estimate the integral of $|\nabla \tilde h| = |\nabla h|$ on the semicircle
$|\z| = r_{N-2}$, $\Re \z >0$ and the part of the imaginary axis $\Re \z = 0$, $4d \le |\Im \z| \le r_{N-2}$. 
The estimate on the semicircle is a routine interior regularity estimate.  On the imaginary
axis, we have $h=0$, and hence
\[
|\nabla \tilde h(iy) | = |\nabla h(iy) | = |(\de/\de x) h(iy)| \quad (\z  = x+iy).
\]
Integrating in $y$, the oscillation of $\tilde h$ is dominated by the integral of the absolute value of the
flux of $\nabla h$ through the portion $4d \le |\Im \z| \le r_{N-2}$ of the imaginary axis.   We control the flux
of $\nabla h$ using a barrier function (majorant of $h$). 

\begin{lemma} \label{lem:semicircle} If $b$ is the harmonic function in the half annulus $\d < |\z| < 1$, $\Re \z>0$,
with boundary values $b(e^{i\theta}) = b(\d e^{i\theta}) = 1$ for $0 < \theta < \pi$, and $b(iy) = 0$, $\d < y < 1$,
then there is an absolute constant $C$ such that 
\[
\int_{2\d}^{1/2} |(\de/\de x) b(iy)| \, dy \le C.
\]
\end{lemma}
\noi
This lemma is proved in  \cite[Lemma 9.3]{JK} as follows.  Interior regularity yields the bounds
$b(e^{i\theta}/2) \le C \sin \theta$ and $b(2\d e^{i\theta}) \le C\sin \theta$.
Then one finds an explicit barrier in the smaller half annulus by conformal mapping and separation of variables.

The crucial feature of the linear estimate for the flux in Lemma \ref{lem:semicircle} is that 
it is independent of $\d>0$ and yields control on the oscillation down to scale $2\d$ comparable to $\d$. 
Rescaled from a unit sized half annulus to the set $4d < |\z| < r_{N-2}$, $\Re \z >0$
we obtain estimate \eqref{eqn:oscillation_linbd} by using a suitably scaled multiple of
$b$ to majorize $h$.  This concludes the proof of Theorem \ref{thm:effremov-weak}.
\end{proof}

\begin{proof}[Conclusion of the proof of Theorem \ref{thm:removable}]
Let $u$ be as in the statement of the theorem. Let $\eps_0$ be the absolute constant of Theorem \ref{thm:effremov-weak} and fix $0<\eps<\eps_0$. According to Lemma \ref{lem:flatness}, there exist positive constants $\d_0=\d_0(\eps)$ and $C=C(\eps)$ such that if $u$ is a solution satisfying the topological assumption (A) in $B_1\setminus B_{\d}$ for $0<\d <\d_0$, then for every $r\in (C\d, 1/C]$
\begin{equation}\label{thm:removable:eqn1}
    |u \chrc{T}(\rho x) - P(x)| \leq \eps r \quad \text{in} \quad B_{r}\setminus B_{C\d}, \quad \text{for some rotation }  \rho = \rho_r
\end{equation}
where $T$ is the component of the positive phase of $u$, whose closure intersects $\de B_{\d}$.
Define $$v(x) := Cu\chrc{T}(x/C) \quad \text{for}\quad x\in B_{1}\setminus B_{C^2\d}.$$
Then, for $d=C^2\d$, $v$ satisfies the topological assumption (A) in $B_{1}\setminus B_{d}$, and because of \eqref{thm:removable:eqn1}, $v$ is $\eps$-flat on all scales $r\in (d, 1)$. We can thus apply Theorem \ref{thm:effremov-weak} to $v$ and complete the proof.
\end{proof}

\section{The double hairpin solution} 

In this section we will provide an alternative description of the family of double hairpin solutions 
$$
\{H_a: \R^2 \to \R: H_a(x) = a H(x/a), ~ a>0\},
$$
discovered by Hauswirth, H\'elein and Pacard \cite{HHP}, which we will use in the proof of the rigidity of the approximate double hairpin solutions. 
Recall that the double hairpin solutions have simply connected positive phase
\[
\Omega_a:= \{H_a>0\} = a \Omega_1
\]
bounded by two catenary curves of free boundary 
\[F(H_a)=\{(x_1, x_2): |x_2/a| = \pi/2 + \cosh (x_1/a)\},\]
and that $H_a$ is invariant under reflection symmetry with respect to both the $x_1$ and the $x_2$-axis:
\[
H_a(\pm x_1, x_2) = H_a(x_1, x_2) = H_a(x_1, \pm x_2).
\]

\noi Since the part of the positive phase of $H_a$ contained in the \emph{right} half-plane \mbox{$\mathcal{D}_a=\Omega_a\cap \{x_1>0\}$} is also simply connected, there one can define a holomorphic extension of $H_a$
\[
U_a:= H_a + i \tilde {H}_a: \mathcal{D}_a \to \C,
\]
where $\tilde {H}_a$ is the harmonic conjugate of $H_a$ in $\mathcal{D}_a$, chosen in a way that $\lim_{z\to 0}\tilde{H_a}(z) = 0$. 
\begin{figure}[h]
\centering 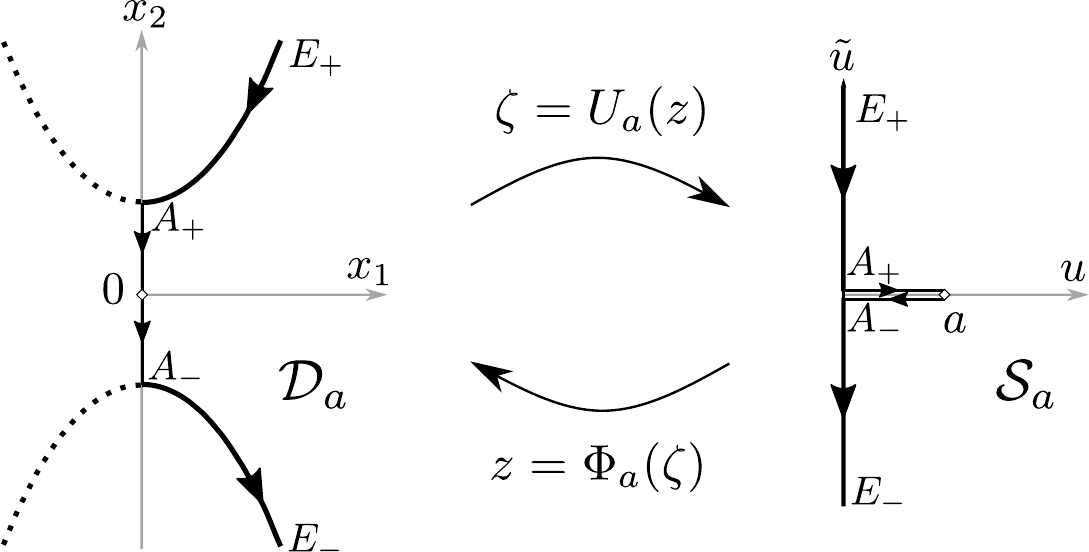 \caption{The conformal diffeomorphism $U_a = H_a + i \tilde{H}_a$ mapping the right half of the positive phase $\mathcal{D}_a = \O_a \cap \{x_1>0\}$ onto the slit domain $\mathcal{S}_a$.}
\label{HPConformal}
\end{figure}
Then  $U_a$ maps  $\mathcal{D}_a$ biholomorphically onto  $\mathcal{S}_a$, the right half-plane minus a horizontal slit of length $a$ (see Figure \ref{HPConformal}):
\[
U_a(\mathcal{D}_a) = \mathcal{S}_a:=\{\zeta = u+i\tilde{u}\in \C: u>0\}\setminus \{\zeta = u: 0< u \leq  a\},
\]
and has an inverse 
\[
\Phi_a:= (U_a)^{-1}: \mathcal{S}_a \to \mathcal{D}_a
\]
whose derivative $d\Phi_a/d\z = e^{\varphi_a(\z)},$ where 
\[
\varphi_a(\z) = -\frac{1}{2}\log (\z - a) + \frac{1}{2}\log (\z+a).
\]
In the formula above we use the standard branch of the logarithm. Note that $a=H_a(0)$ is precisely the value of $H_a$ at the unique \emph{saddle point} at the origin. Observe also that the real part $\Re \varphi_a(\z) \to 0$ as $\z \to i\tilde{u}$  in accordance with the free boundary condition  $\lim_{z \to F(H_a)}|U_a'(z)|=  \lim_{z \to F(H_a)}|\nabla H_a(z)|=1$: 
$$
\lim_{\z \to i\tilde{u}}|e^{\varphi_a (\z)}|=\lim_{\z \to i\tilde{u}}|\Phi_a'(\z)|= \lim_{z \to F(H_a)}\frac{1}{|U_a'(z)|}=1. 
$$
Integrating the expression for $d\Phi_a/d\z$, one can write down an explicit formula for $\Phi_a$:
\[
\Phi_a(\z) = a((\z/a)^2-1)^{1/2} + a \log\left(\z/a + ((\z/a)^2-1)^{1/2}\right) = a \Phi_1(\z/a).
\]
Now the double hairpin solution $H_a$, \emph{restricted to} the right half-plane $\{x_1>0\}$, is given by
\[
H_a(z) = \left\{ \begin{array}{ll} \Re \z & \text{when}\quad z = \Phi_a(\z), \quad \z \in \mathcal{S}_a\\ 0 & \text{for all other } z \in  \{\Re z >0\}. \end{array} \right.
\]

Observe that in the blow-down limit $a\to 0$, $H_a(x)$ converges to the \emph{wedge solution} $W_1(x)= |x_1|$.  A slightly different kind of blow-down limit arises when we consider appropriate vertical translations of $H$ in addition to rescaling. It is not hard to show that 
\[
\frac{a}{t}H(t x_1/a, t x_2/a + \pi/2 + \cosh t) \to TP_{2a}(x_1-a,x_2) \quad \text{as}\quad t\to \infty 
\]
where $TP_{2a}(x)$ is the \emph{two-plane} solution of \eqref{FBP}
\[
TP_{2a}(x) = (x_1)^+ + (-x_1 - 2a)^+, \qquad a>0.
\]

\section{Rigidity of solutions with simply connected positive phase}

In this section we sketch the proof of Theorem \ref{thm:mainsimplyconn} 
(Theorem 1.3 \cite{JK}).   Consider a classical solution $u$ in $\DD$.  If
only one component of $\DD^0(u)$ meets $B_{2c}(0)$, then there
is a bound on the curvature of $F(u)$ in $B_c(0)$, depending only on $c$
(see Proposition 8.3 of \cite{JK}).  This corresponds to part (a) of
our theorem.  

If two components of $\DD^0(u)$ meet
$B_{2c}(0)$, and $c$ is sufficiently small, then Proposition 8.4 of \cite{JK}
says that at a larger scale, the configuration is approximated
by the wedge solution $|x_1|$ and can be described as in Lemma \ref{lem:flattypes}
(c).  For any $\e>0$ and $M<\infty$ we can choose $c$ sufficiently small and $r$
satisfying $Mc< r < 1/2$,  such that 
\[
|u(x) - |x_1|| \le \e r, \quad x\in B_r.
\]
Moreover, $\DD^0(u) \cap B_r$ consists of two components in
the strip $|x_1| < \e r$, both of which intersect $B_{2c}$.   
What remains is to estimate how closely this configuration resembles 
a double hairpin.

To begin with, we treat the scale comparable to the distance
separating the two components of the zero phase.  Corollary 10.4 of \cite{JK}
implies that for any $\e>0$ and any $M<\infty$, we can choose $c$ 
sufficiently small that $u$ has exactly one critical point $z_0$ in $B_{2c}(0)$
(a saddle point) and, defining $a = u(z_0)$, we have 
\begin{equation} \label{eq:localhairpin}
|\nabla^j [u(z_0+x) - H_a(\rho x)]| \le \e a^{1-j}, \quad |x| \le Ma, \quad 
j = 0, \, 1, \, 2, \,  3.
\end{equation}
for some rotation $\rho$.   In particular, the distance between
the two components of the zero phase is very close to $(2 + \pi)a$.
Recall that the distance between the components is at most $4c$. 

We summarize the proof of \eqref{eq:localhairpin} as follows.
Using a compactness argument based on Theorem \ref{thm:mainglobal},
one finds that given any $\e'>0$ and any $M'<\infty$, there is $c$ sufficiently small  such that for any $u$ with two zero components in $B_{2c}(0)$ 
there is $a'< c$ such that 
\[
|u(z'+ x) - H_{a'}(\rho \, x)| \le \e' a', \quad x\in B_{M'a'}(0)
\]
for some $z'\in B_{2c}(0)$ and some rotation $\rho$. The choice
of $z'$ and $a'$ are not unique.  They can be changed slightly without
violating the inequality.   To make a more precise choice,
following the proof of the corollary, 
we note that this uniform estimate,
non-degeneracy of $u$ and standard Alt-Caffarelli free boundary regularity estimates 
imply  that on $|x| \le M'a'/2$ the solution  and the double hairpin are close
in $C^3$ norm.   Because $H_{a'}$ has a unique
nondegenerate critical point at the origin, $u$ has exactly one critical point
$z_0$ very close to $z'$.  Hence $a = u(z_0)$ is very close to $a'$,
and \eqref{eq:localhairpin} follows.

To conclude the proof, we construct a conformal mapping
with a neighborhood of the double hairpin of size
comparable to $c$ rather than size $a$, which, in the case
not covered by the estimate \eqref{eq:localhairpin}, is much smaller than $c$.
Recall that the conformal mapping $U_a$ in Figure 2 maps half of
the double hairpin positive phase, $\Dee_a = \O_a \cap \{x_1>0\}$ to 
the slit half plane $\S_a$.  

The function $u$ is harmonic on $B_{10c}(z_0) \cap \DD^+(u)$.  Because that
set is simply connected, the harmonic conjugate $\tilde u$ is defined
up to an additive constant and we can set $U = u + i\tilde u$. 
By analogy with the model case of the double hairpin, denote by $\mathcal{A}\_\pm$
the curves of steepest descent of $u$ from the critical point $z_0$ to
the two components of the zero set of $u$.  The union of these two
curves divides $B_{10c}(z_0)\cap \DD^+(u)$ in half.  Fix one such
half and call it $\Dee$.  Orient the boundary of $\Dee$ so the
region is to the left.  In counterclockwise order, the boundary of $\Dee$ 
follows an arc of $\de B_{10c}(z_0)$, then an arc of $F(u)$, which we label
$\mathcal{E}_+$, then the arc from the component of $\DD^0(u)$
to the critical point, which we label $\mathcal{A}_+$, the arc from
the critical point to the other component of $\DD(u)$ which
we label $\mathcal{A}_-$, then a second arc of $F(u)$ which we label
$\mathcal{E}_-$, returning to the arc of $\de B_{10c}(z_0)$.   Recall that 
$u$ is zero on $F(u)$ and $u(z_0) = a$.  With the normalization for the 
conjugate $\tilde u$ given
by $\tilde u(z_0) = 0$, and noting that $\tilde u$ is constant on the steepest descent lines, we find that $U$ goes from $a$ to $0$ along both $\mathcal{A}_\pm$.
Thus $U$ is a conformal mapping from $\Dee$ onto its image
in the slit right half plane $\S_a$.
Moreover, referring to the diagram in Figure 2 of
$\S_a$ bounded by $E_\pm \cup A_\pm$, under the mapping
$U$, $\mathcal{E}_+$ maps to a subset
of $E_+$, $\mathcal{A}_+$ maps to $A_+$, $\mathcal{A}_-$ 
maps to $A_-$, $\mathcal{E}_-$ maps to a subset of $E_-$.

On $\Dee$, $U$ has a well-defined inverse and we set
\[
\Psi : = U^{-1} \circ U_a,
\]
a mapping that sends the subset $\Phi_a(U(\Dee))$ of $\Dee_a$ to $\Dee$.   
By the same reasoning, $U$ sends
$\Dee' : = (B_{10c}(z_0) \cap \DD^+(u)) \setminus \Dee$ to $\S_a$.
When the boundary of $\Dee'$ is traced with $\Dee'$ to the left, i.~e.,
counterclockwise, the boundary correspondence at the overlap
$\mathcal{A}_\pm$ is traced in the opposite orientation. 
$\Psi : = U^{-1}\circ U_a$ is also well defined on the complementary subset
$-\Phi_a(U(\Dee'))$ of $\O_a \setminus \Dee_a$.  
The two parts of the mapping 
$\Psi$ are consistent, that is, continuous across $\mathcal{A}_\pm$
because of the rotation by $180^\circ$.   Put another
way, both $U_a$ and $U$ are double coverings of $\S_a$.
Thus the composition $U^{-1}\circ U_a$ is well defined 
because the branch points coincide at the point $a$, the
tip of the slit in $\S_a$.

It remains to show that the estimates stated in Theorem \ref{thm:mainsimplyconn}
are valid for $\Psi$.    There is nothing to prove if $c \le Ma$, so we can assume $a \ll c$. 
As in the proof of the effective removable singularities
theorem, we consider the logarithmic derivative, 
$\Psi' = e^{f + i\tilde f}$, defined on, say, $B_{8c}\cap \O_a$, which
is a subset of $\Phi_a\circ U(\Dee) \cup (-\Phi_a\circ U(\Dee'))$.  
The estimates we want to prove for $\Psi$ are then deduced from
estimates for a linear boundary value problem for the harmonic
function $f$ as follows.     

On the annulus $B_{8c}(z_0)\setminus B_{4c}(z_0)$, 
$F(u)$ as close as we like  (in say $C^3$ norm) to four horizontal lines near
the axes.  Thus for any fixed $\e>0$ we can choose $c$ sufficiently
small and $M$ sufficiently large that $\Psi$ is within $\e$ of an isometry on
$\O_a \cap B_{8c}\setminus B_{4c}$.
\[
|\Psi'| = 1 + O(\e) \quad \text{on} \quad \O_a \cap (B_{8c}\setminus B_{4c}).
\]
Combining this with the maximum principle for $f$, and $e^f = |\Psi'| = 1$ on $B_{8c} \cap \de\O_a$, we 
have 
\[
|f|  \le  \e \ \mbox{on} \ B_{8c} \cap \O_a; \quad f = 0 \ \mbox{on} \ B_{8c} \cap\de \O_a \, .
\]
From this we can deduce an estimate up to boundary the gradient of $f$:
\[
|\nabla f| \le C\e \ \mbox{on} \ B_{4c} \cap \O_a,
\]
with an absolute constant  (in particular one that is independent of $a$).
This is because $H_a$ is a positive harmonic function on $\O_a$ with zero boundary conditions
and satisfies $|\nabla H_a| = 1$ on $\de \O_a$, and by standard barrier argument
this harmonic function majorizes Green's function, giving a slope bound that is 
independent of $a$.   This bound on $|\nabla f|$ gives the bounds on $\Psi'$ and $\Psi''$
of Theorem \ref{thm:mainsimplyconn}.  (See \cite{JK} for further details.)

\begin{remark} One special feature of the proofs of Theorems \ref{thm:removable} 
and \ref{thm:mainsimplyconn}  that limit 
them to two dimensions is that they rely on the classification of global solutions, which
is only carried out in two dimensions.  The other is the reliance on conformal mapping.   
In the proof of Theorem \ref{thm:removable}, the nonlinear estimates required are reduced via the conformal
mapping $\Phi$ to estimates for $h = \log|\Phi'|$ in a {\bf linear} boundary value problem on half annuli of
the form $\{z= x_1 + ix_2\in \C: x_1 >0, \ cd < |z| < 1/c\}$.  See the proof of Theorem \ref{thm:effremov-weak},
and, in particular, the uniform flux estimate Lemma \ref{lem:semicircle}.  The key point is the uniformity in $d>0$ 
as $d\to 0$.    Likewise in the proof of Theorem \ref{thm:mainsimplyconn}, estimates for $\Psi'$ and $\Psi''$ are reduced
to linear estimates for $f = \log |\Psi'|$ on $B_{8c}\cap \O_a$, and the key point is a different flux estimate, stated
in the paragraph just preceding this remark, that is uniform
in the parameter $a>0$ as $a \to 0$. 
\end{remark}

\section{Entire multiply connected solutions}

After discovering the double hairpin solution in \cite{HHP}, the authors conjectured that the list of entire classical solutions having \emph{connected, finitely connected} positive phase, is very short, consisting, up to rigid motion, of
just $P$,  $\{H_a\}_{a>0}$ (with simply connected positive phase), and the disk complement solution 
$L_R(x):=R(\log(x/R))^+$, $R>0$.

In the special case of simple connectivity, the conjecture was confirmed by Khavinson, Lundberg and Teodorescu \cite{KhavLundTeo}. Almost concurrently, Martin Traizet \cite{Traizet} resolved the full conjecture by drawing out a remarkable correspondence between classical solutions $u$ with $|\nabla u|<1$ in their positive phase \mbox{$\O \subseteq \R^2$} and \emph{minimal bigraphs} --- complete embedded minimal surfaces in $\R^3$ having a plane of symmetry, over which each half is a graph. Specifically, the map 
\begin{equation*}\label{Traizet}
\begin{array}{lrcc}
T :& \O  & \rightarrow &  \R^2 \times  \R^+\\
& z & \rightarrow & (X_1(z), X_2(z), u(z)),
\end{array}
\end{equation*}
where $X_1$ and $X_2$ are obtained by integrating the differential 
\[
 dX_1 + idX_2 = \frac{1}{2}\Big(d\bar{z} -  \left(2\frac{\de u}{\de z}\right)^2 dz\Big),
\]
can be shown to define a minimal immersion of the positive phase $\O = \{u>0\}$ of a classical solution $u$ of \eqref{FBP} into the upper half-space $\R^2 \times  \R^+$. The free boundary condition  $|\nabla u| =1$ on $F(u)$ means that the immersed mimimal surface $T(\O)$ attaches \emph{orthogonally} to the plane $X_3 = 0$, so that $T(\O)$ can be completed  to a minimal surface $M$ that has a plane of symmetry. Furthermore, Traizet showed that if $|\nabla u|<1$ in $\O$, then $M$ is an \emph{embedded} minimal surface, and if $u$ is an \emph{entire} classical solution satisfying $|\nabla u|<1$ in $\O$, then $M$ is a \emph{complete} embedded minimal bigraph. The converse is also true: any complete embedded minimal bigraph $M$ gives rise to an entire classical solution of \eqref{FBP} with $|\nabla u| <1$ in the positive phase. 

Under the correspondence, an entire classical solution with finitely connected positive phase (that is not the one plane $P$) gets associated to a minimal bigraph $M$ with two ends and finite topology. By the classification theorem of Schoen \cite{schoen1983uniqueness} $M$ has to be a catenoid. In this way, the disk-complement solution $L_R$ is revealed to be the counterpart to the vertical catenoid, and the double hairpin $H_a$ --- to the horizontal catenoid!

Utilizing the correspondence further, Traizet rediscovered a family of \emph{infinitely} connected periodic solutions of \eqref{FBP} that correspond to the \emph{Scherk simply periodic surface}, viewed as a minimal bigraph.  To celebrate this connection, we will be referring to these solutions as \emph{Scherk solutions}. In fact, the family had been first discovered by Baker, Saffman and Sheffield \cite{BakerHollowVortices} in the fluid dynamics literature, and below we will provide a description using conformal mapping more in tune with their approach.   

The family of Scherk solutions comprises, up to rigid motions,
\begin{equation*}\label{Scherk}
\{ S_{s,a}: \R^2 \rightarrow \R: S_{s,a}(x) = a S_{s}(x/a):  0<s<1, 0<a<\infty \},
\end{equation*}
where $S_s(x_1,x_2)$ is $2\pi $-periodic along $x_2$, reflection-symmetric with respect to both the $x_1$ and $x_2$ coordinate axes
\[
S_s(x_1, x_2+2\pi) = S_s(x_1, x_2), \quad S_s(\pm x_1, x_2) = S_s(x_1, x_2) = S_s(x_1,\pm x_2),
\]
and blows down to the \emph{wedge solution} $W_s$: 
$$\lim_{a\to 0} S_{s,a}(x) = W_s(x) = s|x_1| \quad \text{of slope} \quad 0<s<1. $$
Denote the positive phase of $S_s$ by
\[
\Omega^{\text{BSS}}_s := \{S_s > 0\}.
\]
and let the perimeter of each closed loop $\g$ of $F(S_s)$ be $2l$ (see Figure \ref{Scherk_slopes} for a plot of $F(S_s)$). 
\begin{figure}[h]
\centering \includegraphics[scale=0.6]{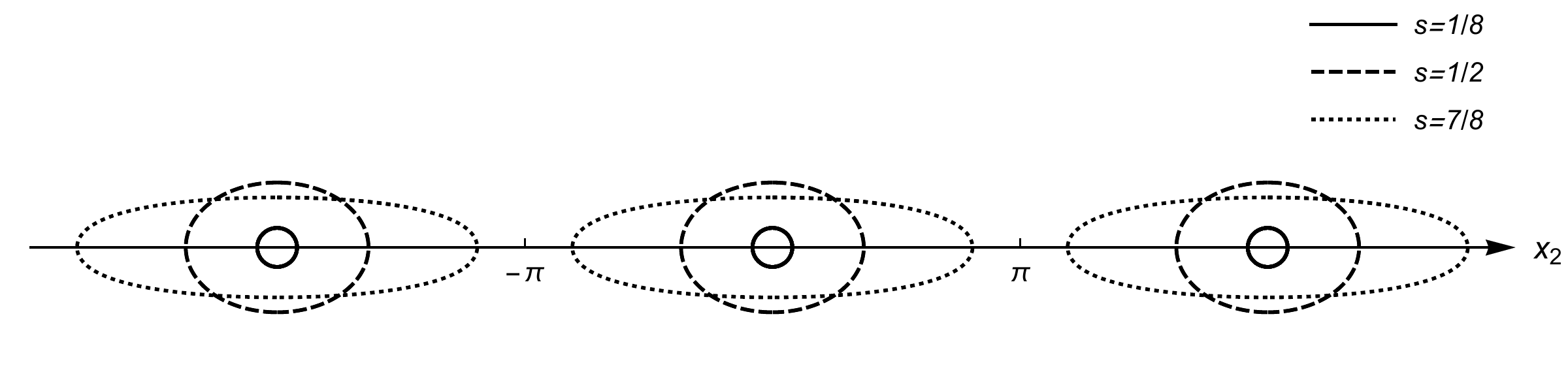} \caption{Mathematica plot of the free boundary of the Scherk solution $S_s(x_1,x_2)$ for asymptotic slopes $s=1/8$, $s=1/2$ and  $s=7/8$. Note that in the diagram $x_2$ is the horizontal axis.}
\label{Scherk_slopes}
\end{figure}

\noi Over the domain $$\mathcal{D} = \{(x_1, x_2)\in \R^2: x_1 > 0, |x_2| < \pi \}$$ the part of the positive phase $\mathcal{D}^{\text{BSS}}_s:=\Omega^{\text{BSS}}_s \cap \mathcal{D}$ is simply connected and bounded by $\partial \mathcal{D}$ and the \emph{right} half of $\g$ (see Figure \ref{Scherk_conf}). There one can define a holomorphic extension of $S_s$ 
\[
U_s^{\text{BSS}} := S_s + i \tilde{S_s} : \mathcal{D}^{\text{BSS}}_s \to \C
\]
normalized so that $U_s^{\text{BSS}}$ maps $\mathcal{D}^{\text{BSS}}_s \cap \{x_2 = 0\}$ into the real axis. 
\begin{figure}[h]
\centering 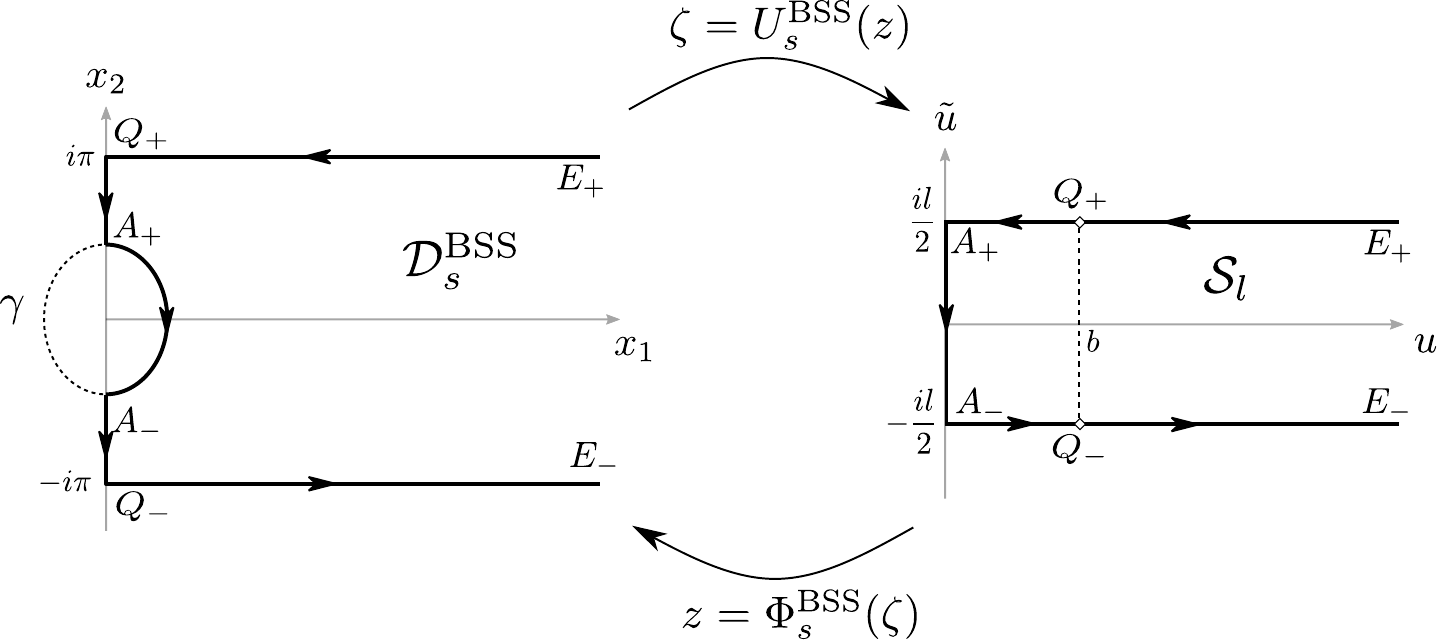 \caption{Mapping the subdomain $\mathcal{D}^{\text{BSS}}_s$ of the positive phase of the Scherk solution $S_s$ conformally onto the strip $\mathcal{S}_l$ under $U_s^{\text{BSS}} = S_s + i \tilde{S}_s.$ Note that $Q_{\pm}$ is a saddle point of $S_s$ with $Q_{\pm}A_{\pm}$ and $Q_{\pm}E_{\pm}$ being a steepest descent and a steepest ascent path from $Q_{\pm}$, respectively. }
\label{Scherk_conf}
\end{figure}

For the sake of notational simplicity we shall drop the superscript ``BSS.'' The map $U_s$  is biholomorphic onto its image $$U_s(\mathcal{D}_s)  = \mathcal{S}_l:=\{\zeta = u+i \tilde{u}\in \C: u > 0, |\tilde{u}| < l/2 \} $$ and has a holomorphic inverse
\[
\Phi_s = (U_s)^{-1}: \mathcal{S}_l\rightarrow \mathcal{D}_s
\]
given by $\displaystyle d \Phi_s/d \zeta = e^{\varphi_s(\z)}$, where 
\begin{equation}\label{eqphi}
\varphi_s(\z) = -\frac{1}{2} \log\left(e^{2\pi(\z-b)/l} + 1 \right) + \frac{1}{2} \log\left(e^{-2\pi(\z+b)/l} +1 \right)  + \frac{\pi \z}{l}.
\end{equation}
In the formula above we use the standard branch of the logarithm. Observe that $b=S_s(Q_{\pm})$ is precisely the value of $S_s$ at the \emph{saddle points} $Q_{\pm}= \pm i\pi$ between two consecutive loops (the half-logarithmic singularities of $\varphi_s$ as $\zeta \to b\pm i l/2$ precisely account for the saddle point behaviour of $S_s$ near $Q_{\pm}$). Note also that $\Re \varphi_s(\z) \to 0$ as $\z \to i\tilde{u}$ in accordance with the free boundary condition. 

The relation between $b$, $s$ and $l$ can be found by observing
\begin{equation}\label{eqn:Scherk_ParamRel}
\frac{1}{s} = \lim_{\Re z \to \infty} \frac{1}{U_s'(z)} =  \lim_{\Re \z \to \infty}\Phi_s'(\z) =  \lim_{\Re \z \to \infty} e^{\varphi_s(\z)} = e^{\pi b/l}.
\end{equation}
In addition, a simple flux computation for $\nabla S_s$ through $\partial (\mathcal{D}_s \cap\{x_2\leq T\})$ in the limit  $T \to \infty$ yields
\begin{equation}\label{eqn:Scherktrivia1}
l = 2\pi s
\end{equation}
so that \eqref{eqn:Scherk_ParamRel} implies that in terms of the asymptotic slope $s$,
\begin{equation}\label{eqn:Scherktrivia2}
b =  2 s \log(1/s).
\end{equation}
From \eqref{eqphi} we can see that $\Re\varphi_s(u + i\tilde{u})$ is even in $\tilde{u}$ and $\Im \varphi_s(u + i\tilde{u})$ is odd in $\tilde{u}$, so that 
$\Im \int_0^{u+i\tilde{u}} e^{\varphi_s(\eta)} d\eta$ defines an odd function in $\tilde{u}$. Hence,  
\[
\Phi_s (\z)= \int_0^{\z} e^{\varphi_s(\eta)} d\eta + c_s
\]
where $c_s$ is an appropriate \emph{real} constant that ensures $\Re \Phi_s(\zeta) \to 0$ as $\zeta \to \pm il/2.$  Now the domain $\mathcal{D}_s = \Phi_s(\mathcal{S}_l)$ and the Scherk solution $S_s$ over $\mathcal{D}$ is given by
\[
S_s(z) := \left\{ \begin{array}{ll} \Re \z & \text{when}\quad z = \Phi_s(\z), \quad \z \in \mathcal{S}_l\\ 0 & \text{for all other } z\in \mathcal{D}.  \end{array} \right.
\]
One can also compute an equation for the loop $\g$ in the $x_1, x_2$ coordinates. We know that the right half of the loop $\g$ in the $x_1x_2$--plane is the image of $\{\z=i\tilde{u}: |\tilde{u}|<l/2\}$ under $\Phi_s$, where 
\[
\Phi_s (i\tilde{u})= i \int_0^{\tilde{u}} e^{\varphi_s(i t)} dt + c_s =  \int_0^{\tilde{u}} \left(- \sin\big(\Im \varphi_s(it)\big) + i \cos\big(\Im \varphi_s(it)\big)\right) dt + c_s
\]
with 
\[
\Im \varphi_s(it) = \frac{\pi}{l} t - \arctan\left(\frac{e^{-2\pi b/l} \sin (2\pi t/l) }{1+ e^{-2\pi b/l}\cos (2\pi t/l)}\right) = \frac{t}{2s} -\arctan\left(\frac{s^2 \sin (t/s) }{1+ s^2\cos (t/s)}\right).
\]
Then the Euclidean coordinates of $\gamma$ integrate to the parametric equations 
\begin{align*}
x_1(\tilde{u}) &= (1 - s^2) \log \left(\frac{\sqrt{1+s^4 + 2s^2 \cos(\tilde{u}/s)} + 2s \cos(\tilde{u}/(2s))}{1-s^2}\right) \\
x_2(\tilde{u}) & =  (1 + s^2) \arctan \left(\frac{2s \sin(\tilde{u}/(2s))}{\sqrt{1+s^4 + 2s^2 \cos(\tilde{u}/s)}}\right)
\end{align*}
from which $\tilde{u}$ can be eliminated to yield
\begin{equation}\label{eqn:ScherkBubble}
(1-s^2) \cosh\left(\frac{x_1}{1-s^2}\right) = (1+s^2) \cos\left(\frac{x_2}{1+s^2}\right).
\end{equation}

\section{Final Remarks and Open Questions}

In this section we formulate some questions about which solutions can arise as limits of classical solutions 
in dimensions two and three as well as analogous questions for solutions to semilinear elliptic equations.

Traizet has shown that the only entire classical solutions with finite topology
are the half plane, the double hairpins, and the circle.   In keeping with this characterization,
we conjecture that these two configurations are in essence the only ones one sees
at finite scale. 
\begin{conjecture}  \label{conj:finitetop} 
There are absolute constants $C$ and $r>0$ such that 
for every classical solution $u$ in the unit disk $\DD$ such
that the free boundary $F(u)$ consists of at most N curves,
there are points $p_j\in B_{2/3}(0)$, $j = 1,\, 2,\, \dots \, M$, $M\le N$
such that $B_r(p_j)\cap F(u)$ is nearly isometric to a double hairpin or to a single circle, 
and in the complement
\[
B_{1/2}(0) \setminus \bigcup_{j=1}^M B_r(p_j)
\]
$F(u)$ has  curvature bounded by $C$.
\end{conjecture}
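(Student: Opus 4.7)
The plan is to argue by contradiction and compactness, in the spirit of Theorem \ref{thm:mainglobal} and Theorem \ref{thm:mainsimplyconn}. Suppose the conjecture fails: then for any candidate $C, r$ one can exhibit a classical solution $u$ in $\DD$ with $\#F(u)\le N$ that violates the conclusion. Choose sequences $C_k\nearrow\infty$, $r_k\searrow 0$, and corresponding counterexamples $u_k$. The uniform Lipschitz bound of Lemma \ref{lem:Lipschitz1} yields a subsequential uniform limit $U$ on compact subsets of $\DD$. Identify the concentration set $\Sigma_k\subset B_{2/3}(0)\cap F(u_k)$ as the set of points where either the curvature of $F(u_k)$ exceeds $C_k$ or where $F(u_k)$ fails to be nearly isometric at scale $r_k$ to a single half-plane. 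The goal is to show that $\Sigma_k$ can always be covered by at most $N$ balls of a uniform radius $r_0$ in which $F(u_k)$ resembles one of the two model configurations.

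Next, localize around each concentration point. For $q_k\in\Sigma_k$ choose the smallest scale $\rho_k$ at which the rescaled solution $\tilde u_k(z)=\rho_k^{-1}u_k(q_k+\rho_k z)$ departs from a half-plane or a wedge profile. The rescales satisfy the same topological constraint $\#F(\tilde u_k)\le N$, are uniformly Lipschitz, and by the minimality of $\rho_k$ inherit the non-degeneracy required to apply the analogue of Proposition \ref{prop_limitofsolns}. Pass to a subsequential weak entire limit $\tilde U$. Traizet's correspondence identifies entire classical solutions with $|\nabla \tilde U|<1$ and finite topology with embedded minimal bigraphs of two ends and finite topology, which by Schoen's theorem must be catenoids; together with an extension to weak limits in the spirit of Theorem \ref{thm:mainglobal}, this forces $\tilde U$ to be, up to rigid motion, either $H_a$ (horizontal catenoid) or $L_R$ (vertical catenoid). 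The associated descriptions of $F(\tilde U)$ yield the nearly isometric approximation by a double hairpin or a single circle on a fixed scale; promoting this approximation from the rescaled scale $\rho_k$ up to a uniform $r_0$ is then effected outside the hairpin neck or circle by Alt-Caffarelli regularity, exactly as in Section 4 of this paper.

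The cardinality bound $M\le N$ follows topologically: each hairpin core joins two strands of $F(u_k)$ at its neck, and each disk-complement core contributes a closed loop, so the number of concentration points is bounded by the number of available curves. Outside $\bigcup_j B_{r_0}(p_j)$, at every point of $F(u_k)$ only a half-plane blow-up is possible at scale $r_0$, and standard Alt-Caffarelli regularity produces curvature bounded by an absolute constant $C$. The principal obstacle, and the reason the statement remains only a conjecture, is that most of the ingredients available in the simply connected setting (the non-degeneracy of Lemma \ref{lem:nondegeneracy1}, the rigidity of global weak limits in Theorem \ref{thm:mainglobal}, and the flux-balance arguments behind Lemma \ref{lem:flattypes}) were tailored to solutions with simply connected positive phase. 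Upgrading them to the multiply connected setting — in particular extending the classification of global viscosity/variational limits from the classical list $\{P, H_a, L_R\}$ to its weak closure, and ruling out a bubble tree of hairpins formed on a cascade of smaller scales — is the core technical difficulty. This mirrors the bubbling analysis for embedded minimal surfaces of bounded genus in the Colding-Minicozzi program, where precisely the same separation-of-scales issue is the decisive step.
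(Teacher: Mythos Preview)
The statement you are addressing is \emph{Conjecture}~\ref{conj:finitetop}, and the paper does not prove it: it appears in Section~8 among open problems, with no argument offered beyond the motivating remark that Traizet's classification of entire solutions with finite topology suggests the local picture should consist only of hairpin and circle configurations. There is therefore no paper proof to compare your proposal against.

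Your write-up is not a proof either, and you say so yourself in the final paragraph. What you have produced is a plausible strategy outline together with an honest inventory of the obstructions. That inventory is accurate: the non-degeneracy of Lemma~\ref{lem:nondegeneracy1}, the global classification Theorem~\ref{thm:mainglobal}, and the flux-balance case analysis of Lemma~\ref{lem:flattypes} all lean on simple connectivity (each strand of $F(u)$ reaching $\partial B_R$), and none of them is known in the multiply connected setting. Without non-degeneracy you cannot guarantee that your rescaled limits $\tilde U$ are nontrivial; without a multiply-connected analogue of Theorem~\ref{thm:mainglobal} you cannot assert that a \emph{weak} limit of classical solutions with $\le N$ curves is itself a classical entire solution, which is what Traizet's theorem requires as input. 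The ``bubble-tree'' issue you flag --- that a hairpin neck at scale $\rho_k$ might itself contain further necks at scales $\ll\rho_k$, so that a single concentration ball fails to capture the full high-curvature set --- is exactly the separation-of-scales difficulty that prevents the simply connected argument from going through, and your proposal offers no mechanism to rule it out. In short, the strategy is the natural one and the obstacles you name are the real ones; but identifying them is not the same as overcoming them, and the statement remains open.
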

If the radius of an approximate circle in $F(u)$ is $\rho$,  then a compactness argument and the theorem of
Traizet shows that are no other circles within a multiple of $\rho$ of this circle.  The 
conjecture asserts that there are no other curves within $r$ of this circle, which is a much
larger distance than $\rho$.  This is in the same spirit as our theorem about double hairpins
in the simple connected case.

Recall that using  Caffarelli's notion of viscosity solutions we can show that 
blow down limits of entire classical solutions in the plane are of the form
$s_+x_1^+ + s_-x_1^-$ with $0 \le s_- \le s_+ \le 1$.  If both $s_\pm >0$,   
then we showed, using the fact that the solution is also a variational solution, 
that the slopes are equal:
$s=s_+ = s_->0$.  Note further that in the simply connected case, we also show that $s=1$, and 
that this arises in the case of the blow down of the simply connected double hairpin solution. 
The blow down limit of multiply connected Scherk type examples yields wedges of the form 
$W_s(x) = s|x_1|$ for each $s$, $0 < s <1$.     So far, at least, there does not appear to
be a situation in which unequal slopes $0< s_- < s_+$ arise.

We refer to the process in which a free boundary with large curvature blows
down to a straight line as a \emph{singular limit} and ask what singular limits can arise on
bounded domains.  We distinguish the case of finite topology from the 
case in which the connectivity tends to infinity.

\begin{question} \label{quest:finitetop}  
Suppose $u_k$ is a sequence of classical
solutions in the unit disk $\DD$, and $F(u_k)$ is an approximate
double hairpin that tends to a single connected curve.  Can one characterize
the limit curve in $B_{1/2}(0)$?  What pairs of connected limit curves can arise as limits of two double hairpins?  
More generally, what are all singular limits of $F(u_k)$ in the case of finite topology  
($F(u)$ has at most $N$ curves in $\DD$)? 
\end{question}
We expect limits of the classical solutions $u_k$ in Question \ref{quest:finitetop} to solve
a two-sided boundary problem with slope $1$ on each side, in 
keeping with the example of the entire hairpin, which tends to the wedge solution $W_1$.

\begin{question} \label{quest:infinitetop}  
Suppose $u_k$ is a sequence of classical
solutions in the unit disk $\DD$, the number of components of $F(u_k)$ tends infinity 
but the limit in Hausdorff distance is a connected curve.  Can one characterize
the limit curve in $B_{1/2}(0)$?  
\end{question}
We expect limits of the classical solutions $u_k$ in Question \ref{quest:infinitetop} to solve
a two-sided boundary problem with equal slope on each side, but
not necessarily slope $1$, in keeping with the Scherk example, which tends to a wedge
solution $W_s$. 

Just as every minimal surface minimizes area on sufficiently small balls, any classical solution
to the free boundary problem minimizes the functional in sufficiently small disks with fixed
boundary conditions.  On the other hand, the hairpin and the circle are not stable on larger disks.
\begin{question} \label{quest:Morse}  Is there a relationship between the topology
of solutions and their Morse index relative to the Alt-Caffarelli functional at appropriate scales?
\end{question}
\noi Results of this flavor abound in the classical minimal surface literature --- see \cite{chodosh2016topology} and references therein. Similar questions were studied by \cite{KWangWei-AC-finite-ends, KWang-MorseIndex-FB}  in the case of the Allen-Cahn equation and free boundary problem variants of it in 2 dimensions.
A construction of a solution with higher Morse index in a two-phase free boundary problem can be found in \cite{JP}.

As we have seen, the generalized notion of viscosity solutions introduced by Caffarelli is 
crucial to the study of higher order critical points of the functional.  But one of
the original purposes was to study solutions from a particular
version of the Perron process in which one considers infima of strict supersolutions.
Caffarelli in \cite{CafIII} succeeded in proving a partial regularity result for such solutions
that shows that their free boundaries are smooth except on a set of zero $(n-1)$-Hausdorff
measure.  Many years ago, when one of us asked Luis what the impediment to further regularity was,
he replied that he could not rule out a highly disconnected free boundary formed
from a collection of smaller and smaller bubbles.  We see this phenomenon 
vividly in the Scherk example.   But if the entire solutions we already know
about are the only ones, then this suggests that the Perron solutions are more like
minimizers of the functional.   At sufficiently large scale, neither the double hairpin solution nor the Scherk type 
solution is the infimum of strict supersolutions.    (The strictness is crucial:  solutions are themselves supersolutions,
so they can always be realized as the infimum of supersolutions.)

\begin{conjecture} \label{conj:Perron}   
Solutions obtained as the infimum of strict supersolutions
as in \cite{CafIII} are classical solutions in dimension two.  
\end{conjecture}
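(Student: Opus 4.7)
The plan is to argue by contradiction and blow-up analysis. Let $u$ be a Perron solution in a bounded domain $\O\subset\R^2$, obtained as the infimum of a family of strict supersolutions $\{w_\alpha\}$ with fixed Dirichlet data on $\partial\O$. By Caffarelli's work in \cite{CafIII}, $u$ is a Lipschitz, non-degenerate viscosity solution of \eqref{FBP} whose free boundary $F(u)$ is smooth outside a closed subset $\Sigma\subset F(u)$ of $\mathcal{H}^1$-measure zero; in the plane, this forces $\Sigma$ to be discrete. It suffices to show $\Sigma=\emptyset$. Suppose, for contradiction, that $z_0\in\Sigma$. By Proposition \ref{prop_blowups}, every blow-up of $u$ at $z_0$ is either the half-plane $P$ --- in which case Alt--Caffarelli flatness regularity would force $z_0\notin\Sigma$ --- or a wedge $W_s$ with $0<s\le 1$. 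So a wedge blow-up $W_s$ must occur along some sequence $r_j\downarrow 0$, meaning that $F(u)$ near $z_0$ consists, after rescaling, of two smooth strands meeting tangentially at $z_0$ with common outgoing slope $s$.

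The heart of the proof is to exclude every wedge blow-up using the \emph{strictness} in the Perron construction. Recall that a strict supersolution $w$ satisfies the boundary slope condition $|\nabla w(y_0)|\le 1-\eta(w)$ at each $y_0\in F(w)$ with an interior tangent ball, together with $\Delta w\le -\eta(w)$ in $\{w>0\}$. The key claim is that if $u$ has a wedge blow-up at $z_0$, then one can build a comparison function $\underline u$ that is pointwise $\le u$ on $\O$, \emph{strictly} less than $u$ somewhere in a small ball $B_\rho(z_0)$, and dominated by every $w_\alpha$ in the Perron family, which would contradict the identification $u=\inf_\alpha w_\alpha$. The candidate for $\underline u$ is obtained by gluing a rescaled double hairpin $H_a$ (in the case $s=1$) or a Scherk bubble with asymptotic slope $s$ (in the case $s<1$), given explicitly by \eqref{eqn:ScherkBubble}, to $u$ across $\partial B_\rho(z_0)$. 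The descriptions in Sections 5 and 7 show that these fillings have the right wedge profile $W_s$ at infinity and so naturally close off the cuspidal zero phase created by the wedge. That $\underline u\le w_\alpha$ for every $\alpha$ should then follow from the strictness gap: $w_\alpha$ has slope strictly less than $1$ on $F(w_\alpha)$ while the filled free boundary has slope exactly $1$, giving a Harnack-chain comparison $w_\alpha-\underline u\ge c\,\mathrm{dist}(\,\cdot\,, F(\underline u))$ in the overlap region.

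The principal obstacle is precisely this gluing and the uniform comparison with the family $\{w_\alpha\}$. Away from $z_0$ the free boundary $F(u)$ is smooth and, after rescaling, very close to a pair of rays meeting tangentially, but the match to $H_a$ or the Scherk bubble on $\partial B_\rho(z_0)$ will never be exact. Producing a genuine strict subsolution $\underline u$ that respects both the boundary slope lower bound $|\nabla \underline u|\ge 1$ on $F(\underline u)$ and superharmonicity in the overlap --- while simultaneously being dominated by every strict supersolution in the approximating family --- requires a quantitative refinement of Caffarelli's strictness notion that is preserved under blow-up. Moreover, completing the $s<1$ case depends on the classification of multiply connected entire solutions, itself one of the open problems of Section 8: if the Scherk family exhausts the entire solutions with wedge blow-downs of slope $s\in(0,1)$, the argument closes; otherwise a new family of fillings must be produced. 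The interplay of these two points --- quantitative strictness and complete classification --- is where we expect the hard work of the conjecture to lie.
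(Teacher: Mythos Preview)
The statement you are addressing is Conjecture~\ref{conj:Perron}: the paper offers no proof, only the heuristic remark preceding it that ``at sufficiently large scale, neither the double hairpin solution nor the Scherk type solution is the infimum of strict supersolutions.'' So there is no paper proof to compare against; your proposal is an attack on an open problem, and you yourself flag in the final paragraph that the gluing step and the classification of multiply connected entire solutions are unresolved. Those caveats are accurate and by themselves mean the argument is a strategy sketch rather than a proof.

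Beyond the acknowledged gaps, there is a logical misdirection in the comparison step. You build a function $\underline u$ that is \emph{below} $u$ and dominated by every $w_\alpha$, and claim this contradicts $u=\inf_\alpha w_\alpha$. It does not: $\underline u\le w_\alpha$ for all $\alpha$ merely gives $\underline u\le u$, which is consistent with $\underline u<u$ somewhere. To contradict a Perron infimum you must produce a member of the admissible class (a strict \emph{supersolution} with the right boundary data) that dips strictly below $u$; equivalently, you must show that near a wedge point one can lower the infimum. Your hairpin/Scherk filling goes the wrong way: it opens up the cuspidal zero set, hence is \emph{larger} than $u$ there, and its free boundary has slope exactly $1$, so it is not a strict supersolution either. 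The paper's heuristic points in the correct direction --- hairpin and Scherk profiles are \emph{not} realizable as infima of strict supersolutions at large scale --- but turning that into a local replacement argument requires constructing a strict supersolution competitor, not a subsolution. A smaller point: $\mathcal H^1(\Sigma)=0$ with $\Sigma$ closed does not by itself force $\Sigma$ to be discrete (a Cantor set is a counterexample), so that reduction also needs justification.
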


\begin{question} \label{quest:Perron}  Are solutions obtained as the infimum of strict supersolutions
as in \cite{CafIII} also stable for the Alt-Caffarelli functional? or well approximated by stable solutions,
hence as regular as such solutions?
\end{question}

We turn now to higher dimensional free boundary problems.   One of the first problems is to
classify entire solutions with given topological constraints.  The most basic question is the following.

\begin{question}  Suppose that $u$ is an entire classical solution in $\R^3$ to the one-phase free boundary 
problem whose positive and zero phase are both contractible.  Is $u$ equal to a half space solution $x_1^+$
after rigid motion?  If the solution is defined in the unit ball $\B$ and both
the positive and zero phase are contractible, is its free boundary
smooth with uniform bounds on the concentric ball of radius $c$ for some
absolute constant $c>0$? 
\end{question}

So far, the only examples of entire classical solutions in three dimensions besides the half space solution 
are the product domain of the disk complement with a real line, the product domain of the double hairpin of Hauswirth {\em et al.}  with a real line, and a solution
that is symmetric around an axis of rotation constructed by Liu, Wang and Wei \cite{LWW}.  The latter two
solutions blow down to wedge solutions, $|x_1|$.  
As Liu {\em et al.} suggest (see their Remark 2) 
there should also be smooth solutions that blow down to the conic,
axially symmetric  Alt-Cafarelli solution.  More generally, we propose the following questions.

\begin{question} \label{quest:symmetry}  Are there entire homogeneous solutions to the one-phase free boundary
problem that are symmetric with respect to the discrete $\Z_n$ action by rotation around an axis, for each $n$
in analogy with the case of the Alt-Caffarelli example with $\Z_2$ symmetry?  What about other discrete 
subgroups of the rotation group $SO(3)$?
\end{question}

\begin{question} \label{quest:desingularized}  Are there classical entire solutions to the one-phase free boundary
problem that are asymptotic to entire homogeneous solutions at infinity?
\end{question}

There are (at least) two different ways to formulate a conjecture in three dimensions
extending the flat-implies-Lipschitz theorem of Alt and Caffarelli to cases
with holes.  The first way says that if the solution is defined in 
a ball minus a narrow cylinder, each component of the positive phase is trivial, and the free boundary is sufficiently flat, then the only
possibility in addition to the half space solution is a solution that resembles one half of
the product solution associated to the double hairpin. 

To state this conjecture we set $\B_r = \{x\in \R^3: |x| < r\}$, $\CC_r = \{x\in \R^3: x_2^2 + x_3^2 < r^2\}$. 

\begin{conjecture}  
Suppose that $u$ is a classical solution to the one phase free boundary
problem in $\B_1 \setminus \CC_\d$ whose positive and zero
phase are contractible.
There are absolute constants $\d_0>0$, $\e>0$ and $C$ 
such that if $\d\le \d_0$, and the free boundary is flat, i.~e., 
for every $x= (x_1,x_2,x_3)\in \B \setminus \CC_\d$, 
\[
 x_1 > \e \implies u(x) >0 \quad \mbox{and} \ x_1 < -\e \implies u(x) = 0,
\]
then 
\[
F(u) \cap \B_{1/C} \cap \{x_2^2+x_3^2 > (C\d)^2\} = \{(x \in \B_{1/C}: 
x_1  = g(x_2, x_3), \ x_2^2+x_3^2 > (C\d)^2\}
\]
for some function $g:\R^2 \to \R$ with $|\nabla g | \le 1/100$.  
\end{conjecture}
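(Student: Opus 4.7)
The plan is to mirror the two-dimensional proof of Theorem \ref{thm:removable}. First I would establish a universal Lipschitz bound and a non-degeneracy estimate for classical solutions of \eqref{FBP} in $\mathbb{B}_1\setminus \mathcal{C}_\delta$, valid on $\mathbb{B}_{1/2}\setminus \mathcal{C}_{2\delta}$ with constants independent of $\delta$. The Lipschitz bound is a direct Harnack-chain adaptation of Lemma \ref{lem:Lipschitz2}, now with chains going around the cylinder $\mathcal{C}_\delta$ rather than around a point. The non-degeneracy estimate is obtained by integrating $u$ against the three-dimensional Green's function $G_r(x)=\tfrac{1}{4\pi}(1/|x|-1/r)$ of $\mathbb{B}_r$; the contribution from the cylindrical boundary $\partial \mathcal{C}_{2\delta}$ is of order $\delta \log(r/\delta)$ from the cross-sectional integration, just as in the planar case, giving $\sup_{\partial \mathbb{B}_r} u\ge c\, r$ for all radii $r$ that are large compared to $\delta \log(1/\delta)$.

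Next, I would establish the three-dimensional analogue of Lemma \ref{lem:flatness}: for every $\eta>0$ there exist $C=C(\eta), \delta_0=\delta_0(\eta)$ such that for $0<\delta<\delta_0$ the free boundary $F(u)$ is $\eta$-flat at every intermediate scale $r\in (C\delta, 1/C)$. I would prove this by a compactness-contradiction argument. Rescaling a hypothetical sequence of counterexamples produces solutions in expanding annular domains; by Step one they converge uniformly on compact subsets of $\mathbb{R}^3\setminus\{x_2=x_3=0\}$ to a limit $u_\infty$. The $x_1$-axis has locally finite $\mathcal{H}^1$-measure and therefore zero Newtonian capacity on compact sets, hence is removable for bounded harmonic functions; adapting Steps 1--4 of Theorem \ref{keyprop} extends $u_\infty$ to a variational and viscosity solution on all of $\mathbb{R}^3$. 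Because the global flatness hypothesis of the conjecture is scale-invariant and passes to the limit, $u_\infty$ is an entire classical solution with $F(u_\infty)\subset\{|x_1|\le\eta\}$; the Alt--Caffarelli flat-implies-smooth theorem followed by iterated improvement of flatness identifies $u_\infty= x_1^+$, contradicting the hypothesized failure of flatness. Given flatness at every scale, I would then apply Alt--Caffarelli in each ball $B_{r(x)}(x)$ with $r(x)=c\,\operatorname{dist}(x,\mathcal{C}_\delta)$ to obtain a $C^{1,\alpha}$-graph representation of $F(u)\cap B_{r(x)/2}(x)$ over a plane with unit normal $\nu(x)$ and gradient norm at most $C\eta$, and iterated improvement of flatness at dyadic subscales would yield $|\nu(x)-\nu(y)|\le C\eta$ whenever $y$ lies radially interior to $x$.

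The hard part will be the last step, the three-dimensional counterpart of Theorem \ref{thm:effremov-weak}: controlling the oscillation of $\nu(x)$ as $x$ moves \emph{around} the cylinder $\mathcal{C}_\delta$, with an estimate uniform in $\delta$. In two dimensions this is precisely where the conformal map $\Phi$ is used; the rotation is measured by the conjugate $\tilde h$ of $h=\log|\Phi'|$, and Lemma \ref{lem:semicircle} provides a linear flux bound in the half annulus that is uniform in the inner radius $d$. No conformal tool is available in three dimensions, so the natural substitute is to linearize around the half-space solution, writing $u= x_1^++w$ on the positive phase and deriving a uniform-in-$\delta$ oscillation estimate for the tangential components of $\nabla w$ on the family of loops $\{x_2^2+x_3^2=\rho^2\}\cap F(u)$ via a Carleson or harmonic-measure argument on the cylinder complement. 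The contractibility of the positive and zero phases enters at this stage: it guarantees that $F(u)$ is an annular hypersurface homotopy-equivalent to $S^1$, so the normal has a well-defined winding along a generator of $\pi_1(F(u))$ that can be represented as the boundary value on $\partial\mathcal{C}_\delta$ of a bounded harmonic function on $\mathbb{B}_1\setminus \mathcal{C}_\delta$. Once this oscillation is shown to be $O(\eta)$ uniformly in $\delta$, the local Alt--Caffarelli graphs glue by the implicit function theorem into a single Lipschitz graph $x_1=g(x_2,x_3)$ with $|\nabla g|\le 1/100$ on the required set, completing the proof. I expect this uniform-in-$\delta$ rotation estimate to be the crux and to require genuinely new three-dimensional techniques beyond the conformal-mapping approach of \cite{JK}.
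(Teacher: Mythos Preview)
This statement is presented in the paper as a \emph{conjecture} (in Section~8, ``Final Remarks and Open Questions''), not as a theorem; the paper gives no proof and explicitly flags it as an open problem motivated by the two-dimensional Theorem~\ref{thm:removable}. There is therefore no paper proof to compare your proposal against.

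Your outline is a reasonable research plan and correctly isolates the main obstruction, but it is not a proof, and there are two genuine gaps beyond the one you already flag.

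First, your assertion that ``the global flatness hypothesis of the conjecture is scale-invariant and passes to the limit'' is not correct. The hypothesis $F(u)\subset\{|x_1|\le\e\}$ holds at unit scale; after rescaling by $r_k$ it becomes $F(v_k)\subset\{|x_1|\le \e/r_k\}$ in $\B_{1/r_k}\setminus\CC_{\d_k/r_k}$, which is vacuous as $r_k\to 0$. So the compactness argument does not by itself force $u_\infty=x_1^+$. What you would actually need is a three-dimensional analogue of Theorem~\ref{keyprop} classifying all entire limits arising this way, and that classification is itself open: the paper poses exactly this kind of question (whether an entire classical solution in $\R^3$ with contractible positive and zero phase must be a half-space solution) as an open problem in Section~8. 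The contractibility hypothesis is scale-invariant, but using it requires the unknown classification.

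Second, as you yourself say, the rotation-control step---the three-dimensional replacement for Theorem~\ref{thm:effremov-weak} and in particular for the uniform flux estimate of Lemma~\ref{lem:semicircle}---is the heart of the matter, and you have no substitute for the conformal-mapping argument. Your suggestion to linearize as $u=x_1^++w$ and seek a Carleson or harmonic-measure bound on the tangential components of $\nabla w$ along loops around $\CC_\d$ is a plausible direction, but it is a program, not an argument. The authors evidently regard precisely this step as the obstruction, which is why the statement is recorded as a conjecture rather than a theorem.
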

\noi In the case of the product solution associated to the double hairpin removing the cylinder disconnects the positive phase. The conjecture is intended to apply separately to each half of the solution --- with the positive phase in the other half replaced by zero.

The second way to formulate a conjecture about flat solutions is
to consider solutions in a ball minus a small concentric ball. In
that case, the solutions should resemble a half space or one half 
of the solution of Liu {\em et al.} 

\begin{conjecture}  
Suppose that $u$ is a classical solution in $\B_1 \setminus \B_\d$,
whose positive phase and zero phase are contractible. 
There are absolute constants $\d_0>0$, $\e>0$ and $C$ 
such that if $\d\le \d_0$, and the free boundary is flat, i.~e., 
for every $x= (x_1,x_2,x_3)\in \B_1 \setminus \B_\d$, 
\[
 x_3 > \e \implies u(x) >0 \quad \mbox{and} \ x_3 < -\e \implies u(x) = 0,
\]
then 
\[
F(u) \cap \B_{1/C} \cap \{|x| > C\d\} 
= \{(x \in B_{1/C}: x_3  = g(x_1, x_2), \ |x| > C\d\}
\]
for some function $g:\R^2 \to \R$ with $|\nabla g | \le 1/100$.  
\end{conjecture}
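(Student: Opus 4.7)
The plan is to implement, in three dimensions, the three-stage strategy that underlies the 2D proof of Theorem \ref{thm:removable}: (i) establish uniform Lipschitz and non-degeneracy estimates; (ii) upgrade the absolute flatness hypothesis to scale-invariant flatness at every dyadic scale $r\in(C\d,1/C]$; (iii) convert scale-invariant flatness into a single Lipschitz graph via a rotation bound for the flat direction that is uniform in the hole size $\d$. The contractibility assumptions play the role of assumption (A) in the 2D theorem: they prevent the free boundary from wrapping around the cavity $\B_\d$ and thereby guarantee that, after Stage (i), any blow-up or blow-down of the solution is topologically simple.

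For Stage (i), the universal Lipschitz bound $|\nabla u|\le C$ on $\B_{1/2}\setminus \B_{2\d}$ follows verbatim from the Harnack-chain proof of Lemma \ref{lem:Lipschitz2}, which is dimension-free. Non-degeneracy $\max_{\de \B_r} u \ge cr$ is obtained by using the 3D Newton kernel $G_r(x)=(4\pi)^{-1}(|x|^{-1}-r^{-1})$ in Green's identity on $(\B_r\setminus \B_{2\d})^+(u)$; this reproduces Lemma \ref{lem:non-degeneracy2} and is in fact cleaner than the 2D version, since the nuisance factor $\log(1/\d)$ coming from the planar Green's function disappears in 3D.

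For Stage (ii), the key input is iterated flatness improvement, starting from Alt-Caffarelli flat-implies-smooth \cite{AC}. Choosing the absolute constant $\e$ smaller than the flat-implies-smooth threshold, the hypothesis makes $F(u)$ relatively $2\e$-flat in $\B_1$. Flatness improvement then produces $(2\e\eta)$-flatness in $\B_{1/2}$ for some absolute $\eta<1$, after rotating the flat direction by an angle bounded by $C'\e$. Iterating at a free-boundary point $x_0$ with $|x_0|>2C\d$ and halving the scale at each step, the iteration proceeds so long as the current ball lies compactly in $\B_1\setminus \B_\d$, that is, down to scale comparable to $|x_0|-\d$. At scale $2^{-k}$ the relative flatness has decayed to $2\e\eta^k$ and the accumulated rotation of the flat direction is at most $C'\e\sum_{k\ge 0}\eta^k = C''\e$, a bound independent of $\d$. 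Standard Alt-Caffarelli regularity then gives a smooth local graph representation of $F(u)$ near $x_0$ with normal within angular distance $C''\e$ of $e_3$.

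For Stage (iii), since the local graph normal is within $C''\e$ of $e_3$ at every free-boundary point with $|x_0|>C\d$, the local graphs knit together into a single global Lipschitz graph $x_3=g(x_1,x_2)$ on the projection of $F(u)\cap \B_{1/C}\cap\{|x|>C\d\}$, with $|\nabla g|\le \tan(C''\e) \le 1/100$ provided $\e$ is chosen small enough. The hard part will be controlling the flatness-improvement iteration at scales comparable to $\d$, where the standard scheme breaks down because the iteration ball intersects $\B_\d$ and the hole contributes a perturbation of size $O(\d/r)$ to the linearization. In 2D this obstacle was circumvented by reducing everything, via conformal mapping, to a \emph{linear} boundary-value problem on a half-annulus, where the uniform flux estimate of Lemma \ref{lem:semicircle} supplied the required $\d$-independent oscillation bound on the log-derivative. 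In 3D no such conformal reduction is available, and the missing ingredient is a direct quantitative estimate of the following type: if $h$ is a harmonic function on the half-annulus $\{x\in \R^3:\ x_3>0,\ C\d<|x|<1/C\}$ vanishing on $\{x_3=0\}$ and bounded by $\e$ on the two spherical boundaries, then $\int|\partial_{x_3}h|\,dx_1\,dx_2$ across the planar face is bounded by $C\e$ uniformly in $\d$. This capacity-type bound is in fact more favorable in 3D than in 2D because the Newtonian capacity of a small ball scales linearly (not inverse-logarithmically) in its radius, and we expect it can be furnished by an explicit axisymmetric barrier. Constructing this barrier and coupling it cleanly to the nonlinear free boundary regularity through the improvement iteration is the substantive technical content of a complete proof of the conjecture.
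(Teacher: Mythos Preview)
The statement you are addressing is one of the paper's \emph{open conjectures}; the authors offer no proof, so there is nothing in the paper to compare your argument against. What you have written is a strategy sketch, and you yourself flag its final paragraph as ``the substantive technical content of a complete proof.'' Two points deserve to be made sharper.

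First, Stage~(ii) as written does not do what you claim. You say you iterate flatness improvement at a point $x_0\in F(u)$ ``so long as the current ball lies compactly in $\B_1\setminus\B_\d$, that is, down to scale comparable to $|x_0|-\d$,'' starting from unit scale with relative flatness $2\e$. But at unit scale the ball $B_1(x_0)$ contains the hole $\B_\d$, so the Alt--Caffarelli improvement step is unavailable there. If instead you start at the largest hole-avoiding scale $r_0=|x_0|-\d$, the relative flatness is $2\e/r_0$, which exceeds the improvement threshold once $|x_0|$ drops below an absolute constant $c_0\sim\e/\e_0$. Thus your Stage~(ii) yields the Lipschitz graph only in the fixed shell $\{c_0<|x|<1/C\}$; the entire range $C\d<|x|<c_0$ is untouched. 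That range is exactly where the conjecture has content, and it is not merely ``scales comparable to $\d$'' as Stage~(iii) suggests.

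Second, your proposed 3D half-annulus flux estimate is correct (and, as you note, the dipole decay makes it strictly easier than Lemma~\ref{lem:semicircle}), but the paper uses the 2D version by applying it to $h=\log|\Phi'|$, where $\Phi$ is a conformal map that \emph{exactly linearizes} the free boundary condition $|\nabla u|=1$. You have not identified what harmonic function on the half-annulus should play the role of $h$ in three dimensions, nor why bounding its flux controls the turning of the free-boundary normal. Presumably this must go through the De~Silva linearization inside a modified improvement-of-flatness step that tolerates the hole, but that coupling---not the linear barrier---is the real obstacle, and your proposal does not address it.
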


Free boundaries and minimal surfaces arise as limits of elliptic semilinear
equations of the form 
\begin{equation}\label{eq:semilinear}
\Delta u = f(u).
\end{equation}
Suppose that $f\in C_0^\infty(\R)$ with the normalization
$\int_{-\infty}^\infty f(u) \, du = a>0$.  
If $u$ is an entire solution to \eqref{eq:semilinear}, and
the limit $\disp U(x) = \lim_{\e \to 0^+} \e \, u(x/\e)$ exists, then, formally, it satisfies
\[
\Delta U = a\delta(U),
\]
with $\delta$ equal to the Dirac delta function.  The correct interpretation
of this formal equation (see \cite{CafSalsa}) is that $\Delta U = 0$ on the sets
$\{U>0\}$ and $\{U<0\}$ and on the interface $U$ satisfies
the overdetermined boundary condition
\[
|\nabla U^+|^2 - |\nabla U^-|^2 = 2a \quad \mbox{on} \ \de \{U>0\}
\]
at least in some generalized sense.  (By $\nabla U^+$ we mean
value of the gradient at the free boundary $\de \{U>0\}$ computed
as a limit from the positive side; $\nabla U^-$ is the limit
from the complement of the positive side.) 
In other words, the blow down limit, if it exists, solves the 
two-phase free boundary problem (or in the case $U\ge 0$ and $U^-\equiv 0$, 
the one 
phase free boundary problem).   The one dimensional solutions
are rigid motions of $U(x) = \a x_1^+ - \b x_1^-$ with $\a>0$, $\b \ge 0$ and
$\a^2 - \b^2 = 2a$.  

\begin{conjecture}  \label{conj:onedimensional}
Suppose that $f\in C_0^\infty(\R)$ and $f \ge 0$.
Suppose that $u$ is a solution to $\Delta u = f(u)$ in 
$\R^n$ and there is a homeomorphism $\F:\R^n \to \R^n$ such
that $u(x)= U(\F(x))$ for some one dimensional solution $U$ to
the free boundary problem.  Then $u$ is a one-dimensional
solution, i.~e.,  after rigid motion, $u(x) = V(x_1)$ with $V''(s) = f(V(s))$,
at least for  $n\le 3$.
\end{conjecture}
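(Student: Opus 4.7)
The plan is to pass from the purely topological hypothesis to an analytic stability statement for $u$, and then invoke the De Giorgi-type rigidity of Ghoussoub--Gui ($n=2$) and Ambrosio--Cabr\'e ($n=3$) for stable solutions to semilinear equations in low dimensions.

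First I would show the nodal set $\Sigma:=\{u=0\}$ is a smooth embedded hypersurface. Since $u\in C^\infty$ by standard elliptic regularity, it suffices to rule out critical zeros. If $p\in\Sigma$ with $\nabla u(p)=0$, the lowest-order nonzero homogeneous Taylor polynomial $P_k$ of degree $k\geq 2$ must be harmonic (up to a fixed quadratic contribution from $f(0)$) by matching orders in $\Delta u=f(u)$. The zero set of such a polynomial of degree $\geq 2$ branches at the origin into at least two analytic sheets, contradicting the fact that the homeomorphism $\Phi$ sends a neighbourhood of $p$ in $\Sigma$ onto an open subset of a hyperplane. Hence $\nabla u\neq 0$ on $\Sigma$, so $\Sigma$ is a smooth hypersurface separating $\R^n$ into exactly two smooth domains $\{u>0\}$ and $\{u<0\}$.

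Second, I would analyze blow-downs $u_R(x):=R^{-1}u(Rx)$ as $R\to\infty$. Because $f$ is compactly supported, $u$ is harmonic where $|u|$ is large, and barrier arguments together with the one-phase structure of $U$ transplanted through $\Phi$ supply uniform Lipschitz and non-degeneracy bounds on $u_R$. Proposition~\ref{prop_limitofsolns} and the Weiss monotonicity formula then identify any subsequential limit $U_\infty$ as a variational (and viscosity) solution of the one- or two-phase free boundary problem, and topological simplicity of $\Sigma$ passes to the limit, forcing $U_\infty$ to have simply connected positive and (where present) negative phases. By Traizet's classification of entire classical free boundary solutions with finite topology in 2D, $U_\infty$ is a planar one-plane or two-plane profile; in 3D one appeals instead to the analogous conjectural classification (the half-space, vertical catenoid, and horizontal catenoid being the only known candidates, the first being the relevant one here). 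This yields quantitative asymptotic flatness of $\Sigma$.

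Third, with flatness at infinity in hand I would run a sliding argument of Berestycki--Caffarelli--Nirenberg type along the asymptotic normal direction $e$ to $\Sigma$. The directional derivative $\psi:=e\cdot\nabla u$ solves the linearized equation $L\psi=0$ with $L:=-\Delta+f'(u)$, and sliding the translates $u(x+te)$ from $t=+\infty$ downward shows, via the strong maximum principle applied to $L$, that first contact can occur only at infinity, whence $\psi>0$ on all of $\R^n$. The existence of a positive solution of $L\psi=0$ forces $L\geq 0$ as a quadratic form on $C_c^\infty(\R^n)$, i.e., $u$ is stable. In dimensions $n\leq 3$, stable bounded-Morse solutions of $\Delta u=f(u)$ are one-dimensional by Ghoussoub--Gui ($n=2$) or Ambrosio--Cabr\'e ($n=3$), which completes the proof. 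The main obstacle throughout is the second step: the homeomorphism $\Phi$ carries no quantitative information, so extracting uniform regularity and correctly identifying the blow-down requires genuinely combining subharmonicity from $f\geq 0$, non-degenerate growth inherited from $U$, and the classification of limiting free boundaries; the restriction $n\leq 3$ enters both through the range of validity of the De Giorgi rigidity and through the availability of Traizet-type classification, which in 3D remains a conjecture.
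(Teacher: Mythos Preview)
The statement you are attempting to prove is Conjecture~\ref{conj:onedimensional}, which the paper states as an \emph{open problem} in the final section and does not prove. There is therefore no proof in the paper to compare your proposal against; any argument you supply would be new mathematics, not a reconstruction of something the authors did.

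That said, your proposal has genuine gaps and does not resolve the conjecture. First, you yourself acknowledge that Step~2 in dimension $n=3$ rests on ``the analogous conjectural classification'' of entire free boundary solutions; you cannot prove a conjecture by invoking another. Second, and more fundamentally, the rigidity theorems of Ghoussoub--Gui and Ambrosio--Cabr\'e that you cite in Step~3 concern \emph{bounded} solutions of semilinear equations (the setting of the De~Giorgi conjecture). Here the one-dimensional free boundary profiles $U$ are of the form $\alpha x_1^+ - \beta x_1^-$, hence unbounded, and since $u = U\circ\Phi$ with $\Phi$ a homeomorphism, $u$ has the same range and is likewise unbounded. The cited rigidity results simply do not apply in this regime, and there is no known substitute that works for unbounded solutions with $f\ge 0$ compactly supported. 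Finally, the sliding argument in Step~3 also presupposes enough a~priori control (decay or monotonicity at infinity) to start the slide from $t=+\infty$; the asymptotic flatness from Step~2, even if granted, gives information about the nodal set $\Sigma$, not about the full solution $u$ far from $\Sigma$, so the ``first contact only at infinity'' claim is not justified.
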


\begin{question}  Is there a local version of Conjecture \ref{conj:onedimensional}?
Suppose that $f\in C_0^\infty(\R)$, $f\ge 0$.  Does the fact that $u$ 
solving $\Delta u = f(u)$ on the unit ball $\B\subset \R^n$ (say for $n=2$ or $n=3$)
is homeomorphic to a one-dimensional solution imply that it is nearly 
isometric to a one-dimensional solution in a smaller ball? More precisely,
suppose 
\[
u(x) = V_1(\langle \F_1(x), a_1\rangle)
\]
for some $V_1$ solving $V_1''(s) = f(V_1(s))$, $s\in \R$, some 
unit vector $a_1$ and some homeomorphism $\F_1$ from the closed unit
ball to itself.  Does there exist $\d>0$ so that 
on $\B_\delta$, 
\[u(x) = V_2(\langle \F_2(x), a_2 \rangle)
\]
for some one dimensional solution $V_2$, some
unit vector $a_2$, and some mapping $\F_2$ that is an isometry 
up to distortion of 1 percent?  
\end{question}  

Lastly, we mention some questions related to the work of 
Colding and Minicozzi on the Calabi-Yau conjecture \cite{CM-CalabiYau}.
Their main theorem says that the only complete, embedded  minimal
surfaces in $\R^3$ with finite topology contained in a half space are
planes.  One analogous statement for free boundary problems
is a theorem of Alt and Caffarelli saying that a classical solution
entire solution to the one-phase free boundary problem in $\R^3$,
whose positive phase is contained in a half space must be a half space
solution.   A  statement of the same sort for solutions
to semilinear equations is as follows.

\begin{conjecture}  Suppose that $f\in C_0^\infty(\R)$ is such that $f\ge 0$.
Suppose that $u$ solves $\Delta u = f(u)$ in $\R^3$ and $u$ has
finitely many critical points.  If $u(x) >0$ implies
$x_1>0$, then $u$ is a one-dimensional solution in the sense of Conjecture \ref{conj:onedimensional}. 
\end{conjecture}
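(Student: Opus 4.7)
The plan is to combine a blow-down argument with the Alt--Caffarelli half-space rigidity for entire one-phase solutions in $\R^3$ (cited just above the conjecture), followed by a moving plane argument in the direction $e_1$ in which the finite critical set hypothesis plays the decisive role.

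First, since $f \ge 0$, the solution $u$ is subharmonic on $\R^3$, and because $f \in C_0^\infty(\R)$, $u$ is harmonic wherever $|u|$ is large. Combined with $u \le 0$ on $\{x_1 \le 0\}$, a Phragm\'en--Lindel\"of comparison with a linear barrier in $\{x_1 > 0\}$ yields $|u(x)| \le C(1 + |x|)$. Set $u_R(x) := R^{-1} u(Rx)$, so that $\D u_R = R f(R u_R)$ and $\{u_R\}$ is locally uniformly Lipschitz. Pass to a subsequence $u_{R_k} \to U$ locally uniformly. Since the transition layer $\{R u_{R_k} \in \mathrm{supp}\, f\}$ has thickness $O(1/R_k)$, the limit $U$ is harmonic off $F(U) := \de\{U > 0\} \cup \de\{U < 0\}$ and satisfies $|\del U^+|^2 - |\del U^-|^2 = 2a$ on $F(U)$ in the variational/viscosity sense of Section 2, with $a := \int_\R f$. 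The Weiss monotonicity formula makes $U$ homogeneous of degree one on $\R^3$, with positive phase a cone inside $\{x_1 \ge 0\}$. A separation-of-variables argument on the unit sphere (using that the first Dirichlet eigenvalue of the spherical Laplacian on any proper sub-hemisphere is strictly larger than the value forcing degree-one homogeneity) forces $\{U > 0\}$ to coincide with $\{x_1 > 0\}$, and a symmetric argument for $U^-$ yields $U = \a x_1^+ - \b x_1^-$ with $\a \ge 0,\ \b \ge 0,\ \a^2 - \b^2 = 2a$; non-degeneracy of $u$ at free boundary points escaping to infinity (available because $\{u > 0\}$ is unbounded and $u$ has only finitely many critical points) forces $\a > 0$.

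Next, with this asymptotic profile in hand, initialize a moving plane argument in the direction $e_1$: for $t$ large, compare $u$ with $u^t(x) := u(2 t e_1 - x)$ across $\{x_1 = t\}$. The asymptotic agreement with $\a x_1^+ - \b x_1^-$, combined with the maximum principle for the linear operator $\D - c_t$ where $c_t(x) := (f(u) - f(u^t))/(u - u^t)$ is bounded (since $f \in C_0^\infty(\R)$), gives $u^t \ge u$ in $\{x_1 > t\}$ for $t$ sufficiently large. Slide $t$ downward following Berestycki--Caffarelli--Nirenberg; the finite critical set hypothesis is what keeps the sliding from stalling at an interior tangency, since each such tangency contributes a distinct critical point of $u$. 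This yields $\de_{x_1} u > 0$ in $\{x_1 > t_0\}$ for some $t_0$, and the symmetric argument for $U^-$ gives the appropriate sign for $\de_{x_1} u$ in $\{x_1 < t_0'\}$. Setting $v := \de_{x_1} u$, we have $\D v = f'(u) v$ on $\R^3$, $v > 0$ off a compact set, and $\{v = 0\} \subset \{\del u = 0\}$ is finite; the strong maximum principle and Hopf lemma applied around each isolated zero promote $v > 0$ to all of $\R^3$. Every level set $\{u = c\}$ is thus a smooth entire graph over $\{x_1 = 0\}$, asymptotic at infinity to a single plane determined by the one-dimensional blow-down profile, and a Bernstein-type rigidity for such graphs forces each level set to be that plane; therefore $u = V(x_1)$ for some $V \in C^\infty(\R)$, and the PDE reduces to $V'' = f(V)$.

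The hardest step will be the moving plane argument. The operator $\D - c_t(x)$ does not satisfy a maximum principle on the full half-space $\{x_1 > t\}$ because $f'$ can change sign, and $c_t$ is supported on the transition layer $\{u \in \mathrm{supp}\, f\}$, which extends laterally across $\R^2$ so that narrow-domain principles do not apply directly. The finite critical set hypothesis is what we expect to save the argument, by constraining the transition layer to be asymptotically a single planar slab and allowing a barrier built from the blow-down profile to control the linearized operator. Quantifying this control --- in the spirit of the flatness and stability techniques of Ghoussoub--Gui, Ambrosio--Cabr\'e and Savin for the De Giorgi conjecture --- is the principal technical difficulty, and it is precisely the dimensional restriction $n \le 3$ in the analogous Conjecture \ref{conj:onedimensional} that suggests the argument should succeed in the setting of the present conjecture.
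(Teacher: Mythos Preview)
The statement you are attempting to prove is presented in the paper as an open \emph{conjecture}, not a theorem; the paper offers no proof, only motivation by analogy with the Colding--Minicozzi work on the Calabi--Yau conjecture and with the half-space rigidity results cited immediately afterward. There is therefore no paper proof to compare against.

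As for your proposal itself, it is a plausible outline of how one might hope the argument would go, and you correctly flag at the end that the moving plane step is where the real work lies. But two concrete errors should be named. First, you assert that ``each such tangency contributes a distinct critical point of $u$''. An interior tangency for $w := u - u^t$ is a point where $w = 0$ and $\nabla w = 0$; this relates $\nabla u$ at $x$ to $\nabla u$ at the reflected point, but it does not force $\nabla u(x) = 0$. The finite critical set hypothesis therefore does not, by this mechanism, prevent the sliding from stalling. Second, the containment $\{v = 0\} \subset \{\nabla u = 0\}$ with $v = \de_{x_1} u$ is simply false: the vanishing of one partial derivative does not force the full gradient to vanish. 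Without this, your strong maximum principle argument upgrading $v > 0$ from the complement of a compact set to all of $\R^3$ breaks down.

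More broadly, you yourself identify the principal obstruction: the linearized operator $\Delta - c_t$ need not satisfy a maximum principle on the half-space $\{x_1 > t\}$ because $f'$ can change sign and the transition layer is unbounded in the tangential directions. Saying that the dimensional restriction $n \le 3$ ``suggests the argument should succeed'' is not a proof; this is precisely the gap that keeps the statement a conjecture, and your sketch does not close it.
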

\noi Similar half-space rigidity results were obtained in \cite{farina2010flattening, RosSic, wang2015serrin, RosRuizSic2017rigidity} in the context of the Berestycki-Caffarelli-Nirenberg Conjecture \cite{BCN}, and in \cite{LWW-AC-HalfSpace} for the Allen-Cahn equation in low dimension.

One of the main steps in the proof by Colding and Minicozzi of
the Calabi-Yau conjecture is the proof that embedded, complete minimal
surfaces with finite topology in $\R^3$ are properly embedded.  The
properness means that on every bounded subset, the intrinsic distance
on the surface is comparable to the straight line distance.   
Part of
the finite topology hypothesis is the assumption that the manifold has 
only finitely many ends.   Indeed, the comparability fails to
hold uniformly for the family of  helicoids.  Although there
is no family that corresponds to helicoids in the free boundary setting,
the double hairpin family in dimension 2 and the product of a line with a double
hairpin in dimension 3 also fails to satisfy the comparability of intrinsic
distance with straight line distance uniformly.  In both of these
examples the blow down limit is the wedge solution $W_1$, which
has disconnected positive phase.   We expect that this is the
only type of solution for which intrinsic and extrinsic distance are
not comparable.   

To make a concrete conjecture, we 
change our focus from the free boundary to its positive
phase.  Our statement says roughly that
the purely topological property of connectivity of the positive phase at infinity 
implies a uniform, quantitative connectivity of the positive positive phase 
at all scales.

\begin{conjecture}  Suppose that $f\in C_0^\infty(\R)$ is such that $f\ge 0$
and $f(s) = 0$ for all $s\le 0$.  Suppose that $u$ solves $\Delta u = f(u)$ in $\R^3$, and $u$ has a blow down limit $\disp U(x) = \lim_{\e\to 0^+} \e \, u(x/\e)$ whose  positive phase $\{x\in \R^3: U(x) >0\}$ is connected.  Then the minimum
distance between points by paths in $\{x\in \R^3: u(x)>0\}$ is comparable 
to the straight line distance between the points with a constant depending
only on $U$ or perhaps even an absolute constant. 
\end{conjecture}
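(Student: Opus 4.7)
The plan is to argue by contradiction via a rescaling and compactness argument in the spirit of the proof of Theorem \ref{keyprop}. Suppose the conjecture fails, so there exist sequences $p_k, q_k \in \{u > 0\}$ with Euclidean separation $r_k := |p_k - q_k|$ and intrinsic path distance $d_k := d_{\{u>0\}}(p_k, q_k)$ satisfying $d_k / r_k \to \infty$. Since $u$ is smooth and its free boundary is regular away from a small singular set, intrinsic and extrinsic distances are automatically comparable on compact subsets of $\R^3$; thus, after passing to a subsequence, we may assume $r_k \to \infty$ and both $p_k, q_k$ leave every compact set. Rescale by setting $u_k(x) := r_k^{-1} u(p_k + r_k x)$, which satisfies $\Delta u_k = r_k f(r_k u_k)$. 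Gradient estimates for bounded solutions of the semilinear equation make $\{u_k\}$ precompact in the local uniform topology, and a subsequence converges to a Lipschitz limit $u_\infty$ on $\R^3$. Since $f$ has compact support and $r_k \to \infty$, the nonlinearity concentrates at the free boundary, and $u_\infty$ is a classical (or viscosity) solution of \eqref{FBP} in $\R^3$ by the convergence arguments in \cite{CafSalsa}.

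The points $0$ and $\hat q := \lim (q_k - p_k)/r_k \in \R^3$, $|\hat q| = 1$, both lie in $\overline{\{u_\infty > 0\}}$, yet $d_k/r_k \to \infty$ prevents them from being joined by any bounded-length path within $\{u_\infty > 0\}$. The core of the proof is to show that $u_\infty$ nevertheless has connected positive phase with uniformly comparable intrinsic/extrinsic distance, yielding a contradiction. To that end, identify the blow-down of $u_\infty$ with the prescribed blow-down $U$ of $u$ via a diagonal argument: for any auxiliary scale $R_j \to \infty$, the doubly rescaled functions $(r_k R_j)^{-1} u(p_k + r_k R_j y)$ converge to $R_j^{-1} u_\infty(R_j y)$ if $k \to \infty$ is taken first, and to $U$ (after absorbing the translation $p_k / (r_k R_j) \to 0$) if $R_j \to \infty$ is taken first; a standard interleaving along a diagonal subsequence identifies the two. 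Since $\{U > 0\}$ is connected by hypothesis, $u_\infty$ is asymptotic at infinity to a solution with connected positive phase, and then a flux-balance plus non-degeneracy argument in the spirit of Lemma \ref{lem:fluxbalance} (extended to three dimensions) propagates connectedness of $\{u_\infty > 0\}$ down to all intermediate scales. Interior regularity of $u_\infty$ then supplies a bounded path from $0$ to $\hat q$ in $\{u_\infty > 0\}$, completing the contradiction.

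The principal obstacle is the diagonal step identifying the blow-down of $u_\infty$ with $U$: it requires interchanging two distinct rescaling limits along the sequence. A concrete remedy is to select $p_k, q_k, r_k$ to maximize the connectivity deficiency at scale $r_k$ over an increasing window of base scales, endowing $u_\infty$ with enough canonical structure inherited from $U$ that the blow-down identification falls out. A secondary obstacle is ruling out that $\{u_\infty > 0\}$, although connected in the large, exhibits a product-type or axisymmetric pinching (analogous to the product of a double hairpin with $\R$, or to the axisymmetric solution of Liu--Wang--Wei \cite{LWW}) in which the intrinsic/extrinsic comparison survives only with a constant depending on $U$; the hypothesis that $U$ itself has connected positive phase is decisive here, since it excludes wedge-type blow-downs and therefore the catenary-like pinching mechanism present in the double hairpin. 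Upgrading the statement from a constant depending on $U$ to an absolute constant would further require a classification of entire three-dimensional classical solutions with connected blow-down, which is not currently available.
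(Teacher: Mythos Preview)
This statement is a \emph{conjecture} in the paper, placed in the closing section of open problems; the paper offers no proof and does not claim one. So there is nothing to compare your proposal against, and the honest assessment is whether your outline could be turned into an actual proof. As written it cannot: you yourself flag the two decisive gaps, and neither has a known resolution. The diagonal identification of the blow-down of $u_\infty$ with $U$ requires commuting the limit $k\to\infty$ (recentring at $p_k$ and rescaling by $r_k$) with the limit $R_j\to\infty$. This fails unless you control $|p_k|/r_k$; if $|p_k|/r_k\to\infty$ the recentred blow-down need not see $U$ at all, and your ``maximize the deficiency over a window'' remedy does not supply that control. Your appeal to a three-dimensional flux-balance argument ``in the spirit of Lemma \ref{lem:fluxbalance}'' to propagate connectedness is precisely the missing ingredient the conjecture is asking for---the two-dimensional flux balance relies on the fact that a curve separates the plane, and there is no analogue that constrains the topology of a surface in $\R^3$ in the same way.

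There is also a gap you did not flag. Even granting that $\{u_\infty>0\}$ is connected, your rescaled base points satisfy $u_k(0)=r_k^{-1}u(p_k)$ and $u_k((q_k-p_k)/r_k)=r_k^{-1}u(q_k)$, and nothing prevents these values from tending to $0$; then $0$ and $\hat q$ lie only in $\overline{\{u_\infty>0\}}$, possibly on $F(u_\infty)$, and connectedness of the open positive phase does not by itself produce a path of bounded length joining them (nor does it let you lift such a path back to $\{u_k>0\}$ with controlled length). Likewise, your claim that intrinsic and extrinsic distances are ``automatically comparable on compact subsets'' presupposes that $\{u>0\}$ is locally connected with uniformly bounded path-length ratio on each compact set, which is not obvious for the zero superlevel set of a smooth function without further information on $\partial\{u>0\}$. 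In short, your outline is a reasonable heuristic for why one might believe the conjecture, but the steps that would make it a proof are themselves open problems in three dimensions.
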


\bibliography{caff70_Bib}
\end{document}